\titleformat{\section}[block]{\normalfont\bfseries\filcenter}{\itshape\thesection}{1em}{}
\titleformat{\subsection}[block]{\normalfont\bfseries}{\itshape\thesubsection}{0.9em}{}
\titleformat{\subsubsection}[block]{\normalfont\bfseries}{\itshape\thesubsubsection}{0.8em}{}
\titleformat{\caption}[block]{\normalfont}{\itshape}{0.8em}{}
\DeclareRobustCommand{\EX}[2][{\mathbb{E}}]{\ensuremath {#1} \left[{#2}\right]}
\DeclareRobustCommand{\cEX}[3][{\mathbb{E}}]{\ensuremath {#1}\left[ {#2} \big| {#3} \right]}
\DeclareRobustCommand{\one}[1]{\ensuremath \mathbf{1}_{\{#1\}}}
\DeclareRobustCommand{\F}{\ensuremath \mathcal{F}}
\DeclareRobustCommand{\RR}{\ensuremath \mathbb{R}}
\DeclareRobustCommand{\NN}{\ensuremath \mathbb{N}}
\DeclareRobustCommand{\EE}{\ensuremath \mathbb{E}}
\newcommand{\ApproxEx}[2][T]{\mathbb{E}_{\mathbb{Q}_{#1}} \left[{#2}\right]}
\newcommand{\ApproxVarX}[1]{\hat{\ensuremath{#1}}}
\newcommand{\ApproxVarT}[1]{\hat{\ensuremath{#1}}^{\pi,1}}
\newcommand{\ApproxTwoVarT}[1]{\hat{\ensuremath{#1}}^{\pi,2}}
\newcommand{\PredictorVarT}[1]{\tilde{\ensuremath{#1}}}
\newcommand{\mc}[1]{\ensuremath{\mathcal{#1}}}
\newcommand{\LB}{\textbf{(LB) }}
\newcommand{\SB}{\textbf{(SB) }}
\newcommand{\bV}{\bar{V}}
\newcommand{\bX}{\bar{X}}
\newcommand{\bY}{\bar{Y}}
\newcommand{\bZ}{\bar{Z}}
\newcommand{\bu}{\bar{u}}
\newcommand{\bTheta}{\bar{\Theta}}
\newcommand{\tu}{\tilde{u}}
\newcommand{\tX}{\tilde{X}}
\newcommand{\hX}{\hat{X}}
\newcommand{\hmu}{\hat{\mu}}
\newcommand{\hu}{\hat{u}}
\newcommand{\hv}{\hat{v}}
\newcommand{\hzeta}{\hat{\zeta}}
\newcommand{\hTheta}{\hat{\Theta}}
\newcommand{\E}{\mathbb{E}}
\newcommand{\R}{\mathbb{R}}
\newcommand{\Q}{\mathbb{Q}}
\newcommand{\mL}{\mathcal{L}}
\newcommand{\mE}{\mathcal{E}}
\newcommand{\mF}{\mathcal{F}}
\newcommand{\p}{\partial}
\renewcommand{\mE}{\mathcal{E}}
\renewcommand{\p}{\partial}
\renewcommand{\t}{\underline{t}}
\newcommand{\s}{\underline{s}}
\newcommand{\la}{\langle}
\newcommand{\ra}{\rangle}
\numberwithin{equation}{section}
\newtheorem{theoreme}{Theorem}[section]
\newtheorem{corollaire}[theoreme]{Corollary}
\newtheorem{lemme}[theoreme]{Lemma}
\newtheorem{definition}[theoreme]{Definition}
\newtheorem*{remarque}{Remark}
\newtheorem{claim}[theoreme]{Claim}
\begin{document}

\title[A cubature based algorithm to solve MKV-FBSDE]{A cubature based algorithm to solve decoupled McKean-Vlasov Forward Backward Stochastic Differential Equations}

\author{P.E. Chaudru de Raynal}
\address{UNIVERSITE NICE SOPHIA ANTIPOLIS, LABORATOIRE JEAN-ALEXANDRE DIEUDONNE\\ PARC VALROSE, 06108 CEDEX 02, NICE, FRANCE}
\email[P.E. Chaudru de Raynal]{deraynal@unice.fr}

\author{C.A. Garcia Trillos}
\email[C.A. Garcia Trillos]{camilo@unice.fr}
\keywords {Cubature; McKean-Vlasov processes; BSDE; mean field games; non-local PDE}
\begin{abstract}
We propose a new algorithm to approach weakly the solution of a McKean-Vlasov SDE. Based on the cubature method of Lyons and Victoir \cite{lyons_cubature_2004}, the algorithm is deterministic differing from the the usual methods based on interacting particles. It can be parametrized in order to obtain a given order of convergence. 

Then, we construct implementable algorithms to solve decoupled Forward Backward Stochastic Differential equations (FBSDE) of McKean-Vlasov type, which appear in some stochastic control problems in a mean field environment. We give two algorithms and show that they have convergence of order one and two under appropriate regularity conditions.\\

\footnotesize{This is a reprint version of an article published in \href{http://www.sciencedirect.com/science/article/pii/S0304414914003044?via\%3Dihub}{Stochastic Processes and their Applications 125 (2015) 2206–2255, doi:10.1016/j.spa.2014.11.018}}
\end{abstract}

\maketitle

\section{Introduction}
We call decoupled McKean-Vlasov forward backward stochastic differential equation (MKV-FBSDE) the following FBSDE system:
\begin{equation}\label{NLFBSDE}
\left\lbrace\begin{array}{ll}
dX^x_t=\sum_{i=0}^dV_i(t,X^x_t,\E \varphi_i(X^x_t))  dB^i_t\\
dY^x_{t}  = -f(t,X^x_t,Y^x_t,Z^x_t,\E \varphi_f(X^x_t,Y^x_t))dt + Z^x_t dB_t^{1:d}\\
X^x_0 = x,\quad Y^x_{T}  = \phi(X^x_T)
\end{array}
\right.
\end{equation}
for any $t$ in $[0,T]$, $T>0$ be given. We place ourselves in a filtered probability space $(\Omega, \mathcal{F}, \mathbb{P}, (\mathcal{F}_t)_{ t \geq 0})$, with $B_t^{1:d}$ a $d$-dimensional adapted Brownian motion and $B_t^0=t$. We take $ V_i: (t,y,w)\in  [0,T]\times\R^d \times \R \mapsto V_i(t,y,w)$; functions $ \varphi_i: y\in \R^d \mapsto \varphi_i(y)\in \R,\ i=0,\cdots,d$ and $\varphi_f: (y, y') \in \R^d \times \R \mapsto \varphi_f(y,y')$ and the mapping $f: t,y,y',z,w \in [0,T]\times \R^d \times \R \times \R^d \times \R \mapsto f(t,y,y',z,w) \in \R$ to be infinitely differentiable with bounded derivatives. The mapping $\phi$ is an at least Lipschitz function from $\R^d$ to $\R$ whose precise regularity is given below.\\

McKean Vlasov processes may be regarded as a limit approximation for interacting systems with large number of particles. They appeared initially in statistical mechanics, but are now used in many fields because of the wide range of applications requiring large populations interactions. For example, they are used in finance, as factor stochastic volatility models \cite{bergomi_smile_2009} or uncertain volatility models \cite{guyon_smile_2011}; in economics, in the theory of ``mean field games'' recently developed by J.M. Lasry and P.L. Lions in a series of papers \cite{lasry_jeux_2006,lasry_jeux_2006-1,lasry_large_2007,lasry_mean_2007} (see also \cite{carmona_control_2012, carmona_probabilistic_2012, carmona_mean_2012} for the probabilistic counterpart) and also in physics, neuroscience, biology, etc. In section \ref{Sec:ControlMeanField}, we present a class of control problems in which equation \eqref{NLFBSDE} explicitly appears. \\

The note of Sznitman \cite{sznitman_topics_1991} gives a complete overview on the topic of systems with a large number of particles. A proof of the existence and uniqueness of the solution of a MKV-FBSDE system related but different to the one of our setup is found in \cite{buckdahn_mean-field_2009}. These existence and uniqueness results are easily extended to \eqref{NLFBSDE}.\\

\textbf{A cubature algorithm for MKV-FBSDE processes.} Cubature on Wiener space was introduced in 2004 by T.Lyons and N.Victoir \cite{lyons_cubature_2004}, following the earlier work of S. Kusuoka \cite{kusuoka_approximation_2001}.  Ever since, the cubature method has been used to solve the problem of calculating Greeks in finance \cite{teichmann_calculating_2006}, non-linear filtering problems \cite{crisan_convergence_2007}, stochastic partial differential equations \cite{bayer_cubature_2008}, \cite{doersek_cubature_2012} and backward stochastic differential equations \cite{crisan_second_2010,crisan_solving_2010}.\\ 
The main idea of the cubature method consists in replacing the Brownian motion by choosing randomly a path among an a priori (finite) set\footnote{Explicit examples of such functions are given in \cite{lyons_cubature_2004}. We put back this discussion to a next subsection.} of continuous functions from $[0,T]$ to $\R^d$ with bounded variations such that the expectation of the iterated integrals against both the Brownian and such paths are the same, up to a given order $m$. Hence, the SDE is replaced by a system of weighted ODEs.\\ 

 We give the main idea to construct a cubature based approximation scheme for \eqref{NLFBSDE}. The main issue in the case of a MKV-FBSDE is the McKean-Vlasov dependence that appears in the coefficients. This dependence breaks the Markov property (considered only on $\R^d$) of the process so that it is not possible to apply, a priori, many classical analysis tools. In order to handle this problem, the idea consists in taking benefit on the following observation: \emph{given the law of the solution of the system, \eqref{NLFBSDE} is a classical time inhomogeneous FBSDE} (the law just acts as a time dependent parameter).\\

Let $(\eta_t)_{0\leq t \leq T}$ be a family  of probability measures on $\R^d$, and let us fix the law in the McKean-Vlasov terms of \eqref{NLFBSDE} to be $(\eta_t)_{0\leq t \leq T}$ . For this modified system, we may apply a classical cubature FBSDE scheme for the forward component (the time dependence of the coefficients being handled as an additional dimension). The trick consists in taking advantage of the decoupled setting:  we first build a cubature tree (depending on the order of the cubature) and then go back along the nodes of the tree by computing the current value of the backward process as a conditional expectation at each node. We refer to \cite{crisan_solving_2010} or \cite{crisan_second_2010} for a detailed description of such algorithm.\\
 
Obviously, at each step of the scheme, we pay the price of using an arbitrary probability measure as parameter for the coefficients instead of the law of the process. Therefore this law has to be chosen carefully in order to keep a good control on the error and achieve convergence. An example of a ``good choice'' is to take at each step of the cubature tree the discrete marginal law given by the solution of the ODEs along the cubature paths and corresponding weights. We show that for a cubature of order $m$ and a number $N$ of discretization steps, this choice of approximation law leads to a $N^{-(m-1)/2}$ order approximation of the expectation of any $m+2$ times continuously differentiable functional of the forward component, when all the derivatives are bounded\footnote{this is the special case when $\phi$ is $m+2$ times continuously differentiable with bounded derivatives and $f=0$ in \eqref{NLFBSDE}.}, and to a first order approximation scheme of the backward component, where the given orders  stand for 
the supremum norm error. Higher orders of approximation are also obtained by correcting some terms in the algorithm.\\

As it is pointed out in \cite{lyons_cubature_2004} and \cite{crisan_second_2010}, the regularity of the terminal condition $\phi$ in \eqref{NLFBSDE} may be relaxed to Lipschitz and the approximation convergence rate preserved, provided that the vector fields are uniformly non-degenerate (in fact, the condition in the given references is weaker, since the vector fields are supposed to satisfy an UFG condition, see \cite{kusuoka_applications_1987}). This relies on the regularization properties of parabolic and semi-linear parabolic PDEs (see \cite{friedman_partial_2008}  for an overview in the elliptic case and respectively \cite{kusuoka_applications_1987} and \cite{crisan_sharp_2012} for the UFG case). We show that this remains true in the McKean-Vlasov case and that the convergence rate still holds when the function $\phi$ is Lipschitz only and when the vector fields are uniformly elliptic. \\

Usually, forward MKV-SDEs are solved by using particle algorithms (see for example \cite{antonelli_rate_2002,Talay_stochastic_2003} or \cite{bossy_stochastic_2005} and references therein) in which the McKean term is approached with the empirical measure of a large number of interacting particles with independent noise. Adapting such algorithms to the forward-backward problem is not obvious as the high dimension of the involved Brownian motion (given by the number of particles) induces, a priori, a high dimension backward problem with the obvious consequences for the numerical implementation. In comparison, our proposed algorithm gives a deterministic approximation of the McKean term, and since it does not induce any additional noise, it does not increase the dimension of the backward problem.

Although our algorithm works for decoupled MKV-FBSDEs, we believe this solver may also be considered as a building block if one is interested in approaching the fully coupled case (when the forward coefficients depend on the backward variable), for example via fixed point procedures. Nevertheless, a lot of work is required to define the precise conditions and setup in which such algorithm would converge to the (or at least a) solution of the fully coupled problem.\\

\textbf{The conditional system.} Let us shortly develop what we mean with the sentence ``\emph{given the law of the solution of the system, \eqref{NLFBSDE} is a classical time inhomogeneous FBSDE}".\\
Working with a non-linear problem, such as MKV-FBSDE, could be tricky. In our case, the main object to work with is the \emph{conditional system}. This is the formulation that allows to get rid of the dependence on the law and to replace it by a time dependent parameter.\\

Following the same line of arguments presented in Buckdahn et al. \cite{buckdahn_mean-field_2009}, it is well seen that there exists a unique solution $\{X_t^x,Y_t^x\}_{t\geq 0}$ to the system \eqref{NLFBSDE}. In a Markovian setting, the law of this couple is entirely determined by the law $\mu=(\mu_t)_{0\le t\leq T}$ of the forward process $(X_t)_{0\leq t\leq T}$ and a given deterministic function $u:[0,T]\times \R^d \to \R$. In our case, one can show that this remains true (see Section \ref{Sec:MathTools} below for a proof) so that there exists a deterministic $u(t,y)$ such that for all $t$,
\begin{equation} \label{Eq:DefU}Y_t = u(t,X_t).\end{equation}
We prove that under appropriate assumptions $u$ is regular and satisfies the parametrized non-local semi linear PDE:
\begin{equation}\label{Eq:NonLinearPDE}
\left\lbrace
\begin{array}{ll}
\p_t u(t,y)  + \mL^{\mu} u(t,y) =f\left(t,y,u(t,y), (\mc{V}^{\mu} u(t,y))^T,\la\mu_t,\varphi_f(\cdot,u(t,\cdot))\ra\right)\\
 u(T,y)=\phi(y)
\end{array}
\right.,
\end{equation}
where $\mc{V}^{\mu} u$ stands for the row vector $(\nabla u \cdot V_1 ,\ldots,\nabla u \cdot V_d )$,  $(\mc{V}^\mu u)^T$ is the transpose of $\mc{V}^\mu u$ and $\mL^{\mu}$ is the generator of the forward component in \eqref{cFBSDE} below and given by:
\begin{equation}\label{defmL}
\mL^{\mu} := V_0(\cdot,\cdot,\la\mu_\cdot,\varphi_0\ra)\cdot D_y +\frac{1}{2}{\rm Tr}[VV^T(\cdot,\cdot,\la\mu_\cdot,\varphi_i\ra)D_y^2].
\end{equation}
Here, we used the duality notation $\la\mu,\varphi_i\ra$ for $\int \varphi_i d\mu$. Likewise, the superscript $\mu$ means that the vector fields are taken at the point $(\cdot,\cdot,\la\mu_{\cdot},\varphi_{i}\ra)$ (where the $i\in\{0,\cdots,d\}$ is taken with respect to the corresponding vector field), $V$ is the matrix $[V_1,\cdots,V_d]$,  $``\cdot$'' stands for the euclidean scalar product on $\R^d$ and ``${\rm Tr}$'' for the trace.\\

The conditional MKV-FBSDE system is then defined as
\begin{equation}\label{cFBSDE}
\left\lbrace\begin{array}{ll}
dX_s^{t,y,\mu}=\sum_{i=0}^dV_i(s,X_s^{t,y,\mu}, \la\mu_s, \varphi_i\ra) dB^i_s\\
dY_{s}^{t,y,\mu}  = -f(s,X_s^{t,y,\mu},Y_s^{t,y,\mu},Z_s^{t,y,\mu},\la\mu_s, \varphi_f(\cdot,u(s,\cdot))\ra)ds + Z_s^{t,y} dB_s^{1:d}\\
X_t^{t,y,\mu} = y,\quad Y_{T}^{t,y,\mu}  = \phi(X^{t,y,\mu}_T).
\end{array}
\right.
\end{equation}

Let us remark that in this setting, we can also define the deterministic mapping $v:[0,T]\times \R^d \to \R$ such that
\begin{equation} \label{Eq:DefV}Z_t = v(t,X_t).\end{equation}
As in the classical BSDE theory, under appropriate regularity conditions,
\[v(t,x) = (\mc{V}^{\mu}u(t,x))^T.\]

\textbf{Assumptions}. As the reader might guess from the previous discussion, the error analysis of the proposed algorithm uses extensively the regularity of $(t,x)\in [0,T]\times \R^d \mapsto \E \phi(X^{t,x,\mu}_T) $ for the forward part and of the solution $u$ of \eqref{Eq:NonLinearPDE} for the backward part. Therefore, we present two types of hypotheses that guarantee that the required regularity is attained. 

The first option we present is to require smoothness on the boundary condition and all the coefficient functions, from where we will deduce the necessary regularity. However, it is also interesting to consider boundary conditions with less regularity. In this case,  we need to compensate the regularity loss by imposing stronger diffusion conditions on the forward variable, namely asking for uniform ellipticity of the diffusion matrix $V$.

\begin{description}
\item[(SB)] We say that assumption \textbf{(SB)} holds if the mapping $\phi$  in \eqref{NLFBSDE} is $C_b^\infty$.
\item [(LB)] We say that assumption \textbf{(LB)} holds if the mapping $\phi$  in \eqref{NLFBSDE} is uniformly Lipschitz continuous and if the matrix $VV^T$ is uniformly elliptic i.e., there exists $c>0$ such that
\[\forall (t,y,w)\in [0,T]\times \R^d \times \R, \forall \varsigma \in \R^d ,  c^{-1} |\varsigma |^2\leq VV^T(t,y,w) \varsigma \cdot \varsigma \leq  c |\varsigma |^2.\]
\end{description}

\begin{remarque}
A reader familiarized with the cubature method might wonder why we assume uniform ellipticity instead of the weaker UFG condition usually needed for applying the method with Lipschitz boundary conditions. The reason is that the smoothing results of Kusuoka and Stroock \cite{kusuoka_applications_1987} hold for space dependent vector fields only, and therefore do not apply directly to our framework with a time dependence coming from the McKean term. There is some extension in the time inhomogeneous case that do not include derivatives in the $V_0$ direction (see for example \cite{cattiaux_hypoelliptic_2002} and references therein), but, to the best of our knowledge, there is no result that could be applied  to our framework.
\end{remarque}

\textbf{Towards a more general class of coefficients.} We have chosen to work with the assumed explicit dependence of the coefficients with respect to the law, as it is very natural in practice. In fact as a consequence of our analysis, our algorithm works for a more general class of MKV-FBSDE, i.e. for a system written as
\begin{equation}\label{NLFBSDEG2}
\left\lbrace\begin{array}{ll}
dX^x_t=\sum_{i=0}^dV_i(t,X^x_t,\mu_t)  dB^i_t\\
dY^x_{t}  = -f(t,X^x_t,Y^x_t,Z^x_t,\mu^{X,Y}_t)dt + Z^x_t dB_t^{1:d}\\
X^x_0 = x,\quad Y^x_{T}  = \phi(X^x_T),
\end{array}
\right.
\end{equation}
where $\mu^{X,Y}=(\mu^{X,Y}_t)_{0\le t\leq T}$ denotes the joint law of $(X_t,Y_t)_{0\leq t\leq T}$, the coefficients $V_i$, $0\leq i \leq d$ (and $f$) are Lipschitz continuous with respect to an appropriately defined distance in the space of probability measures on $\R^d$ (respectively $\R^d\times \R$ ) which integrates the usual Euclidean norm. 
The distance we consider is defined by duality: let $\mF$ be a (sufficiently rich\footnote{It should contain the space of $1$-Lipschitz functions.}) class of functions (that will be detailed in the following). Then we define the distance $d_\mF$ between two probability measures on $\R^n$ by
\begin{equation}
d_\mF(\mu,\nu) = \sup_{\varphi \in \mF} |\la\varphi,\mu-\nu\ra|.\label{Eq:distance}
\end{equation}

In this decoupled case the Lipschitz property of the coefficients with respect to $d_\mF$ ensures the existence of a unique solution of \eqref{NLFBSDEG2} \footnote{by definition of $\mF$, the distance $d_\mF$ is less than or equal to the Wasserstein 2 distance. Then, one uses the same kind of arguments as in \cite{sznitman_topics_1991}.}. Then, we are able to analyze the convergence of our procedure in two different cases. When $\mF$ is the class of 1-Lipschitz functions, i.e. when the distance is the so-called Wasserstein-1 distance, and when the vector fields are uniformly elliptic our algorithm leads to an $N^{-1/2}$ order approximation\footnote{Recall that $N$ denotes the number of discretization steps.}. When $\mF$ is the class of $C^{\infty}_b$ functions we obtain an $N^{-1}$ order approximation without any ellipticity assumption on the diffusion matrix.

\textbf{Objectives and organization of this paper.} As a corollary of the discussion on the conditional system we can resume our objective as the approximation of $\E \phi(X^x_T)$, where $(X^x_t)_{0\leq t \leq T}$ is the solution of \eqref{NLFBSDE} and of $u$ satisfying \eqref{Eq:DefU}.\\

This paper is organized as follows: Section \ref{Sec:algo} states the algorithm, while the convergence rate of the forward and backward approximations is stated in Section \ref{Sec:MainResult}. Then, we give a numerical example for each set of hypotheses \SB and \LB in Section \ref{Sec:NumEx}. A class of control problems  is introduced in Section \ref{Sec:ControlMeanField}. The remainder of the paper is dedicated to the proof of the convergence. For the sake of simplicity, we first recall some definitions, basic facts and notations in Section \ref{Sec:Preliminaire}. The forward and backward convergence rates for regular boundary conditions are successively proved in Section \ref{Sec:ProofMR} and the common mathematical tools are given in Section \ref{Sec:MathTools}. Section \ref{Sec:Lipschitz} presents the extension to the Lipschitz boundary condition case and the announced generalization of the law dependence of the McKean terms.\\

\textbf{Notations.} As we are treating with objects exhibiting different dependences, the notation can become a bit heavy. For the sake of simplicity, we adopt the following conventions. We denote by $\varphi$ the function $\varphi=[\varphi_1,\cdots,\varphi_d]$. For two positive integers $i<j$, the notation ``$i:j$" means ``from index $i$ to $j$". For all $\varsigma \in \R^n,\ n \in \mathbb{N}$ the partial derivative $[\p/\p \varsigma]$ is denoted by $\p_\varsigma$. Let $g:\ y\in \R^d \mapsto g(y) \in \R$ be a $p$-continuously differentiable function. We set $||g||_{\infty,p}:=\max_{j\leq p}||\p^j_y g||_{\infty}$. We say that a function $g$ from $[0,T] \times \R^d \times \R^d$ is $C^{p}_b$, $p \in \NN^*$ if it is bounded and $p$-times continuously differentiable with bounded derivatives. We usually denote by $\eta$ (eventually with an exponent) a family of probability measures $(\eta_t)_{0\leq t\leq T}$ on $\R^d$. For such a family, we set $\mL^\eta$ to be the second order operator of the form \eqref{defmL} 
with $\eta$ instead of $\mu$. In general, we will work with the vector fields taken at the point $(\cdot,\cdot,\la\mu_{\cdot},\varphi_{i}\ra)$ (where the $i\in\{0,\cdots,d\}$ signals the corresponding vector field) and in general we will omit the explicit dependence on $\mu$ in the notation. In any case, we will mark the law dependence explicitly when needed, in particular when a dependence with respect to a different law appears. 

\section{Algorithm} \label{Sec:algo}

\textbf{Multi-index.} Multi-indices allow to easily manage differentiation and integration in several dimensions. Let 
\begin{equation}\label{Eq:defM} 
\mc{M} =  \{ \emptyset \} \cup \bigcup_{l\in\NN^*} \ \{0,1,\ldots,d\}^l,
\end{equation}
denotes the set of multi-indices where $\emptyset$ refers, for the sake of completeness, to the zero-length multi-index.  We define ``$*$'' to be the concatenation operator such that if $\beta^1=(\beta^1_1,\ldots,\beta^1_l)$ and $\beta^2=(\beta^2_1,\ldots,\beta^2_n)$ then $\beta^1*\beta^2 = (\beta^1_1,\ldots,\beta^1_l,\beta^2_1,\ldots,\beta^2_n)$.\\

\textbf{Cubature on Wiener Space.} In the introduction, we mentioned that the cubature method consists in replacing the Brownian path by choosing randomly a path $\omega$ among a finite subset $\{\omega_1,\cdots,\omega_\kappa\}$, $\kappa\in \mathbb{N}^*$, of $C^0_{{\rm bv}}([0,T],\R^d)$ (the set of continuous functions from $[0,T]$ to $\R^d$ with bounded variations) with probability $\lambda$ in $\{\lambda_1,\cdots,\lambda_\kappa\}  \subset \R^+$. We precise this notion with the definition given by Lyons and Victoir \cite{lyons_cubature_2004}:

\begin{definition}{}
Let $m$ be a natural number and $t\in \R^{+}$. A $m$-cubature formula on the Wiener space $C^{0}([0,t],\R^d)$ is a discrete probability measure $\Q_t$ with finite support on $C_{\mathrm{bv}}^{0}([0,t],\R^d)$ such that the expectation of the iterated Stratonovitch integrals of degree $m$ under the Wiener measure and under the cubature measure $\Q_t$ are the same, i.e., for all multi-index $(i_1,\cdots, i_l) \in \{1,\cdots,d\}^l$, $l\leq m$
\begin{equation*}
\E \int_{0<t_{1}<\cdots<t_{l}<t}  \circ dB_{t_{1}}^{i_1} \cdots  \circ dB_{t_{l}}^{i_l}= \E_{\Q_t} \int_{0<t_{1}<\cdots<t_{l}<t}   \circ dB_{t_{1}}^{i_1}\cdots  \circ dB_{t_{l}}^{i_l}=\sum_{j=1}^l \lambda_{j}\int_{0<t_{1}<\cdots<t_{l}<t}   d\omega^{i_1}_j (t_1)\cdots d\omega_{j}^{i_l}(t_l),\\
\end{equation*}
where ``$\circ$'' stands for the Stratonovich operator and $\omega_j^i$ for the $i^{{\rm th}}$ coordinate of the $j^{{\rm th}}$ path.
\end{definition}
As a direct consequence of the Taylor-Stratonovitch expansion, a cubature formula of degree $m$ is such that:
\begin{equation}
|(\E  - \E_{\Q_t})F(B^{1:d}_t)| \leq C t^{(m+1)/2} ||F||_{m+2,\infty},
\end{equation}
for all bounded and $m+2$ times continuously differentiable function $F$ with bounded derivatives. 

Of course this error control is not in general small, but the Markovian and scaling properties of the Brownian motion can be used to apply the cubature method iteratively in small subdivisions of the interval $[0,t]$ for which we have a good error control. 

Indeed, consider a cubature formula $\Q_1$ of order $m\in \mathbb{N}^*$ with support $\{\omega_1,\ldots,\omega_\kappa\}$ and corresponding weights $\{\lambda_1,\ldots,\lambda_\kappa\}$. For all $h>0$ and any $t\in [0,T-h]$, one can deduce a cubature measure $\Q_{t,t+h}$ of order $m$ with finite support on $C_{\mathrm{bv}}^{0}([t,t+h],\R^d)$ equal to $\{\tilde{\omega}_1,\cdots,\tilde{\omega}_\kappa\}$ with the same weights $\{\lambda_1,\ldots,\lambda_\kappa\}$ and where the paths are defined as $\tilde{\omega}_j: s\in [t,t+h] \mapsto \tilde{\omega}(s) = \sqrt{h}\omega_j((s-t)/h)$ for all $1\leq j \leq \kappa$.

Then, by virtue of the Markovian property, this subdivision leads to the construction of a tree which has $\kappa^k$ nodes (corresponding to the number of paths) at the $k^{{\rm th}}$ subdivision. Each path $\tilde{\omega}_{(i_1,\ldots,i_k)}$, where $(i_1,\ldots,i_k)$ stands for the trajectory of the path, has then a cumulate weight of the form $\Lambda_{(i_1,\ldots,i_k)} =\prod_{j=1}^{k} \lambda_{i_j}$, see the example and figure \ref{treecub} below.\\

\subsection{Main idea}
Take a subdivision of the time interval $[0,T]$ into $N\in \NN^*$ steps $0=T_0<\cdots<T_N=T$. The procedure can be decomposed in two parts: 
\begin{enumerate}
\item \textbf{Building the tree $\mathcal{T}$.} This part of our algorithm can be resumed as a combined Euler-cubature approach and can be divided in four steps.
\begin{enumerate}
\item First, freeze in the space of probability measures the law that appears in the coefficients of \eqref{NLFBSDE} (the choice of this measure is explained below). At step 0, this measure is the Dirac mass in the starting point.
\item Freeze, in time, the given deterministic measure: this is an Euler step. 
\item Apply the cubature method. This will produce a cloud of deterministic particles given by the solution of the resulting ODEs
\item At each step, construct a discrete measure coming from the obtained cloud of particles and their associated cumulative weights. This is the law to be used to approximate the law in the coefficients of \eqref{NLFBSDE}.
\end{enumerate}
The reader might guess that the order of approximation of such an algorithm is one, due to the Euler step. Hence, in order to obtain higher order, we expand the function that appears in the McKean-Vlasov part up to a certain order, which is denoted by ``$q$" in the sequel.
\item \textbf{The backward component}. The backward component of the algorithm runs by assigning the value of the function on the boundary (known from the definition of the equation), and then back-propagating its value thanks to
\begin{enumerate}
\item A discretization scheme for the backward approximation
\item The cubature measure for finding conditional expectations.
\end{enumerate}
As mentioned before, we present two versions of the algorithm, with convergence of order one and two. As will be clear in the definition, the change in convergence order requires the use of a different backward scheme and cubature order.
\end{enumerate}

\textbf{Example: one dimensional cubature of order $m=3$.} In this case, we may use a cubature formula with $\kappa=2$ paths given by $\{+t,-t\}$, and associated weights: $\{\lambda_1=1/2, \lambda_2=1/2\}$. Let us explain the idea behind the algorithm with an example for $2$ steps,  as shown in Figure \ref{treecub}. We initialize the tree at a given point $x$, and the law at $T_0$ as $\delta_x$.  Then, we find two descendants given as the solution of an ODE that uses the position $X$, the two cubature paths, and an approximated law using the information at time $0$. Each descendant will have a weight equal to the product of the weight of its parent times the weight given to the corresponding cubature path. Once all nodes at time $T_{1}$ are calculated, we obtain the discrete measure $\hmu_{T_{1}}$, the law approximation at time $T_{1}$. The process is then repeated for each node at time $T_1$ to reach the final time $T=T_2$.

Figure \ref{treecub} right illustrates the idea behind the backward approximation: the approximated function $\hu$ is defined first at the leaves of the constructed tree, and then back-propagates using the approximated law to obtain $\hu$ at previous times. The back-propagation is made by conditional expectation: average with respect to the weight of each cubature path.

\begin{figure}[h!]
\begin{center}
\begin{tikzpicture}[scale=0.8, every node/.style={transform shape}]
\tikzstyle{level 0}=[]
\tikzstyle{level 1}=[level distance=2.5cm, sibling distance=4cm]
\tikzstyle{level 2}=[level distance=3.5cm, sibling distance=2cm]
\tikzstyle{edge from parent}=[->,draw]
\node[level 0] {$x$} [grow'=right,sibling distance=3cm]
  child {node[level 1] {$\hX_{T_1}^{(1)},\lambda_1$}
         child {node[level 2] {$\hX_{T_2}^{(1,1)},\Lambda_{(1,1)} = \lambda_1\lambda_1$} 
                edge from parent node[above left] {}
               }
         child {node[level 2] {$\hX_{T_2}^{(1,2)},\Lambda_{(1,2)} = \lambda_1\lambda_2$}
                edge from parent node[below left] {}
               }
         edge from parent node[above left] {}
        }
  child {node[level 1] {$\hX_{T_1}^{(2)},\lambda_2$}
         child {node[level 2] {$\hX_{T_2}^{(2,1)},\Lambda_{(2,1)} =\lambda_2\lambda_1$}           
                edge from parent node[above left] {}
               }
         child {node[level 2] {$\hX_{T_2}^{(2,2)},\Lambda_{(2,2)} = \lambda_2\lambda_2$}
                edge from parent node[below left] {}
               }
         edge from parent node[below left] {}
        };
\end{tikzpicture}
\begin{tikzpicture}[scale=0.8, every node/.style={transform shape},level distance=2.5cm]
\tikzstyle{level 0}=[]
\tikzstyle{level 1}=[level distance=2.5cm, sibling distance=4cm]
\tikzstyle{level 2}=[level distance=3.5cm, sibling distance=2cm]
\tikzstyle{edge from parent}=[<-,draw]
\node[level 0] {$\hu(0,x)$} [grow'=right,sibling distance=3cm]
  child {node[level 1] {$\hu(T_{1},\hX_{T_1}^{(1)})$}
         child {node[level 2] {$\hu(T_{2},\hX_{T_2}^{(1,1)}) = \phi(\hX_{T_2}^{(1,1)})$ }
                edge from parent node[above left] {}
               }
         child {node[level 2] {$\hu(T_{2},\hX_{T_2}^{(2,1)}) = \phi(\hX_{T_2}^{(2,1)})$}
                edge from parent node[below left] {}
               }
         edge from parent node[above left] {}
        }
  child {node[level 1] {$\hu(T_{2},\hX_{T_1}^{(2)})$}
         child {node[level 2] {$\hu(T_{2},\hX_{T_2}^{(1,2)}) = \phi(\hX_{T_2}^{(1,2)})$}           
                edge from parent node[above left] {}
               }
         child {node[level 2] {$\hu(T_{2},\hX_{T_2}^{(2,2)}) = \phi(\hX_{T_2}^{(2,2)})$}
                edge from parent node[below left] {}
               }
         edge from parent node[below left] {}
        };
\end{tikzpicture}\\
\begin{tikzpicture}[scale=0.8, every node/.style={transform shape}, level distance=2.5cm]
\tikzstyle{level 0}=[level distance=2.5cm]
\tikzstyle{level 1}=[level distance=2.5cm]
\tikzstyle{level 2}=[level distance=3.5cm]
\tikzstyle{edge from parent}=[->, draw]
\node[level 0] {$\delta_x$} [grow'=right,sibling distance=3cm]
  child {node[level 1] {$\hmu_{T_1}$}
         child {node[level 2] {$\hmu_{T_2}$ }} };
\end{tikzpicture}
\hspace{2 em}
\begin{tikzpicture}[scale=0.8, every node/.style={transform shape}, level distance=2.5cm]
\tikzstyle{level 0}=[level distance=2.5cm]
\tikzstyle{level 1}=[level distance=2.5cm]
\tikzstyle{level 2}=[level distance=3.5cm]
\tikzstyle{edge from parent}=[-, draw]
\node[level 0] {$\delta_x$} [grow'=right,sibling distance=3cm]
  child {node[level 1] {$\hmu_{T_1}$}
         child {node[level 2] {$\hmu_{T_2}$ }} };
\end{tikzpicture}
\caption{Left: Cubature tree.  Right: Backward scheme. }
\label{treecub}
\end{center}
\end{figure}
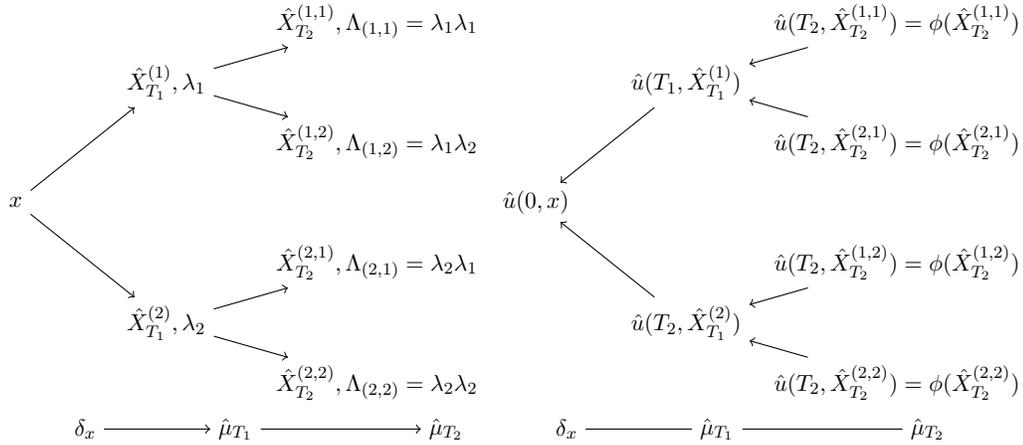


\subsection{Algorithms}
Having the general idea in mind, we can give a precise description of each of the two main parts of our proposed algorithm. Since the cubature involves Stratanovitch integrals, we set:
\begin{equation}\label{Eq:V0Bar}
\bV_0^k = V_0^k - \frac{1}{2}\sum_{i,j=1}^d V_j^i \frac{\p}{\p x_j} V_k^j,
\end{equation}
for all $k \in \{1,\cdots,d\}$ and rewrite the system \eqref{NLFBSDE} as:

\begin{equation}\label{NLFBSDEStrata}
\left\lbrace\begin{array}{ll}
dX^x_t=\bV_0(t,X^x_t,\E \varphi_0(X^x_t))dt + \sum_{i=1}^dV_i(t,X^x_t,\E \varphi_i(X^x_t)) \circ dB^i_t\\
dY^x_{t}  = -f(t,X^x_t,Y^x_t,Z^x_t,\E \varphi_f(X^x_t,Y^x_t))dt + Z^x_t dB_t^{1:d}\\
X^x_0 = x,\quad Y^x_{T}  = \phi(X^x_T).
\end{array}
\right.
\end{equation}

In order to make the description of the algorithm as clear as possible, for any $k,\kappa$ in $\NN$, we set $\mc{S}_\kappa(k) = \{\text{multi-index }(j_1,\cdots,j_k) \in \{1,\cdots,\kappa\}^k\}$, i.e., $\mc{S}_\kappa(k)$ is the set of multi-indices with entries between $1,\ldots,\kappa$ of length (exactly) $k$.

\subsubsection{Building the tree $\mathcal{T}(\gamma,q,m)$}\label{Sec:AlgoCubTree}

\textbf{The subdivision.} Let  $\gamma> 0$, $N\in \mathbb{N}^*$, let $0=T_0<\ldots<T_N=T$ be a discretization of the time interval $[0,T]$ given as
\begin{equation}\label{Eq:TimeDiscretization}
T_k = T\left[1-\left(1-\frac{k}{N}\right)^\gamma\right]
\end{equation}
and let $\Delta_{T_k} = T_k-T_{k-1}$.

\begin{remarque}
When the boundary condition is not smooth, we take a non-uniform subdivision in order to refine the discretization step close to the boundary as proposed by Kusuoka in \cite{kusuoka_approximation_2001}.  If, on the contrary, the boundary condition is smooth, we may use a classical uniform discretization. For this reason, in the following we assume that $\gamma=1$ if \textbf{(SB)} holds, and that  $\gamma> m-1$ under \textbf{(LB)}.
\end{remarque}

Let $\gamma$ be given as explained above, $q$ and $m$ be two given integers, and $\big\{\{\omega_1,\cdots,\omega_\kappa\},\{\lambda_1,\cdots,\lambda_\kappa\}\big\}$ be a $m$ order cubature (the number $\kappa$ of paths and weights depends on $m$). Recall that $\omega_j: t \in [0,1] \mapsto (\omega^{1}_j(t),\ldots,\omega^{d}_j(t)) \in \R^d$ is some continuous function with bounded variation and for all $t$ in $[0,T]$, we set $\omega_0(t)=t$. Examples of cubature formulas of order $3,5,7,9,11$ can be found in \cite{lyons_cubature_2004} or \cite{gyurko_efficient_2011}. 
\small
\begin{algorithm}[h!]
\caption{Cubature Tree $\mathcal{T}(\gamma,q,m)$}\label{algo:KF}
\begin{algorithmic}[1]
\State Set $(X^{\emptyset},\hat{\mu}_{T_0},\Lambda_0) =(x, \delta_x,1)$
\For{$0 \leq i \leq d$}
\State Set $\displaystyle F_i(t,\hat{\mu}_{T_{0}})=\sum_{p=0}^{q-1} \dfrac{1}{p!}(t-T_{0})^p \la\delta_x, (\mathcal{L}^{\delta_x})^p \varphi_i\ra$
\EndFor
\For {$1 \leq k \leq N-1$}
\For {$\pi \in \mc{S}_\kappa(k)$}
\For {$1 \leq j \leq \kappa$}
\State Define $\hat{X}_{T_{k+1}}^{\pi * j}$ as the solution of the ODE:
\begin{equation*}
\left.\begin{array}{ll}
d\hX^{\pi * j}_t =  \sum_{i=0}^{d}V_i(t,\hX_t^{\pi * j},F_i(t,\hat{\mu}_{T_k}))  \sqrt{\Delta_{T_{k+1}}}d\omega_{j}^{i}((t-T_k)/\Delta_{T_{k+1}}),\\
\hX^{\pi *j}_{T_k}=\hX^{\pi}_{T_k}
\end{array}
\right.
\end{equation*}
\State Set the associated weight: $\Lambda_{\pi * j} = \Lambda_{\pi} \lambda_j$
\EndFor
\EndFor
\State Set $\displaystyle \hat{\mu}_{T_{k+1}} =\sum\limits_{\pi \in \mc{S}_\kappa(k+1)}^{} \Lambda_{\pi} \delta_{\hX_{T_{k+1}}^{\pi}}$
\For{$0 \leq i \leq d$}
\State Set $\displaystyle F_i(t,\hat{\mu}_{T_{k+1}})=\sum_{p=0}^{q-1} \dfrac{1}{p!}(t-T_k)^p \la\hmu_{T_{k+1}} , (\mathcal{L}^{\hmu})^p \varphi_i\ra$
\EndFor
\EndFor
\end{algorithmic}
\end{algorithm} 
\newpage
\subsubsection{Backward scheme}
\begin{algorithm}[h!]
\caption{First order backward scheme}\label{alg:FirstOrder}
\begin{algorithmic}[1]
\For {$\pi \in \mc{S}_\kappa(N)$}
\State Set $ \displaystyle \hu^1(T_N,\hX_{T_N}^{\pi})  =\phi(\hX^{\pi}_{T_N})$
\State Set $\displaystyle \hv^1(T_N,\hX_{T_N}^{\pi})  = 0$ \label{algfo:step1}
\EndFor
\For {$N-1 \geq k \geq 1$}
\For {$\pi \in \mc{S}_\kappa(k)$}
\State $\displaystyle \hv^1(T_k, \hX^{\pi}_{T_k})  =  \frac{1}{\Delta_{T_{k+1}}}\sum_{j=1}^{\kappa} \lambda_j \hu^1(T_{k+1},X^{\pi * j}_{T_{k+1}}) \sqrt{\Delta_{T_{k+1}}}\omega_{j}(1)$
\For{$1 \leq j \leq \kappa$}
\State $\displaystyle\hat{\Theta}^{\pi,1}_{k+1,k}(j)=\left(T_{k+1},\hX^{\pi * j}_{T_{k+1}},\hu^1(T_{k+1},\hX^{\pi * j}_{T_{k+1}}) ,\hv ^1(T_{k},\hX^{\pi}_{T_k}),  F^1(T_{k+1},\hmu_{T_{k+1}})\right)$ \label{Eq:DefHThetaK1}
\EndFor
\State $\displaystyle \hu^1(T_k, \hX^{\pi}_{T_k}) = \sum_{j=1}^{\kappa} \lambda_j \left(\hu^1(T_{k+1},X^{\pi * j}_{T_{k+1}})+\Delta_{T_{k+1}} f(\hat{\Theta}_{k+1,k}(j)) \right)$
\EndFor
\State Set $\displaystyle F^1(T_{k+1},\hat{\mu}_{T_{k+1}})= \la\hmu_{T_{k+1}}, \varphi_f(\cdot,\hu^1(T_{k+1},\cdot))\ra$
\EndFor
\end{algorithmic}
\end{algorithm} 

\begin{algorithm}[h!]
\caption{Second order backward scheme}\label{alg:SecondOrder}
\begin{algorithmic}[1]
\For {$\pi \in \mc{S}_\kappa(N)$}
\State Set 
$\hu^2(T_N,\hX_{T_N}^{\pi}) =\phi(\hX^{\pi}_{T_N})$
\State Set $\hv^2(T_N,\hX_{T_N}^{\pi}) = 0$ \label{algso:step1}
\EndFor
\For {$\pi \in \mc{S}_\kappa(N-1)$}
\State Set 
$\hu^2(T_{N-1},\hX_{T_{N-1}}^{\pi}) =\hu^1(T_{N-1},\hX_{T_{N-1}}^{\pi})$ and  $\hv^2(T_{N-1},\hX_{T_{N-1}}^{\pi}) = \hv^1(T_{N-1},\hX_{T_{N-1}}^{\pi})$ \label{algso:step2}
 \State Set $\displaystyle F^2(T_{N-1},\hat{\mu}_{T_{N-1}})= \la\hmu_{T_{N-1}},\varphi_f(\cdot,\hu^2(T_{N-1},\cdot))\ra$
\EndFor

\For {$N-2 \geq k \geq 1$}
\For {$\pi \in \mc{S}_\kappa(k)$}
\For{$1 \leq j \leq \kappa$}
\State $\displaystyle  \hTheta^{\pi,2}_{k+1}(j)=\left(T_{k+1},\hX^{\pi * j}_{T_{k+1}},\hu^2(T_{k+1},\hX^{\pi * j}_{T_{k+1}}) ,\hv^2(T_{k+1},\hX^{\pi * j}_{T_{k+1}}),  F^2(T_{k+1},\hat{\mu}_{T_{k+1}})\right)$ \label{Eq:DefHThetaK2}
\State $\displaystyle \hzeta^{\pi*j}_{k+1}:=4\frac{1}{\Delta_{T_{k+1}}}\sqrt{\Delta_{T_{k+1}}}\omega_j(1)-6 \frac{1}{\Delta^2_{T_{k+1}}}\int_{T_k}^{T_{k+1}} (s-T_k) \sqrt{\Delta_{T_{k+1}}}d\omega_j((s-T_k)/\Delta_{T_{k+1}})$\label{Eq:DefHZeta}
\EndFor
\State Set $\displaystyle  \hv^2(T_k, \hX^{\pi}_{T_k})  = \sum_{j=1}^{\kappa} \lambda_j \left(\hu^2(T_{k+1},X^{\pi*j}_{T_{k+1}}) + \Delta_{T_{k+1}}f({\hTheta}^{\pi*j,2}_{k+1}) \right) \hzeta_{k+1}^{\pi*j}$
\State [\emph{Predictor}]
 $\displaystyle
\tu(T_k, \hX^{\pi}_{T_k})  =  \sum_{j=1}^{\kappa} \lambda_j \left(\hu^2(T_{k+1},X^{\pi * j}_{T_{k+1}})+\Delta_{T_{k+1}} f({\hTheta}^{\pi*j,2}_{k+1}) \right)$
\State Set $\tilde{F}(T_{k},\hat{\mu}_{T_{k}})=\E_{\hmu_{T_{k}}} \varphi_f(\cdot,\tu(T_{k},\cdot))$
\State [\emph{Corrector}] $\displaystyle \tilde{\Theta}^\pi_{k}=\left(T_{k},\hX^{\pi }_{T_k},\tu(T_{k},\hX^{\pi }_{T_k}) ,\hv^2(T_{k},\hX^{\pi }_{T_k}),\tilde{F}(T_{k},\hat{\mu}_{T_{k}})\right)$ \label{Eq:DefHThetaK}
\State $ \displaystyle \hu^2(T_k, \hX^{\pi}_{T_k}) =  \sum_{j=1}^{\kappa} \lambda_j \left(\hu^2(T_{k+1},X^{\pi * j}_{T_{k+1}})+\frac{1}{2}\Delta_{T_{k+1}} \left(f({\hTheta}_{k+1}^{\pi*j,2}) +f(\tilde{\Theta}_{k}^{\pi}) \right) \right)$
\State Set $\displaystyle F^2(T_{k},\hat{\mu}_{T_{k}})= \la\hmu_{T_{k}}, \varphi_f(\cdot,\hu^2(T_{k},\cdot))\ra$
\EndFor
\EndFor
\end{algorithmic}
\end{algorithm} 
\newpage
\normalsize

\begin{remarque}
The initialization value for $v$ at the boundary, that we have fixed in $0$, is arbitrary, given that the first steps in both algorithms does not use this value. 

However, if the algorithm is used under \SB and the values of $D_x u$ can be easily calculated on the boundary, we have a natural initialization value for $v$. In this case, we may initialize the backward algorithms of order one and two to reflect this additional information, by setting $v(t,x) = (\mc{V}^{\mu}u(t,x))^T$.

This modification will have no effect at all for the first order scheme, and is interesting only from the point of view of consistence. On the other hand, on the second order algorithm, the natural initialization of $v$ allows to skip the first order step. This change does not affect the overall rate of convergence of the algorithm but will induce a reduction in the error constant whence of the total approximation error.
\end{remarque}

It is worth noticing that the given algorithm is particularly effective for treating the McKean dependence of the backward component. Indeed, note that the expectation of any regular enough function of $\hu$ is readily available given that the support of the approximating measure $\hmu$ coincides with the points where $\hu$ is available. Of course the situation is quite different when a different approach, like a particle method, is used. 

\section{Main Results}\label{Sec:MainResult}

In this section, we first give the rate of convergence of our algorithms \ref{algo:KF}, \ref{alg:FirstOrder} and \ref{alg:SecondOrder} when both the coefficients and terminal condition in \eqref{NLFBSDE} are smooth. This is given in Theorem \ref{MR} below. Then, we give the rate when the boundary condition is Lipschitz and when the diffusion part of \eqref{NLFBSDE} is uniformly non-degenerate. This does not really affect the convergence order, provided the subdivision is taken appropriately. The result is summarized in Corollary \ref{Lipcase} below. Finally,  we give the convergence of a version of our algorithm applied to equation \eqref{NLFBSDEG2}: when the dependence of the coefficients with respect to the law is general. This is given in Corollary \ref{wasscase}.

In order to make the exposition of our results clear, let us define, for $i=1,2$:

\begin{equation} \label{Eq:def_epsilon}
\mE^i_u(k):=\max\limits_{\pi\in\mc{S}_\kappa(k)} |u(T_k,\hX^{\pi}_{T_k})-\hu^i(T_k,\hX^{\pi}_{T_k})|; \quad \quad \mE^i_v(k):=\max\limits_{\pi\in\mc{S}_\kappa(k)} |v(T_k,\hX^{\pi}_{T_k})-\hv^i(T_k,\hX^{\pi}_{T_k})|,
\end{equation}
with $\hu^1,\hu^2, \hv^1$ and $\hv^2$ as defined by the algorithms \ref{alg:FirstOrder} and \ref{alg:SecondOrder} and where $u, v$ are defined in \eqref{Eq:DefU}, \eqref{Eq:DefV}. 

\textbf{Main result in a smooth setting.} We have that
\begin{theoreme}\label{MR} Suppose that assumption \textbf{(SB)} holds. Let $m$ be a given cubature order, $q$ a given non-negative integer and $\mc{T}(1,q,m)$ the cubature tree defined by Algorithm \ref{algo:KF}. Then, there exists a positive constant $C$, depending only on $T$, $q$, $d$, $||\varphi_{0:d}||_{2q+m+2,\infty}$, $||\phi||_{m+2,\infty}$, such that:
\begin{equation}\label{Eq:ErrorforSB}
\max_{k \in \{0,\ldots,N\}}\left|\la \mu_{T_k}-\hmu_{T_k} , \phi\ra\right| \leq C \left(\frac{1}{N}\right)^{[(m-1)/2 ]\wedge q},
\end{equation}
with $\hmu$ as defined in Algorithm \ref{algo:KF}.\\
Suppose in addition that $q\geq 1$ and $m\geq3$. Then, there exists a positive constants $C_1$, depending only on $T$, $q$, $d$, $||\varphi_{0:d}||_{2q+m+2,\infty}$, $||\varphi_f||_{m+2,\infty}$,$||\phi||_{m+3,\infty}$, such that for all $k=0, \ldots, N$:
\begin{equation}
\mE^1_u(k) + \Delta_{T_k}^{1/2} \mE^1_v(k) \leq C_1 \left(\frac{1}{N}\right),\label{Eq:rate1}
\end{equation}
Moreover,  suppose in addition that  $q\geq 2$ and $m\geq 7$. Then, there exists a positive constant $C_2$, depending only on $T$, $q$, $d$, $||\varphi_{0:d}||_{2q+m+2,\infty}$, $||\varphi_f||_{m+2,\infty}$,$||\phi||_{m+4,\infty}$, such that for all $k=0, \ldots, N$:
\begin{equation}
\mE^2_u(k) + \Delta_{T_k}^{1/2} \mE^2_v(k) \leq C_2 \left(\frac{1}{N}\right)^2.\label{Eq:rate3}
\end{equation}
\end{theoreme}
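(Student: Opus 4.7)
The natural route is to exploit the conditional-system viewpoint described in Section 1: if the measure flow $\mu$ is frozen, the MKV system reduces to a time-inhomogeneous FBSDE and Algorithms \ref{algo:KF}, \ref{alg:FirstOrder}, \ref{alg:SecondOrder} degenerate to classical cubature / Crisan--Manolarakis schemes. The plan is therefore to split each one-step error into two pieces, the cubature error of a frozen-measure scheme and the error from substituting the true moments $\la\mu_t,\varphi_i\ra$ by the Taylor approximants $F_i(t,\hmu_{T_k})$ used in Algorithm \ref{algo:KF}, and then to close the argument by discrete Gronwall inductions in forward time for the $X$-estimate and in backward time for $(\hu,\hv)$.

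\textbf{Forward estimate.} For a fixed measure flow $\mu$ I would introduce the classical (non-MKV) diffusion $X^\mu$ driven by coefficients $V_i(t,\cdot,\la\mu_t,\varphi_i\ra)$, whose marginals coincide with $\mu_t$. Writing $\bX$ for the cubature-tree approximation of $X^\mu$ with the true moments frozen in the drift/diffusion, Lyons--Victoir theory under \textbf{(SB)} on a uniform grid ($\gamma=1$) yields, for the associated empirical measure $\bmu_{T_k}$,
\begin{equation*}
\max_k |\la\mu_{T_k}-\bmu_{T_k},\phi\ra| \le C N^{-(m-1)/2}.
\end{equation*}
Comparing $\bX$ with the algorithm's process $\hX$ introduces an additional local error of order $\Delta_{T_k}^{q}$ from truncating at order $q-1$ the Taylor expansion defining $F_i(t,\hmu_{T_k})$: smoothness of $\varphi_i$ and boundedness of the vector fields make the iterated operators $(\mathcal{L}^\mu)^p\varphi_i$ bounded and the remainder estimate classical. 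Propagating this local error, together with the perturbation of the coefficients induced by the substitution $\mu_{T_k}\mapsto\hmu_{T_k}$, and closing a discrete Gronwall induction in the dual norm $\sup_\psi|\la\mu_{T_k}-\hmu_{T_k},\psi\ra|$ taken over a class of test functions regular enough to absorb $\phi$ and the vector-field iterates, delivers the announced rate $N^{-[(m-1)/2]\wedge q}$.

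\textbf{First-order backward estimate.} For the first-order scheme the plan is to compare $(\hu^1,\hv^1)$ with the frozen-measure cubature BSDE output $(\tu^1,\tilde v^1)$, for which a classical Crisan--Manolarakis analysis gives $\|u(T_k,\cdot)-\tu^1(T_k,\cdot)\|_\infty = O(N^{-1})$ and the corresponding $\Delta_{T_k}^{1/2}$-weighted bound on the $v$-component. The discrepancy between $(\tu^1,\tilde v^1)$ and $(\hu^1,\hv^1)$ comes from (i) the fact that $\hX$ is driven by $F_i(t,\hmu_{T_k})$ instead of $\la\mu_t,\varphi_i\ra$, controlled by the forward estimate above, and (ii) the use of $F^1(T_{k+1},\hmu_{T_{k+1}})=\la\hmu_{T_{k+1}},\varphi_f(\cdot,\hu^1(T_{k+1},\cdot))\ra$ inside the generator in place of $\la\mu_{T_{k+1}},\varphi_f(\cdot,u(T_{k+1},\cdot))\ra$. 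A key simplification is that $\hmu_{T_{k+1}}$ is supported exactly on the tree nodes where $\hu^1$ is computed, so (ii) is an explicit weighted sum. A backward discrete Gronwall argument using Lipschitz regularity of $f$ and $\varphi_f$, combined with the forward rate, closes the induction at rate $N^{-1}$.

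\textbf{Second-order scheme and main obstacle.} The second-order analysis follows the same template with two sharper ingredients. First, the cubature must be exact on iterated Stratonovich integrals up to length $5$ (hence $m\ge 7$), so that the one-step cubature error at frozen measure is of order $\Delta_{T_k}^{(m+1)/2}\ge\Delta_{T_k}^4$; coupled with the predictor-corrector step built on the auxiliary weights $\hzeta^{\pi*j}_{k+1}$ designed to approximate $Z$ via a trapezoidal-type quadrature of the generator, this raises the per-step truncation error to the required order. Second, and this is where I expect the real obstacle to lie, one must propagate $\mE^2_u$ and $\mE^2_v$ jointly through the backward induction while handling the McKean coupling encoded by $F^2(T_{k+1},\hmu_{T_{k+1}})$: the stability argument must absorb the error $|\la\mu_{T_{k+1}}-\hmu_{T_{k+1}},\varphi_f(\cdot,u(T_{k+1},\cdot))\ra|$ (of size $N^{-2}$ thanks to the improved forward rate when $q\ge 2$ and $m\ge 7$) together with the error produced by replacing $u$ by $\hu^2$ in the integrand, while keeping all constants independent of $N$. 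This requires sharp control of the spatial regularity of $u$ up to order $m+2$, which I would obtain by differentiating the parametrized PDE \eqref{Eq:NonLinearPDE} and using the assumed smoothness of $\phi$ under \textbf{(SB)}.
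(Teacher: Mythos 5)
Your proposal is correct and follows essentially the same route as the paper: freeze the measure to reduce to a time-inhomogeneous FBSDE, split each one-step error into a pure cubature error plus a measure-substitution (order-$q$ Taylor) error, exploit that $\hmu$ is supported exactly on the tree nodes to handle the McKean term in the driver, and close with forward and backward discrete Gronwall arguments using the smoothness of $u$ supplied by the parametrized PDE. Your dual-norm Gronwall over a class of test functions is just a uniform formulation of the paper's bootstrap, which fixes $\eta=\mu$ in a telescoping operator decomposition and iterates the resulting self-referential bound over the finitely many functions $(\mL)^p\varphi_i$; likewise your comparison with an intermediate frozen-measure backward scheme plays the role of the paper's direct scheme/cubature/propagation decomposition of $u-\hu^i$ evaluated at the tree nodes.
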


\textbf{Convergence order for a Lipschitz boundary condition.}
\begin{corollaire}\label{Lipcase}
Suppose that assumption \textbf{(LB)} holds. Let $m$ be a given cubature order, $q$ a given non-negative integer, $\gamma$ a non negative real and $\mc{T}(\gamma,q,m)$ the cubature tree defined by the algorithm \ref{algo:KF}. Then, there exists a positive constant $C$ depending only on $T$, $||\varphi||_{2q+m+2,\infty}$, $||\phi||_{1,\infty}$, such that:
\begin{eqnarray}\label{Eq:ErrorforLB}
\left|\la \mu_{T} - \hmu_{T}, \phi\ra \right|& \leq & C \left(\left(\frac{1}{N}\right)^{(\gamma /2 )\wedge q}\vee L(\gamma,m)\right). 
\end{eqnarray}
where
\begin{equation} \label{Eq:MFunction}
L(\gamma,m) = 
\left\lbrace\begin{array}{lll}
N^{-\gamma /2}& \quad \text{ if } \quad\gamma \in \left(0,m-1\right)\\ 
N^{-(m-1)/2} \ln(N)  &\quad \text{ if } \quad\gamma =m-1\\ 
N^{-(m-1)/2}  &\quad \text{ if } \quad \gamma \in \left(m-1,+\infty\right)
\end{array}
\right.
\end{equation}
Moreover, if $\gamma > m-1$, the results on the error control of $\hu^1,\hv^1 ;\hu^2, $ and $\hv^2$ respectively given by \eqref{Eq:rate1} and \eqref{Eq:rate3} remain valid (with a constant $C_2'$ depending only on $T$, $q$, $d$, $||\varphi_{0:d}||_{2q+m+2,\infty}$, $||\varphi_f||_{m+2,\infty}$,$||\phi||_{1,\infty}$).
\end{corollaire}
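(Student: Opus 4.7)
The plan is to follow the proof of Theorem \ref{MR} term by term, compensating the loss of smoothness of $\phi$ by the parabolic smoothing that becomes available under \textbf{(LB)}. The key analytical input is that, because of the uniform ellipticity of $VV^T$, for any frozen family of measures $\eta=(\eta_s)$ the semigroup $P^\eta_{s,t}$ associated with $\mathcal{L}^\eta$ regularizes Lipschitz data, in the classical Friedman sense
\begin{equation*}
\|D_y^p P^\eta_{s,t}\phi\|_\infty \le C\,(t-s)^{-(p-1)/2}\|\phi\|_{1,\infty}, \qquad p\ge 1,
\end{equation*}
with a constant $C$ independent of $\eta$. The corresponding bounds transfer to the solution $u$ of the parametrised semilinear PDE \eqref{Eq:NonLinearPDE} via a Duhamel representation combined with Gronwall, and the non-uniform time grid $T_k=T[1-(1-k/N)^\gamma]$ is precisely tuned to make the singularity at $t=T$ integrable against the sum of local errors.

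The first step is to reuse the telescopic decomposition of the proof of Theorem \ref{MR} to write $|\langle\mu_T-\hat\mu_T,\phi\rangle|$ as $\sum_k (A_k+B_k)$, where $A_k$ is the one-step cubature error at step $k$ applied to $u(T_{k+1},\cdot)$, and $B_k$ is the error due to the $q$-order Taylor approximation $F_i$ of the McKean terms $\langle\mu_{\cdot},\varphi_i\rangle$ inside the vector fields. Under \textbf{(SB)} these terms were bounded by $C\Delta_{T_{k+1}}^{(m+1)/2}\|\phi\|_{m+2,\infty}$ and $C\Delta_{T_{k+1}}^{q+1}\|\phi\|_{1,\infty}$ respectively. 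Under \textbf{(LB)} I would use the smoothing estimate above to replace these by
\begin{equation*}
A_k \le C\Delta_{T_{k+1}}^{(m+1)/2}(T-T_{k+1})^{-(m+1)/2}\|\phi\|_{1,\infty},\qquad B_k \le C\Delta_{T_{k+1}}^{q+1}(T-T_{k+1})^{-s(q)/2}\|\phi\|_{1,\infty},
\end{equation*}
where $s(q)$ counts the order of the $u$-derivative appearing in the propagation of the McKean error.

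Next I substitute the explicit estimates $\Delta_{T_{k+1}}\le C\gamma T N^{-1}(1-k/N)^{\gamma-1}$ and $T-T_{k+1}\asymp T(1-(k+1)/N)^\gamma$ to obtain Riemann-type sums of the form $N^{-(m+1)/2}\sum_k (1-k/N)^{(\gamma-1)(m+1)/2-\gamma(m+1)/2}=N^{-(m+1)/2}\sum_k (1-k/N)^{-(m+1)/2}$, plus an analogous sum for the McKean term. Bounding each of these using $\sum_k (1-k/N)^a \asymp N$ if $a>-1$, $\asymp N\ln N$ if $a=-1$ and $\asymp 1\cdot N^{-a}$ if $a<-1$ (handling the boundary index $k=N-1$ separately) produces the three regimes of $L(\gamma,m)$ in \eqref{Eq:MFunction}, the logarithmic correction appearing exactly at the balance $\gamma=m-1$. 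The same dichotomy on the $B_k$-sum yields the remaining $N^{-(\gamma/2)\wedge q}$ term in the $\max$: for small $q$ the pure step-size contribution $\Delta_{T_{k+1}}^{q+1}$ controls, while for large $q$ the parabolic singularity limits the rate to $N^{-\gamma/2}$.

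The main obstacle lies in carrying this refinement through the backward scheme, where I would work directly under the assumption $\gamma>m-1$ to ensure that every time-sum encountered is uniformly bounded in $N$. Each place in the proof of Theorem \ref{MR}(ii)-(iii) where a bounded norm of a spatial derivative of $u$ or $v$ was invoked must be replaced by the corresponding singular parabolic bound $(T-t)^{-p/2}\|\phi\|_{1,\infty}$; the condition $\gamma>m-1$ then guarantees the integrability of $\Delta_{T_{k+1}}\cdot (T-T_{k+1})^{-p/2}$ summed over the subdivision at the largest order $p$ appearing in the analysis. Redoing the telescoping and the discrete Gronwall of Theorem \ref{MR} term by term with these modified bounds reproduces the rates \eqref{Eq:rate1} and \eqref{Eq:rate3} with constants depending on $\|\phi\|_{1,\infty}$ instead of higher-order norms. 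The most delicate point is the corrector of Algorithm \ref{alg:SecondOrder}, whose smooth-case analysis rests on an expansion of order two in the time step; one must verify that the remainders, when evaluated against $u$-derivatives singular in $(T-t)$, still sum to an $O(N^{-2})$ total, and this is precisely what the choice $\gamma>m-1$ (together with the assumption $m\ge 7$ inherited from Theorem \ref{MR}) allows.
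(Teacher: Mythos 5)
Your overall strategy is exactly the one the paper follows: keep the telescopic decomposition of Theorem \ref{MR}, replace the bounded-derivative estimates on $\psi$ and $u$ by the singular parabolic bounds $\|D^{l}_y\psi(t,\cdot)\|_\infty\le C\|\phi\|_{1,\infty}(T-t)^{-(l-1)/2}$ (and $M_u(n,t)\le C(T-t)^{(1-n)/2}\|\phi\|^n_{1,\infty}$ for the backward part), and then sum the local errors over the $\gamma$-graded grid; the paper delegates that last summation to Lemma \ref{c5}, whereas you evaluate the Riemann sums directly, and it proves the gradient bounds on $u$ via the Crisan--Delarue-type BSDE argument rather than your Duhamel/Gronwall route, which is an acceptable variant.

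However, the central displayed computation is wrong as written, and with it the derivation of \eqref{Eq:MFunction} collapses. Since the $l$-th derivative of $\psi(t,\cdot)$ is controlled by $(T-t)^{-(l-1)/2}$, the cubature term $\Delta_{T_{k+1}}^{(m+1)/2}\|\psi(T_{k+1},\cdot)\|_{m+1,\infty}$ must be paired with $(T-T_{k+1})^{-m/2}$, not with $(T-T_{k+1})^{-(m+1)/2}$ as in your bound for $A_k$. The correct summand then behaves like $N^{-(m+1)/2}(1-k/N)^{(\gamma-1)(m+1)/2-\gamma m/2}=N^{-(m+1)/2}(1-k/N)^{(\gamma-m-1)/2}$, and it is precisely the comparison of the exponent $(\gamma-m-1)/2$ with $-1$ that produces the three regimes and the threshold $\gamma=m-1$ in $L(\gamma,m)$. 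In your formula the exponent $(\gamma-1)(m+1)/2-\gamma(m+1)/2=-(m+1)/2$ has lost all dependence on $\gamma$ and is always below $-1$, so by your own summation rule the sum is of order $N^{(m+1)/2}$ and the total contribution is $O(1)$: no rate at all, and certainly not the claimed trichotomy. A second, smaller inaccuracy: the term $N^{-\gamma/2}$ in the statement does not come from the parabolic singularity inside the $B_k$-sum (that sum yields rates of order $N^{-q}$ or better); it comes from the final interval $[T_{N-1},T_N]$, where no smoothing is available and one uses only the Lipschitz property of $\phi$ together with $\Delta_{T_N}\asymp TN^{-\gamma}$, which is why the paper treats the last step separately. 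Once these two points are corrected, your argument, including the $\gamma>m-1$ treatment of the backward schemes via summability of $\Delta_{T_{k+1}}(T-T_{k+1})^{-p/2}$, matches the paper's proof.
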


Note that, in \eqref{Eq:ErrorforLB}, the control holds only at time $T$ although it holds at each step in \eqref{Eq:ErrorforSB}: this is because the boundary condition is Lipschitz only so that we have to wait for the smoothing effect to take place. 

We emphasize that the result still applies if we let the boundary condition $\phi: (y,w) \in \R^d\times \R \mapsto \phi(y,w)$ depend also on the law of the process $(X_t^x,\ 0\leq t \leq T)$. For example one can consider
\[\E\left[\phi(X_t^x,\E[\varphi_\phi(X_T^x)]) \right] \] 
for a given $\varphi_\phi\in C^{m+2}_b$ and where $\phi$ is Lipschitz in $w$ uniformly in $y$.

The algorithm can be easily adapted to the case of the particular dependence explored in \cite{buckdahn_mean-field_2009}:
$$V_i(t,y,\mu) = \la\mu_t,V_i(t,y,\cdot)\ra,\ i=0,\cdots,d,$$
and the result of Theorem \ref{MR} and Corollary \ref{Lipcase} remain valid. Note that in that case the uniform ellipticity \textbf{(LB)} has to be understood for the matrix $\left[\la\eta_t,V(t,y,\cdot)\ra\right] \left[\la\eta_t,V(t,y,\cdot)\ra\right]^*$ uniformly in $y$, $t$  in $\R^d\times \R^+$ and $\eta$ family of probability measures on $\R^d$.

\textbf{Results under a more general law dependence.}

As mentioned in the introduction, the algorithm may be modified to solve problems in a naturally extended framework. Let us precise the framework of this extension.

Let $\mF$ and $\mF'$ be two classes of functions, dense in the space of continuous functions that are zero at infinity. Let $d_{\mF}$ $d_{\mF'}$  be two distances as defined in \eqref{Eq:distance}. Recall that we suppose the vector fields $V_i$ $0\leq i \leq d$ that appear in \eqref{NLFBSDEG2} to be Lipschitz continuous with respect to $d_\mF$ and the driver $f$ to be Lipschitz continuous with respect to $d_{\mF'}$. Furthermore, let us suppose that there exists a unique solution $X_t^x,Y_t^x,Z_t^x$ to such a system and, as before, denote by $u$ the decoupling function defined as \eqref{Eq:DefU} given its existence.

Clearly, we need to modify Algorithm \ref{algo:KF} in the natural way to be used in this framework, that is, at each discretization time, we plug directly in the coefficients the cubature based law. 

In order to retrieve higher orders of convergence, we need to expand the McKean term that appears in the coefficients. In this extended case, we have to be careful when considering the forward algorithm with $q>1$: indeed, we must give sense to the expansion proposed at the definition of the functions $F_i$ $0\leq i \leq d$ in Algorithm \ref{algo:KF}. A good notion may be the one proposed in Section 7 of \cite{cardaliaguet_notes_2010}. To avoid further technicalities, we will consider here only the case with $q=1$, i.e., when no expansion is performed. 

With this definitions and observations in mind, we give the main result under a more general law dependence.

\begin{corollaire}\label{wasscase_backward}\label{wasscase}
Let $\mu_T$ be the marginal law of the forward process in \eqref{NLFBSDEG2} at time $T$. Let $m \geq 3$ be a given cubature order and $\hmu_T$ be the discrete measure given by the cubature tree $\mc{T}(1,1,m)$ defined by the algorithm \ref{algo:KF}. Then, there exist two positive constants $C_1$ and $C_2$, depending only on $T$, $d$ such that:
\begin{itemize}
\item If \textbf{(SB)} holds and $\mF$ (resp. $\mF'$) is the class of functions $\varphi$ in $C^\infty_b(\R^d,\R)$ (resp. $C^\infty_b(\R^d\times \R,\R)$) such that $||\varphi||_{\infty,\infty} \leq 1$, then
\begin{equation}\label{wasserror1}
d_\mF(\mu_T,\hmu_T) +\mE^1_u(k) + \Delta_{T_k}^{1/2} \mE^1_v(k)  \leq C_1 N^{-1}.
\end{equation}
\item If \textbf{(LB)} holds and $\mF$ (resp. $\mF'$) is the class of functions $\varphi$ in $C^1_b(\R^d,\R)$ (resp. $C^1_b(\R^d\times \R,\R)$) such that $||\varphi||_{1,\infty} \leq 1$, then
\begin{equation}\label{wasserror2}
d_\mF(\mu_T,\hmu_T)  + \mE^1_u(k) + \Delta_{T_k}^{1/2} \mE^1_v(k)  \leq C_2 N^{-1/2}.
\end{equation}
\end{itemize}
\end{corollaire}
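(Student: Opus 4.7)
The plan is to leverage the convergence results already proved for specific scalar functionals (Theorem \ref{MR} and Corollary \ref{Lipcase}) and promote them to uniform estimates over the duality classes $\mathcal{F}$ and $\mathcal{F}'$. First I would re-examine the forward iteration of Algorithm \ref{algo:KF} in this extended setting: since the coefficients now depend on the whole law through $d_{\mathcal{F}}$ and we take $q=1$, no expansion is performed and at each step we simply replace $\mu_{T_k}$ by $\hat\mu_{T_k}$ in the ODE driving the cubature paths. The Lipschitz property of $V_0,\ldots,V_d$ with respect to $d_{\mathcal{F}}$ then produces a one-step perturbation bounded by $d_{\mathcal{F}}(\mu_{T_k},\hat\mu_{T_k})$, which can be propagated by a Gronwall-type argument analogous to the one used in the smooth case.

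For the forward bound under \textbf{(SB)} with $\mathcal{F}=\{\varphi\in C^\infty_b:\|\varphi\|_{\infty,\infty}\le 1\}$, I would apply the proof of Theorem \ref{MR} to a generic $\varphi\in\mathcal{F}$ playing the role of the terminal test function. Because $\varphi$ has all derivatives uniformly bounded by $1$, the constant in \eqref{Eq:ErrorforSB} becomes independent of $\varphi$, and with $q=1$ and $m\geq 3$ the rate $[(m-1)/2]\wedge q = 1$ yields a $N^{-1}$ bound uniform in $\varphi\in\mathcal{F}$, hence on $d_{\mathcal{F}}(\mu_T,\hat\mu_T)$. Under \textbf{(LB)}, with $\mathcal{F}$ the unit ball in $C^1_b$, I would use Corollary \ref{Lipcase} with uniform subdivision $\gamma=1$: since $m\geq 3$ we have $\gamma<m-1$, so $L(\gamma,m)=N^{-1/2}$ and the rate $(\gamma/2)\wedge q=1/2$ matches. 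The uniform ellipticity of $VV^T$ provides the smoothing that lets us absorb the Lipschitz-only test functions, exactly as in the proof of \eqref{Eq:ErrorforLB}.

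For the backward estimates I would exploit the decoupling identity $Y_t=u(t,X_t)$, so that the joint law $\mu^{X,Y}_t$ is the pushforward of $\mu_t$ by $y\mapsto(y,u(t,y))$. Under either set of hypotheses, $u$ is Lipschitz in $y$ (smoothness under \textbf{(SB)}, parabolic regularization of the associated non-local semi-linear PDE \eqref{Eq:NonLinearPDE} under \textbf{(LB)}), hence the map $\mu\mapsto\mu^{X,Y}$ is Lipschitz from $(\mathcal{P},d_{\mathcal{F}})$ into $(\mathcal{P},d_{\mathcal{F}'})$. Combining this with the Lipschitz property of $f$ in $d_{\mathcal{F}'}$, the first-order backward recursion of Algorithm \ref{alg:FirstOrder} contracts modulo an additive term controlled by the forward error $d_{\mathcal{F}}(\mu_{T_k},\hat\mu_{T_k})$ already bounded above. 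Substituting the forward rates into the discrete Gronwall argument used in the proof of \eqref{Eq:rate1} gives $\mE^1_u(k)+\Delta_{T_k}^{1/2}\mE^1_v(k)\le C N^{-1}$ under \textbf{(SB)} and $C N^{-1/2}$ under \textbf{(LB)}.

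The main obstacle is the consistency analysis of the backward scheme in the generalized framework: the proof of Theorem \ref{MR} relied on the explicit form $\langle\mu_t,\varphi_f(\cdot,u(t,\cdot))\rangle$ to derive and use the PDE \eqref{Eq:NonLinearPDE}, whereas here the dependence on $\mu^{X,Y}$ is abstract. I would therefore verify, by revisiting Section \ref{Sec:MathTools}, that the decoupling function $u$ still solves a well-defined non-local semi-linear PDE parametrized by $(\mu_t)$ and enjoys sufficient regularity on $[0,T]\times\R^d$ to make the one-step local error bounds of Theorem \ref{MR} go through verbatim. The restriction to $q=1$ is precisely what sidesteps the more delicate question, raised in \cite{cardaliaguet_notes_2010}, of differentiating functionals of measures along cubature flows, keeping the argument self-contained.
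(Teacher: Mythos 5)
Your skeleton is the same as the paper's: rerun the forward error decomposition of Theorem \ref{MR} with the coefficient differences now measured by $d_\mF$, close with a Gronwall argument, and then feed the resulting forward rate into the backward analysis exactly where the McKean term enters Claims \ref{Cl:back1-2} and \ref{Cl:back1-3}; your backward paragraph (decoupling $Y_t=u(t,X_t)$, Lipschitz dependence of the driver on the measure, discrete Gronwall) is precisely what the paper does.

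There is, however, a concrete gap in the forward half. The one-step perturbation is not only $d_\mF(\mu_{T_k},\hmu_{T_k})$: the scheme freezes the measure over $[T_k,T_{k+1}]$ while the true dynamics use $\mu_t$, so each step also carries the consistency term $\int_{T_k}^{T_{k+1}} d_\mF(\mu_t,\mu_{T_k})\,dt$, and quantifying it is the crux of this corollary. The paper's proof pivots on the time-regularity estimates $d_\mF(\mu_t,\mu_{T_k})\le C(t-T_k)$ when $\mF$ is the unit ball of $C^\infty_b$ (It\^o's formula applied to $\varphi(X_t)$, uniformly over the class) and $d_\mF(\mu_t,\mu_{T_k})\le C(t-T_k)^{1/2}$ when $\mF$ is the unit ball of $C^1_b$ (via $\E|X_t-X_{T_k}|\le C(t-T_k)^{1/2}$); these give local errors of order $\Delta_{T_{k+1}}^{2}$, respectively $\Delta_{T_{k+1}}^{3/2}$, hence $N^{-1}$ and $N^{-1/2}$ after Gronwall. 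Your proposal never states these estimates; instead you import the rate formulas of Theorem \ref{MR} (through $q=1$) and Corollary \ref{Lipcase} (through $\gamma=1$ and $L(1,m)$). Those formulas were derived for the specific dependence $\la\mu_t,\varphi_i\ra$ with $\varphi_i\in C^\infty_b$ — the $q$-dependent term comes from the It\^o--Taylor expansion of Claim \ref{c4}, which is unavailable when the law dependence is abstract — so they cannot be invoked verbatim. Under \textbf{(SB)} your appeal to "$q=1$" is repaired by the first estimate above, applied uniformly over $\mF$; under \textbf{(LB)} the mis-attribution is more serious: the binding $N^{-1/2}$ comes from the coefficients seeing the law only through a Wasserstein-1-type distance (frozen-law error $\Delta^{3/2}$ per step, summing against the singular weights to $N^{-1/2}$), not from the Lipschitz terminal datum and the mesh, while the smoothing/cubature contribution you cite is an additional term that merely happens to be of the same order when $\gamma=1$. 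Once these two time-regularity estimates are inserted in place of Claim \ref{c4}, your Gronwall closure and the backward propagation go through as you describe.
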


We emphasize that, when $\mF = \{\varphi \in C^1_b(\R^d,\R),\ s.t.\ ||\varphi||_{1,\infty} \leq 1\}$, thanks to the Monge-Kantorovitch duality theorem, the distance $d_\mF$ is the so-called Wasserstein-1 distance.

\section{Numerical examples}\label{Sec:NumEx}

In this section, we illustrate the algorithm behavior by applying it to a toy model for which the exact solution is available. 

Consider the $d-$dimensional MKV-FBSDE on the interval $[0,1]$ with dynamics given by
\begin{align*}
dX_t & = \EX{ \sin(X_t)} dt + dB_t\\
-dY_t & = \left( \frac{\mathbf{1}\cdot \cos(X_t)}{2} + \EX{ (\mathbf{1}\cdot\sin(X_t))\exp(-Y_t^2)} \right)dt - Z_t\cdot dB_t,
\end{align*}
where $(B_t)_{0\leq t\leq 1}$ is a $d-$dimensional Brownian motion, $\mathbf{1}$ is a $d-$dimensional vector having each entry equal to one and the $\sin$ and $\cos$ functions are applied entry-wise. Moreover, suppose that $X_0=\mathbf{0}$. It is easily verified that a solution for the forward variable is $X=B$, and thanks to the uniqueness result this is the unique solution for the forward variable.\\

With respect to the backward part, take two different boundary conditions corresponding to the two considered set of assumptions \textbf{(SB)} and \textbf{(LB)}. 

\begin{description}
\item [(SB)] For $x\in \R^d$, we fix $\phi(x)=\mathbf{1}\cdot\cos(x)$. In this case, the solution to the backward part of the problem is
\[u(t,x) = \mathbf{1}\cdot \cos(x) \text{; and } v(t,x)=-\sin(x),\]
which clearly implies $Y_t = \mathbf{1}\cdot  \cos(X_t)$ and $Z_t = -\sin(X_t)$.
\item [(LB)] We fix the boundary condition to be $\phi(x):= \phi'(d^{-1/2}(\mathbf{1}\cdot x))$ where $\phi'$ is the triangular function defined for all $y\in \R$ as
\[\phi'(y)=\begin{cases} y+K & \text{ if } y \in (-K,0] \\ -y+K & \text{ if } y\in (0,K] \\ 0 & \text{otherwise.}\end{cases}\]
In this case, the solution is given by
$$u(t,x) = \E\left[\phi(X_T^x) + \int_t^T \left(\frac{\mathbf{1}\cdot \cos(X_t)}{2} + \EX{ (\mathbf{1}\cdot\sin(X_t))\exp(-Y_t^2)} \right)ds \right].$$
Basic properties of the Brownian motion imply that
\[u(t,x) = U(t,d^{-1/2}(\mathbf{1}\cdot x)) + (\mathbf{1}\cdot\cos(x)) \left[ \exp\left(\frac{t-1}{2}\right)-1 \right] \]
where
\begin{align*}
U(t,y) = & \sqrt{\frac{1-t}{2\pi}} \left[ \exp\left(\frac{-(K+y)^2}{2(1-t)}\right) +\exp\left(\frac{-(K-y)^2}{2(1-t)}\right) -2 \exp\left(\frac{-y^2}{2(1-t)}\right) \right] \\
& + (K+y) \left[F\left( \frac{-y}{\sqrt{1-t}}\right) - F\left( \frac{-K-y}{\sqrt{1-t}}\right) \right] + (K-y) \left[F\left( \frac{K-y}{\sqrt{1-t}}\right) - F\left( \frac{-y}{\sqrt{1-t}}\right) \right]
\end{align*}
and $F$ is the cumulative distribution function of the standard normal distribution. Evidently $v(t,x) = D_x u(t,x)$, and is defined for $t<1$. 
\end{description}

\subsection{Tests in dimension one}

Given that the law dependence already increases the dimension of the problem, we start by presenting some results when we fix $d=1$.

\subsubsection{Forward component}

To implement the forward variable, we use the cubature formulae of order 3 and 5 presented in \cite{lyons_cubature_2004}, which have paths support of size $\kappa=2$ and $\kappa=3$ respectively. Given the simple structure of the forward variable dynamics and the piecewise linear definition of the cubatures, we are able to solve explicitly the ODEs appearing during the tree construction. Hence there is no need to use any ODE solver.\\

In our first test we evaluate the weak approximation error of $X$ using the function $\phi$ as test function. Indeed we plot as error for the \SB\ case
\[\max\limits_{k=1,\ldots,N} \left|\la\hmu_{T_{k}}-\mu_{T_k}, \cos \ra  \right|,\] 
while for the \LB case, we plot
\[ \left|\la\hmu_{T_{k}}-\mu_{T_k}, \phi \ra  \right|,\] 
where $\phi$ is the defined triangular function with $K=0.6$. As was pointed out before, the difference between the kind of error we are observing for each case is justified as a smoothing effect is needed for the approximation to be valid under \LB.

\begin{figure}[h!]
\begin{center}
\includegraphics[width = 0.45\textwidth]{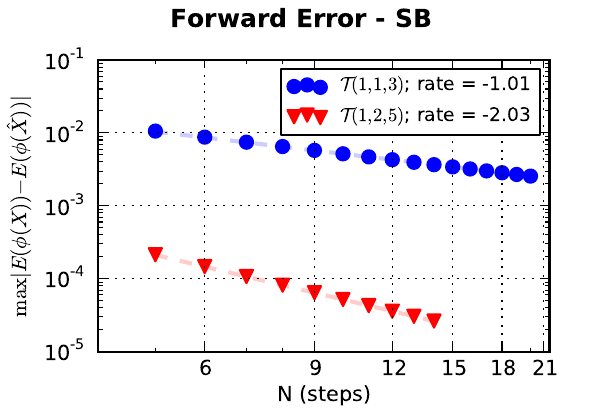}  
\includegraphics[width = 0.45\textwidth]{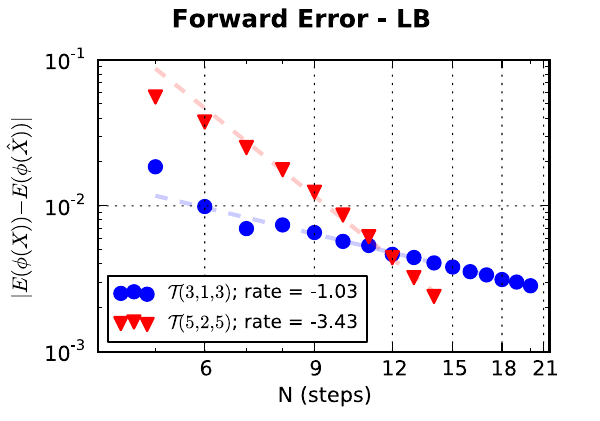}  
\caption{\small{Weak approximation of the forward variable: The calculated rates are the slope of a linear regression on the last 8 points.}}
\label{Fig:ErrorX}
\end{center}
\end{figure}

Figure \ref{Fig:ErrorX} shows the obtained rate of convergence where we have used the uniform discretization in the \SB\ case and the discretization with $\gamma=2$ for the \LB\ case. With the exception of the rate of convergence for the second order algorithm under \LB (which is actually better than the predicted one), the expected rates of convergence are verified in both cases.  \\

Moreover, under the smooth case, the benefit of using the higher order scheme is not only evident from a quickest convergence, but the error constant itself is smaller. This is an effect that depends on the particular example, but we remark it as it is interesting to notice that a higher order of convergence does not imply necessarily a higher initial constant.

\subsubsection{Backward component}
 \begin{figure}[h!]
\begin{center}
\includegraphics[width = 0.45\textwidth]{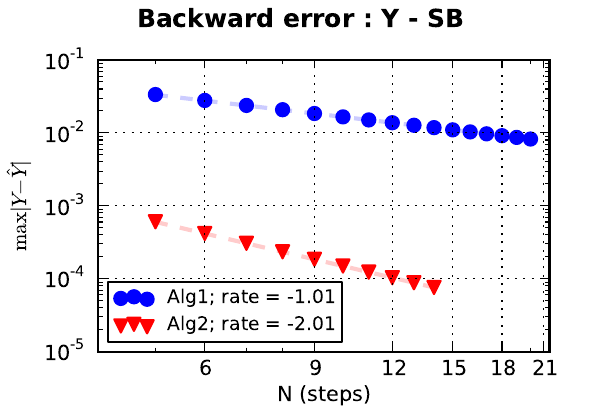}  
\includegraphics[width = 0.45\textwidth]{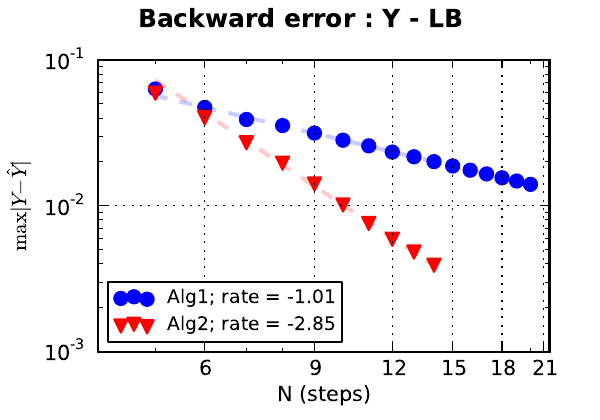}\\
\includegraphics[width = 0.45\textwidth]{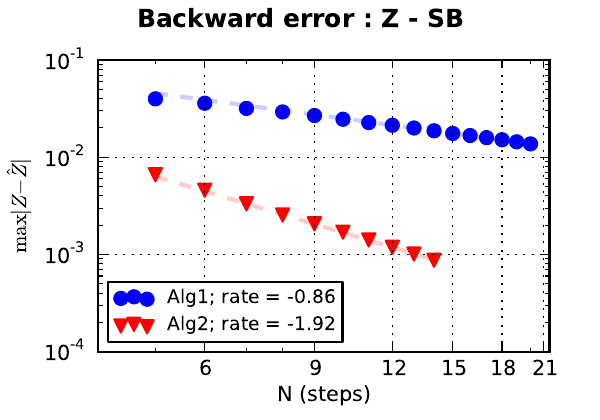}  
\includegraphics[width = 0.45\textwidth]{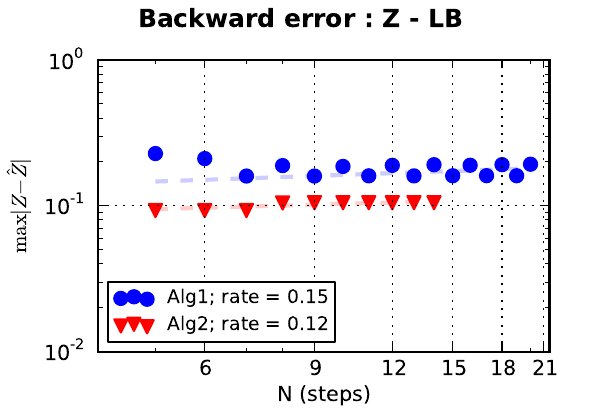} \\
\includegraphics[width = 0.45\textwidth]{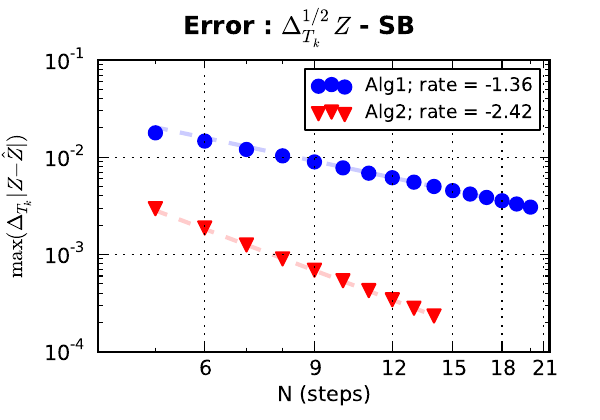}  
\includegraphics[width = 0.45\textwidth]{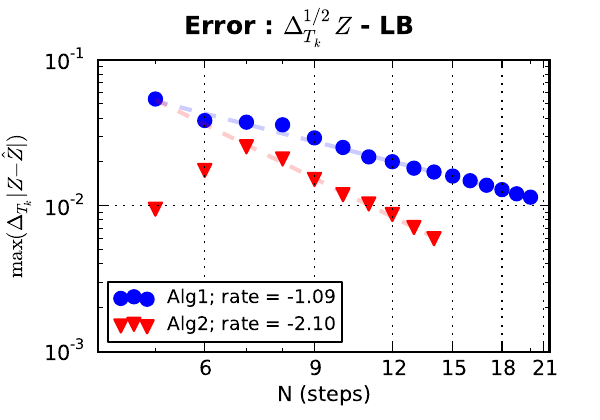}  
\caption{\small{Weak approximation of the backward variable: The calculated rates are the slope of a linear regression on the last 8 points.}}
\label{Fig:ErrorY}
\end{center}
\end{figure}

Let us check now the approximation of the backward variable. We evaluate numerically 
\[  \max\limits_{0\leq k \leq N-2 ;\ \pi \in \mc{S}_\kappa(k)} \left|  \hat{u}^1(T_k,\ApproxVarX{X}_{T_k}^\pi) -  u(T_k\ApproxVarX{X}_{T_k}^\pi) \right| \quad \text{and} \quad  \max\limits_{0\leq k \leq N-2 ;\ \pi \in \mc{S}_\kappa(k)} \left|  \hat{u}^2(T_k,\ApproxVarX{X}_{T_k}^\pi) -  u(T_k\ApproxVarX{X}_{T_k}^\pi) \right|;\] 
for both the (SB) and (LB) cases, where we fix $K=0.6$ for the latter.

%
The specific structure of our examples allows us to obtain a second order convergence scheme with a cubature of order only 5. Indeed, in such a case, the terms in front of the leading rate of convergence on the cubature error estimate (cf Claim \ref{Cl:back2-2}) are identically 0. Given that the order 5 cubature induces a lower complexity, it is simpler to carry out simulations for a larger number of steps.  

As can be appreciated from the two uppermost plots in Figure \ref{Fig:ErrorY}, the expected rates of convergence for both algorithms are verified under the smooth and Lipschitz conditions. Just as we remarked in the forward approximation, solving the backward variable in the smooth case with the higher order scheme has the double benefit of better rate of convergence and smaller constant. As one would expect, due to the use of higher order derivatives, this is no longer true for the Lipschitz case.\\

It is interesting to look at the behavior of the other backward variable, $Z$. We look first at an error of the type
\[ \max\limits_{0\leq k \leq N ;\ \pi \in \mc{S}_\kappa(k)} \left|  \hat{v}^1(T_k,\ApproxVarX{X}_{T_k}^\pi) -  v(T_k\ApproxVarX{X}_{T_k}^\pi) \right|\quad \text{and} \quad  \max\limits_{0\leq k \leq N ;\ \pi \in \mc{S}_\kappa(k)} \left|  \hat{v}^2(T_k,\ApproxVarX{X}_{T_k}^\pi) -  v(T_k\ApproxVarX{X}_{T_k}^\pi) \right|.\] 

 The two plots in the middle of Figure \ref{Fig:ErrorY} are concerned with these errors. Although nice convergence is obtained in the smooth case, this is no longer true for the \LB case, where the error stagnates. As will be clear from the analysis, this is a consequence of the singularity appearing at the boundary on the control of derivatives in this case.  Hence, a more adequate error analysis considers errors given by
\[ \max\limits_{0\leq k \leq N ;\ \pi \in \mc{S}_\kappa(k)} \Delta_{T_k}^{1/2}  \left|  \hat{v}^1(T_k,\ApproxVarX{X}_{T_k}^\pi) -  v(T_k\ApproxVarX{X}_{T_k}^\pi) \right|\quad \text{and} \quad  \max\limits_{0\leq k \leq N ;\ \pi \in \mc{S}_\kappa(k)} \Delta_{T_k}^{1/2} \left|   \hat{v}^2(T_k,\ApproxVarX{X}_{T_k}^\pi) -  v(T_k\ApproxVarX{X}_{T_k}^\pi) \right|.\] 

The expected rate of convergence of this type of error is, respectively for $\hv^1$ and $\hv^2$, of the same order of the order of the error of $\hu^1, \hu^2$ with respect to $u$. As shown in the bottommost plots in Figure \ref{Fig:ErrorY}, the numerical tests for the \SB\ and \LB\ cases reflect the expected rates. \\

\subsection{Tests in higher dimensions}

We evaluate as well the algorithm using our test models $\SB, \LB$ with dimensions $d=2$ and $d=4$. For these tests, we evaluate only the first order schemes and use the   3-cubature formulae presented in \cite{gyurko_efficient_2011} which have supports of size $\kappa=4$ and $\kappa=6$ respectively. \\

Figure \ref{Fig:ErrorMultidim} shows that, just as is in the one-dimensional case, the announced rates of convergence for the forward and backward variables are verified. Note that for the particular chosen examples, the error value changes just slightly with dimension. \\

 \begin{figure}[h!]
\begin{center}
\includegraphics[width = 0.45\textwidth]{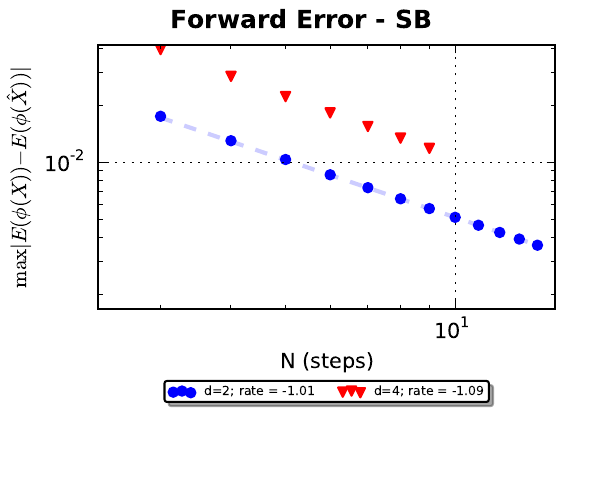}  
\includegraphics[width = 0.45\textwidth]{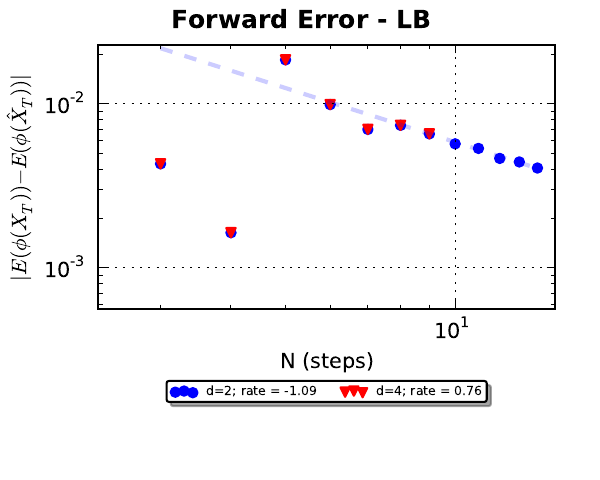}\\
\includegraphics[width = 0.45\textwidth]{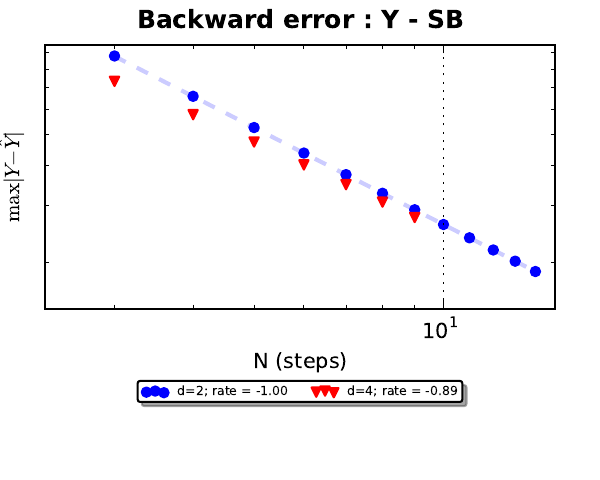}  
\includegraphics[width = 0.45\textwidth]{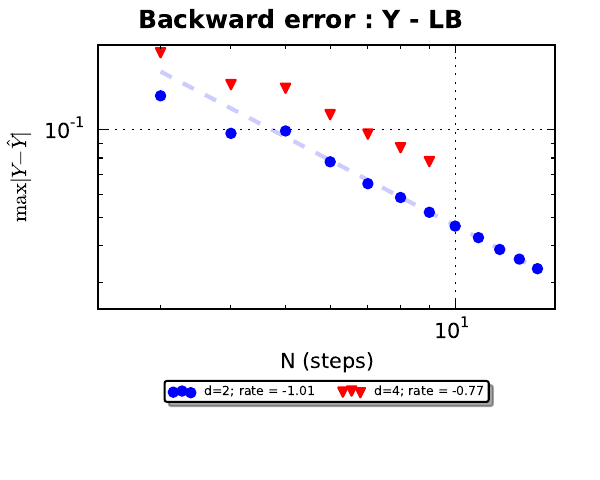}\\
\includegraphics[width = 0.45\textwidth]{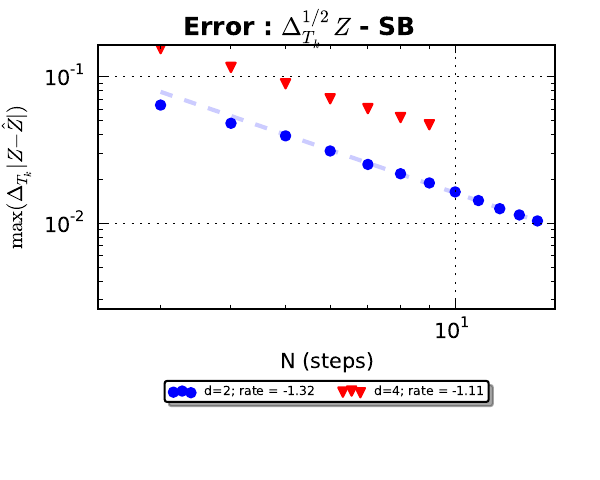}  
\includegraphics[width = 0.45\textwidth]{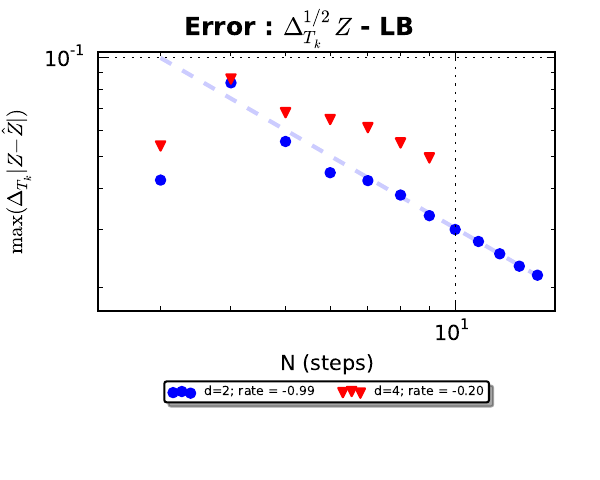}  
\caption{\small{Results in dimension 2 and 4.}}
\label{Fig:ErrorMultidim}
\end{center}
\end{figure}

The case of dimensions 2 and 4  show one of the current limitations of the method: its complexity grows, in general, exponentially both in terms of the number of iterations and the dimension of the problem. Indeed, considering once again the 3-cubature formula, we have that in general the number of nodes in the tree is 
\[ \sharp(nodes) = \frac{(2d)^{n-1}-1}{2d-1},\]
with the obvious effects on memory management and execution time. We remark that for some particular cases, the complexity can be radically lower. For instance, under the case of constant drift and diffusion coefficients and smooth boundary conditions, using symmetric cubature formulae (as we did here) leads to a kind of ``auto-pruning'' of the cubature tree leading to complexity grow of the form
\[ \sharp(nodes) = \sum_{i=1}^{n} i^{d},\]
i.e. polynomial in $n$ with the order of the polynomial depending on the dimension $d$. \\

\section{A class of control problems in a mean field environment} \label{Sec:ControlMeanField}

In this section, we show that equation \eqref{NLFBSDE} appears when solving a class of control problems inspired from the theory of mean field games but designed in such a way that the dynamics of the controlled process have no influence on the mean field environment.

For the sake of illustration, consider for instance the problem of optimization of an issuer having a large portfolio of credit assets inspired in the framework presented in \cite{bush_stochastic_2011}. One of the methods used to model credit asset dynamics is the so called structural model (see \cite{bielecki_credit_2009} for a review on credit risk models). Under this model, we assume that a credit default is triggered when the value of the corresponding credit asset is below a certain threshold. In the original Merton setup, the default may only be triggered at a certain fixed maturity time $T$. In a rather more realistic view, the default is triggered the first time the credit asset is below the threshold. 

We assume that the credit assets in the basket are small and homogeneous (for example, we suppose they belong to the same economic sector) so that their value is modeled by SDEs with the same volatility and drift function terms. To simplify, we will consider the simpler Merton model. Moreover, in order to account for sector-wise contagion effects, we suppose there is a mean field type dependence in the dynamics.  In addition to the credit assets, we suppose the issuer has a market portfolio used by the issuer to backup the credit risk, for example to comply with credit risk regulations, or to provide liquidity to its credit branch. Then, the value of the position of the issuer position is modeled by an SDE with coefficients depending on the contribution of all credit assets. The objective of the control problem is to maximize the value of the issuer position.

We will formalize mathematically a generalized version of the presented example. For this, we introduce a system in which a \emph{marked} particle (the issuer in our example) with a controlled state variable $\Xi(\alpha,M)$ is immersed in an \emph{environment} of $M$ interacting particles (the credit assets in our example) with state variables $\xi^1,\ldots,\xi^M$, and which dynamics are given by 
\begin{equation}\label{sysparticule}
\left\lbrace \begin{array}{l}
d \xi^1_t = b(t,\xi^1_t,\frac{1}{M} \sum_{i=1}^M\delta_{\xi^i_t} )dt + \sigma(t,\xi^1_t,\frac{1}{M} \sum_{i=1}^M\delta_{\xi^i_t} )dB^1_t\\
\quad \vdots \qquad \qquad\qquad\qquad  \vdots \\
d \xi^M_t = b(t,\xi^M_t,\frac{1}{M} \sum_{i=1}^M\delta_{\xi^i_t} )dt + \sigma(t,\xi^M_t,\frac{1}{M} \sum_{i=1}^M\delta_{\xi^i_t} )dB^M_t\\
d\Xi_t(\alpha,M) = b^0(t,\xi^1_t, \Xi_t(\alpha,M), \frac{1}{M} \sum_{i=1}^M\delta_{\xi^i_t},\mu_t,\alpha_t )dt + \sigma^0(t,\xi^1_t,\Xi_t(\alpha,M), \frac{1}{M} \sum_{i=1}^M\delta_{\xi^i_t},\mu_t)dW_t\\
\xi^1_0 = \ldots \xi^M_0  = \xi_0, \quad \Xi_0(\alpha,M) = \Xi_0
\end{array} \right.
\end{equation}
where $\xi_0, \Xi_0$ belong to $\RR^{d}$, $(\alpha_t,\ t \geq 0)$ is a progressively measurable process with image in $A \subset \R$, $B_t^1,\ldots,B_t^M $ are $M$ independent Brownian motions and $(W_t,\ t\geq 0)$ is a $d$-dimensional Brownian motion possibly correlated with $(B_t^1,\ldots,B_t^M )^T$. Note that, in this framework, the marked player does not influence the dynamics of the other players.
For a large number of environment players, the system is approximated by the McKean-Vlasov system
\begin{equation}\label{sysparticulelimitte}
\left\lbrace \begin{array}{l}
d \xi_t = b(t,\xi_t,\mu_t )dt + \sigma(t,\xi_t,\mu_t )dB_t\\
d\bar{\Xi}_t(\alpha) = b^0(t,\xi_t,\bar{\Xi}_t(\alpha),\mu_t,\alpha_t )dt + \sigma^0(t,\xi_t,\bar{\Xi}_t(\alpha),\mu_t )dW_t\\
\xi_0  = \xi, \quad \bar{\Xi}_0(\alpha) = \Xi_0
\end{array} \right.
\end{equation}
where $\mu_t$ is the law of $\xi_t$ that we will assume in the following to be fixed. Assume that the marked player is interested in minimizing the cost functional
\[J(t,\xi_0,\Xi_0,\alpha)= \E\left[g\left(\xi^{t,\xi_0}_T,\bar{\Xi}^{t,\Xi_0}_T(\alpha),\mu_T\right) + \int_0^T f(s,\xi_s^{t,\xi_0},\bar{\Xi}^{t,\Xi_0}_s(\alpha),\mu_s) ds \right],\]
for $\alpha \in \mathcal{A}$, the set of all progressively measurable process $\alpha = (\alpha_t,\ t\geq 0)$ valued in $A$ (the maximization case is available up to a change of sign). Suppose that we want to compute the optimal value function $u(t,\xi_0,\Xi_0) = \inf\{J(t,\xi_0,\Xi_0,\alpha),\  \alpha \in \mathcal{A}\} $. Then, we know that under appropriate assumptions, $u$ can be obtained as the solution of the following Hamilton Jacobi Bellman equation on $[0,T]\times \R \times \R^d$
\begin{align} 
0 = & \p_t u(t,x,\bar{x}) + \frac{1}{2} {\rm Tr}( \bar{a} D^2_{x,\bar{x}} u (t,x,\bar{x})  ) + b(t,x,\mu_t) D_x u (t,x,\bar{x}) +  H(t,x,\bar{x}, D_x u,\mu_t).   \label{Eq:HJB}
\end{align}
Here $H$ is the Hamiltonian
\[ H(t,x,\bar{x}, z,\mu_t) =  \inf\limits_{\alpha \in \mathcal{A}} \left[ b^0(t,x,\bar{x},\mu_t,\alpha) z + f(t,x,\bar{x},\mu_t)  \right]\]
and 
\[\bar{a} = \left[\begin{array}{cc} \sigma \sigma^T & \sigma \rho (\sigma^0)^T \\ \sigma^0 \rho^T \sigma^T & \sigma^0 (\sigma^0)^T \end{array} \right], \qquad \rho = [B,W],\]
where $[\cdot,\cdot]$ stands for the quadratic variation.

We will not discuss the resolvability of the HJB equation (see e.g. \cite{fleming_controlled_2006} or \cite{pham_continuous-time_2009} for a partial review). Given the existence of an optimal control, we can interpret  \eqref{Eq:HJB} from a probabilistic point of view: we have that $u(t,x,\bar{x}) = Y_t^{t,x,\bar{x}}$ where $Y^{t,x,\bar{x}}$ is given by the MKV-FBSDE
\begin{equation}\label{sysparticulelimitteEDP}
\left\lbrace \begin{array}{l}
dX^{t,x,\bar{x}}_s = b(s,X^{t,x,\bar{x}}_s,\mu_s)ds + \sigma(s,X^{t,x,\bar{x}}_s,\mu_s) dB_s\\
d\check{X}^{t,x,\bar{x}}_s = \sigma^0(s,X^{t,x,\bar{x}}_s,\check{X}^{t,x,\bar{x}}_s,\mu_s) dW_s\\ 
-dY^{t,x,\bar{x}}_s = H(s,X^{t,x,\bar{x}}_s,\bX^{t,x,\bar{x}}_s,\bZ^{t,x,\bar{x}}_s,\mu_s) - \bZ^{t,x,\bar{x}}_s dW_s + Z^{t,x,\bar{x}}dB_s\\
X^{t,x,\bar{x}}_t = x, \quad \bX_t^{t,x,\bar{x}} = \bar{x}, \quad Y^{t,x,\bar{x}}_0 = g(X^{t,x,\bar{x}}_T,\bX_T^{t,x,\bar{x}},\mu_T).
\end{array} \right.
\end{equation}

The reader may object that the Hamiltonian $H$ does not satisfy the boundedness condition we have assumed for the analysis of the algorithm (bounded with bounded derivatives w.r.t. the variable $z$). However, some relatively mild assumptions guarantee that the first derivative term $\bZ$ will be bounded. This is almost direct when the boundary condition $g$ is bounded and smooth and proved in \cite{crisan_sharp_2012} when $g$ is Lipschitz and the diffusion matrix uniformly elliptic. Hence, given an estimate on this quantity, one may introduce a modified system in which we replace in the function $(Z,\bZ)$ by $(\psi(Z),\bar{\psi}(\bZ))$, where $\psi, \bar{\psi}$ are truncation functions used to make the value of $Z,\bZ$ satisfy its known estimates, as in \cite{richou_numerical_2011} (if the estimate is not explicitly known, a sequence of functions approximating the identity may be used as in \cite{imkeller_path_2010}, but some additional work would be needed to account for the truncation error). In both cases,
 the truncated problem will then satisfy the needed assumptions and may be solved with the presented Algorithm \ref{algo:KF}, \ref{alg:FirstOrder}.\\
 
 \textbf{\emph{Remark.}} Let us just comment some structural assumptions on our class of exemples. We first emphasize that we do not allow the marked player in \eqref{sysparticule} to influence the dynamic of the other player : this allows to make the forward component in \eqref{sysparticulelimitteEDP} independant on the law of the backward component. Secondly, the diffusion part of the marked player is not allowed to be controlled : this is the reason why the position of the backward variable does not appear in the forward component in \eqref{sysparticulelimitteEDP}. Then, the second component in \eqref{sysparticulelimitteEDP} has no drift : this is because the drift part of the second component in \eqref{sysparticulelimitte} is included in the Hamiltonian in \eqref{sysparticulelimitteEDP}. Finally, we emphasize that here we do not solve the stochastic control problem but we only compute the value of the optimal cost.

\section{Preliminaries}\label{Sec:Preliminaire}

In the following we set a subdivision $T_0=0< \cdots <T_N=T$ of $[0,T]$.\\

\textbf{Artificial dynamics.}
We denote by $\s$ the mapping $s \mapsto \s= T_{k}$ if $s \in [T_k,T_{k+1})$, $k\in\{0,\cdots ,N-1\}$. \\

For any family of probability measures $\eta^1$ and $\eta^2$, one denotes by $P^{\eta^1}$ and  $\tilde{P}^{\eta^2}$ the operators such that, for all $t<s$ in $[0,T]$, for all measurable function $g$ from $\R^d$ to $\R$ and for all $y$ in $\R^d$:
\begin{equation*}
P^{\eta^1}_{t,s}g(y)=\E[g(X_s^{t,y,\eta^1})] \text{ and } \tilde{P}^{\eta^2}_{t,s}g(y)=\E[g(\tX_s^{t,y,\eta^2})] 
\end{equation*}
and $(\mL^{\eta^1}_s)_{t\leq s \leq T }$ and $(\tilde{\mL}^{\eta^2}_{s})_{t\leq s \leq T }$ their infinitesimal generator, where for all $g$ in $C^2(\R^d,\R)$
\begin{equation} \label{Eq:CondimL}
\mL ^{\eta^1}_sg(y):= V_0 (s,y,\la\eta^1_s,\varphi_0\ra) \cdot D_y g(y)+\frac{1}{2}{\rm Tr}[VV^T(s,y,\la\eta^1_s,\varphi\ra)D^2_yg(y)]
\end{equation}
and by definition $\tilde{\mL}^{\eta^2}_{s} = \mL^{\eta^2}_{\s}$. Here $X^{t,y,\eta^1}$ and $\tX^{t,y,\eta^2}$ are the respective solutions of
\begin{eqnarray}
&&dX_s^{t,y,\eta^1}=  \sum_{i=0}^d V_i\left(s,X^{t,y,\eta^1}_s,\la\eta^1_s,\varphi_i\ra\right) dB^i_s,\quad X_t^{t,y,\eta^1} =y,\label{Eq:ArtifDyna1}\\
&&d\tX_s^{t,y,\eta^2}=\sum_{i=0}^d V_i\left(s,\tX^{t,y,\eta^2}_s,\sum_{p=0}^{q-1}[(t-\t)^p/p!]\la \eta^2_{\s},(\tilde{\mL}^{\eta^2})^p\varphi_i\ra\right) dB^i_t,\quad \tX_t^{t,y,\eta^2} =y\label{Eq:ArtifDyna2}.
\end{eqnarray}

Finally, let us define the operator associated to the cubature measure, $Q^{\hmu}$, as 
\begin{equation}\label{Eq:DefOperatorQ}
Q^{\hmu}_{t,s}g(y)=\E_{\Q_{t,s}}[g(\tX_s^{t,y,\hmu})] 
\end{equation}
for all $t<s$ in $[0,T]$ for all $y$ in $\R^d$ and for all measurable function $g$ from $\R^d$ to $\R$. Note that for all $k$ in $\{1,\ldots,N\}$:
\[Q_{0,T_k}^{\hmu} g(x) = \la \hmu_{T_k}, g \ra.\]

\textbf{Multi-index (2).}  Let $\mc{M}$ be defined by \eqref{Eq:defM}. Let $\beta \in \mc{M}$. We define $|\beta| = l$ if  
$\beta= (\beta_1,\ldots,\beta_l)$, $|\beta|_0 := \text{card}\{i:\beta_i =0\}$ and $\|\beta\| := |\beta|+|\beta|_0$. Naturally $|\emptyset|=|\emptyset|_0 = \|\emptyset\|=0$. For every $\beta\neq \emptyset$, we set $-\beta := (\beta_2,\ldots,\beta_l)$ and $\beta-:= (\beta_1,\ldots,\beta_{l-1})$. We set $\beta^+$ the multi-index obtained by deleting the zero components of $\beta$.\\

We will frequently refer to the set of multi-indices of degree at most $l$ denoted by $\mc{A}_l:=\{\beta \in \mc{M}: \| \beta\|\leq l\}$. We define as well its \emph{frontier set}  \(\p\mc{A}:=\{\beta \in \mc{M}\setminus\mc{A}: -\beta\in \mc{A}\}.\) We can easily check that $\p\mc{A}_l \subset \mc{A}_{l+2}\setminus \mc{A}_l$.\\

\textbf{Directional derivatives.}
For notational convenience, let us define the second order operator 
\[\mc{V}_{(0)} := \partial_t + \mc{L},\]
and for $j=\{1,\ldots,d\}$ the operator 
\[\mc{V}_{(j)} := V_j\frac{\partial}{\partial x_j}.\]
where, as announced in the notation section, we do not mark explicitly the time, space and law dependence. For every $\|\beta\|\leq l$ let us define recursively
\begin{equation}
\mc{V}_\beta g:= \begin{cases} g & \text{ if } |\beta| = 0 \\ V_{\beta_1}  \mc{V}_{-\beta}g & \text{ if } |\beta| > 0, \end{cases}
\end{equation}
provided that $g:[0,T]\times \R^d \mapsto \R$ is smooth enough. Hence, for $n\in \NN$ we denote by $\mc{D}^n_b$, the space of such functions $g$ for which  $\mc{V}_{\beta} g$ exists and is bounded for every $\beta \in \mc{A}_n$. 
For any function $g$ in $\mc{D}_b^n$, we set for all $\beta\in \mc{A}_n$,  \[D_\beta g := \frac{\p}{\p y_{\beta_1}}\cdots\frac{\p}{\p y_{\beta_{|\beta|}}}g,\]
where $[\p/\p y_0]$ must be understood as $[\p/\p t]$. \\

\textbf{Iterated integrals.}
For any multi-index $\beta$ and adapted process $g$ we define for all $t<s \in [0,T]$ the \emph{multiple It{\^o} integral} $I_{\beta}^{t,s}[g]$ recursively by
\[I^{t,s}_{\beta}(g) = \begin{cases} g(\tau) & \text{ if } |\beta| = 0 \\ \int_t^s I^{\rho,r}_{\beta-}(g)dr & \text{ if } |\beta|>0 \text{ and } \beta_l = 0 \\ \int_t^s I_{\beta-}^{t,r}(g)d{B}^{\beta_l}_r & \text{ if } |\beta|>0 \text{ and } \beta_l > 0 \end{cases}\]
We will write $I^{t,s}_{\beta}:=I^{t,s}_{\beta}(1)$.\\

The previous notation is very convenient to introduce an It{\^o}-Taylor expansion, that is an analogue of Taylor formula when dealing with It{\^o} processes. The proof follows simply by repeated iteration of It{\^o}'s lemma, and may be found (without the law dependence) in \cite{kloeden_numerical_1992}.
\begin{lemme}
\label{Theo:ItoTaylorExpansion}
Let $t<s \in [0,T]$ and $y\in \R^d$. Let $n\in \NN^*$ and let $g$ in $D_b^n$ Then, for each family of probability measures $\eta$ on $\R^d$, we have have an \emph{Itô-Taylor expansion of order $n$}, that is
\[g(s,X_{s}^{t,y,\eta}) = g(t,y) + \sum_{\beta \in \mc{A}_n} \mc{V}_\beta g(t,y) I_{\beta}^{t,s} + \sum_{\beta \in \p\mc{A}_n} I_{\beta}^{t,s}[\mc{V}_{\beta}g(.,X_{.}^{t,y,\eta})]\]
where $(X_{s}^{t,y,\eta},\ t\leq s\leq T)$ is the solution of \eqref{Eq:ArtifDyna2}.
\end{lemme}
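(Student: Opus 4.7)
The proof is an iterated application of Itô's formula, following the classical pattern in Kloeden--Platen \cite{kloeden_numerical_1992}. The only element that differs from the standard setup is the presence of the measure family $\eta$ in the coefficients of \eqref{Eq:ArtifDyna1}, but since $\eta$ is prescribed in advance (it is not the law of $X^{t,y,\eta}$ itself), the coefficients are deterministic functions of $(s,x)$ and Itô's lemma applies without modification.

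First I would carry out the base step: a single application of Itô's formula to $r \mapsto g(r, X_r^{t,y,\eta})$ on $[t,s]$. With the notational conventions of the paper, $\mc{V}_{(0)} = \partial_t + \mc{L}$ gathers all $dr$-contributions (drift plus Itô correction) while $\mc{V}_{(j)} = V_j\cdot\nabla$ is the $dB^j$-coefficient for $j\geq 1$, so one obtains
\[
g(s, X_s^{t,y,\eta}) \;=\; g(t,y) + \sum_{j=0}^d I_{(j)}^{t,s}\!\left[\mc{V}_{(j)} g(\cdot, X_\cdot^{t,y,\eta})\right],
\]
which is exactly the claimed expansion at order $n=0$ (with $\mc{A}_0 = \{\emptyset\}$ and $\partial\mc{A}_0 = \{(0),(1),\ldots,(d)\}$). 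To iterate, I apply the same identity to each integrand $\mc{V}_\beta g(r, X_r^{t,y,\eta})$ whose multi-index satisfies $\|\beta\|\leq n$. Using the recursion $\mc{V}_{(j)*\beta} = \mc{V}_{(j)}\mc{V}_\beta$ that is built into the definition of $\mc{V}_\beta$, this yields the key single-step expansion
\[
I_\beta^{t,s}\!\left[\mc{V}_\beta g(\cdot, X_\cdot^{t,y,\eta})\right] \;=\; \mc{V}_\beta g(t,y)\,I_\beta^{t,s} + \sum_{j=0}^d I_{(j)*\beta}^{t,s}\!\left[\mc{V}_{(j)*\beta} g(\cdot, X_\cdot^{t,y,\eta})\right].
\]

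The remainder is bookkeeping. I expand every integral whose multi-index still lies in $\mc{A}_n$ and I freeze any integral whose multi-index lies outside $\mc{A}_n$. Each expansion strictly increases $\|\beta\|$ (by $1$ if $j\neq 0$, by $2$ if $j=0$), and $\mc{A}_n$ is finite, so the procedure terminates in finitely many steps. Closure of $\mc{A}_n$ under the suffix map $\beta \mapsto -\beta$ (which holds since $\|-\beta\|\leq\|\beta\|$) guarantees that every $\beta\in\mc{A}_n\setminus\{\emptyset\}$ is indeed reached, by starting from its last entry $(\beta_{|\beta|})$ and prepending one index at a time; thus the frozen-index set is exactly $\mc{A}_n\setminus\{\emptyset\}$, contributing $\sum_{\beta\in\mc{A}_n\setminus\{\emptyset\}}\mc{V}_\beta g(t,y)\,I_\beta^{t,s}$. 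Conversely, every unexpanded integral has multi-index of the form $\beta = (j)*\beta'$ with $\beta'\in\mc{A}_n$ and $\beta\notin\mc{A}_n$, hence $-\beta = \beta'\in\mc{A}_n$, i.e.\ $\beta\in\partial\mc{A}_n$, which matches the claimed residual set.

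The only ingredient still to verify is that Itô's formula is legitimately applicable at each iteration, that is, $\mc{V}_\beta g\in C^{1,2}([0,T]\times\R^d)$ whenever $\|\beta\|\leq n$. This is precisely what $g\in\mc{D}_b^n$ together with the smoothness of the vector fields $V_i$ provides: since $\mc{V}_{(j)*\beta} g$ exists and is bounded for every $(j)*\beta\in \mc{A}_n$, the first time derivative and second space derivatives of $\mc{V}_\beta g$ that Itô's lemma requires are automatically available and bounded. I do not anticipate any genuine obstacle; the sole delicate point is the combinatorial identification of the residual index set as $\partial \mc{A}_n$, which follows cleanly from the suffix-closure of $\mc{A}_n$ described above.
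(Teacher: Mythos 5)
Your argument is correct and is exactly the route the paper takes: the paper proves this lemma by simply invoking repeated iteration of It\^o's formula as in Kloeden--Platen \cite{kloeden_numerical_1992} (noting that the fixed family $\eta$ makes the coefficients deterministic time--space functions), which is precisely the iteration and frontier-set bookkeeping you spell out. Your identification of the frozen indices as $\mc{A}_n\setminus\{\emptyset\}$ and of the residual indices as $\p\mc{A}_n$ via suffix-closure matches the intended statement.
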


The following lemma is a particular case of a result in \cite{kloeden_numerical_1992}. It follows from integration by parts formula and expectation properties.
\begin{lemme}\label{lemmathatgivestheorderoftheiterateditointegral}
Let $\beta \in \mc{M}$, and let $t_1 <t_2\in [0,T]$. Then for any bounded and measurable functions $g_1$ and $g_2$ in $[t_1,t_2]$ there exists a constant depending only on $\beta$ and $i$ such that
\begin{align*}
\cEX{I^{t_1,t_2}_{\beta}(g_1)I^{t_1,t_2}_{(i)}(g_2)}{\F_{t_1}} \leq  \one{(\beta)^+=i} C(\beta,i) (t_2-t_1)^{(||\beta||+1)/2} \sup_{t_1\leq s \leq t_2}|g_1(s)| \sup_{t_1\leq s \leq t_2}|g_2(s)|,\\
\cEX{I^{t_1,t_2}_{\beta}(g_1)I^{t_1,t_2}_{(0,i)}(g_2)}{\F_{t_1}} \leq  \one{(\beta)^+=i} C'(\beta,i) (t_2-t_1)^{(||\beta||+3)/2} \sup_{t_1\leq s \leq t_2}|g_1(s)| \sup_{t_1\leq s \leq t_2}|g_2(s)|.
\end{align*}

\end{lemme}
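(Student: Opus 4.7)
The plan is to prove the first bound by induction on $|\beta|$ via It\^o's product formula, and then to deduce the second bound by a one-line reduction.

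The base case $|\beta|=0$ is immediate: $I^{t_1,t_2}_{\emptyset}(g_1)$ is $\F_{t_1}$-measurable (equal to $g_1(t_1)$) while $I^{t_1,t_2}_{(i)}(g_2)$ and $I^{t_1,t_2}_{(0,i)}(g_2)$ are mean-zero stochastic integrals (for $i\geq 1$), so both conditional expectations vanish while the indicator $\one{\emptyset=(i)}$ is $0$. For the inductive step, write $\beta=\beta_{-}*j$ with $j=\beta_{|\beta|}$, set $N_s:=I^{t_1,s}_{\beta}(g_1)$ and $M_s:=I^{t_1,s}_{(i)}(g_2)$, and apply It\^o's product formula
\[ N_{t_2}M_{t_2} = \int_{t_1}^{t_2} N_s\,dM_s + \int_{t_1}^{t_2} M_s\,dN_s + \langle N,M\rangle_{t_1,t_2}. \]
Under conditional expectation the $dM$-integral drops (martingale under boundedness), leaving two cases.

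If $j=0$ then $N$ has finite variation, so $\langle N,M\rangle=0$ and Fubini gives
\[ \cEX{N_{t_2}M_{t_2}}{\F_{t_1}} = \int_{t_1}^{t_2}\cEX{I^{t_1,s}_{\beta_-}(g_1)\,M_s}{\F_{t_1}}\,ds. \]
The induction hypothesis bounds the integrand by $\one{(\beta_-)^+=i}\,C(\beta_-,i)(s-t_1)^{(\|\beta_-\|+1)/2}\sup|g_1|\sup|g_2|$, and one time integration produces the desired $(t_2-t_1)^{(\|\beta_-\|+3)/2}=(t_2-t_1)^{(\|\beta\|+1)/2}$ (using $\|\beta\|=\|\beta_-\|+2$) with the correct indicator $\one{\beta^+=i}$. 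If instead $j\geq 1$, only the quadratic covariation survives:
\[ \cEX{N_{t_2}M_{t_2}}{\F_{t_1}} = \delta_{ij}\int_{t_1}^{t_2} g_2(r)\,\cEX{I^{t_1,r}_{\beta_-}(g_1)}{\F_{t_1}}\,dr. \]
A short auxiliary induction on $|\beta_-|$, using the martingale property of It\^o integrals and Fubini, shows that $\cEX{I^{t_1,r}_{\beta_-}(g_1)}{\F_{t_1}}=0$ unless $(\beta_-)^+=\emptyset$, in which case it is an iterated time integral bounded by $(r-t_1)^{|\beta_-|}\sup|g_1|/|\beta_-|!$. The combined indicator $\delta_{ij}\one{(\beta_-)^+=\emptyset}=\one{\beta^+=i}$ together with one more time integration yields the exponent $|\beta_-|+1=(\|\beta_-\|+2)/2=(\|\beta\|+1)/2$ (using $\|\beta\|=\|\beta_-\|+1$), closing the induction.

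For the second inequality, observe that
\[ I^{t_1,t_2}_{(0,i)}(g_2) = I^{t_1,t_2}_{(i)}(\tilde g_2), \qquad \tilde g_2(s):=\int_{t_1}^{s} g_2(r)\,dr, \]
with $\sup_{[t_1,t_2]}|\tilde g_2|\leq (t_2-t_1)\sup_{[t_1,t_2]}|g_2|$, and $\tilde g_2$ is bounded, measurable, and adapted. Applying the first inequality with $\tilde g_2$ in place of $g_2$ immediately gives the announced bound, the factor $(t_2-t_1)$ upgrading the exponent from $(\|\beta\|+1)/2$ to $(\|\beta\|+3)/2$. The main book-keeping obstacle is tracking the quantity $\|\beta\|$ across the two inductive sub-cases and verifying that the indicator functions combine consistently; everything else is routine application of Fubini and the martingale property of It\^o integrals.
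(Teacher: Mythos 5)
Your proof is correct. Be aware, though, that the paper does not actually prove this lemma: it is imported from Kloeden and Platen with the one-line remark that it ``follows from integration by parts formula and expectation properties''. Your argument is a complete, self-contained implementation of exactly that remark: It\^o's product formula applied to $N_s=I^{t_1,s}_{\beta}(g_1)$ and $M_s=I^{t_1,s}_{(i)}(g_2)$, the vanishing of conditional expectations of stochastic integrals, and an induction on $|\beta|$ that tracks both the indicator and the exponent through the two sub-cases $\beta_{|\beta|}=0$ and $\beta_{|\beta|}\geq 1$; the book-keeping $\|\beta\|=\|\beta_-\|+2$ resp. $\|\beta\|=\|\beta_-\|+1$ and the identification $\delta_{ij}\one{(\beta_-)^+=\emptyset}=\one{(\beta)^+=i}$ are all right, and the reduction $I^{t_1,t_2}_{(0,i)}(g_2)=I^{t_1,t_2}_{(i)}(\tilde g_2)$ with $\tilde g_2(s)=\int_{t_1}^s g_2(r)\,dr$ is a clean way to get the second estimate from the first. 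Two minor points to fix or make explicit: with the paper's (Kloeden--Platen) convention the zero-length integral is $I^{t_1,t_2}_{\emptyset}(g_1)=g_1(t_2)$, not $g_1(t_1)$ --- harmless here only because $g_1$ is a deterministic function, and indeed the vanishing (indicator) part of the statement genuinely requires $g_1,g_2$ deterministic (for adapted integrands only the size estimate survives, as the example $\beta=(0)$, $g_1(s)=\tanh(B^i_s-B^i_{t_1})$, $g_2\equiv 1$ shows); and when you discard $\int_{t_1}^{t_2} M_s\,dN_s$ in the case $\beta_{|\beta|}\geq 1$ you should say a word about integrability (both factors lie in a finite Wiener chaos built from bounded deterministic integrands, so the integrand is square-integrable and the stochastic integral is a true martingale).
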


\section{Proof of Theorem \ref{MR} under \textbf{(SB)}}\label{Sec:ProofMR}
\subsection{Rate of convergence of the forward approximation: proof of \eqref{Eq:ErrorforSB}}
Here we prove the approximation order of the forward component. Let $\eta$ be a given family of probability measures on $\R^d$. We have the following decomposition of the error:
\begin{eqnarray*}
(P_{T_0,T_N} - Q_{T_0,T_N}^{\hmu}) \phi(x) &=& (P_{T_{0},T_N} - P^{\eta}_{T_{0},T_N})\phi(x) + Q_{T_{0},T_{N-1}}^{\hmu} (P^{\eta}_{T_{N-1},T_N} - Q_{T_{N-1},T_N}^{\hmu}) \phi(x)\\
&& + Q_{T_0,T_{N-2}}^{\hmu} P^{\eta}_{T_{N-2},T_N}\phi(x)-Q_{T_{0},T_{N-1}}^{\hmu} P^{\eta}_{T_{N-1},T_N}\phi(x)\\
&& + Q_{T_0,T_{N-3}}^{\hmu} P^{\eta}_{T_{N-3},T_N}\phi(x)-Q_{T_0,T_{N-2}}^{\hmu} P^{\eta}_{T_{N-2},T_N}\phi(x)\\
&& \vdots\\
&& + P^{\eta}_{T_{0},T_N}\phi(x)- Q_{T_{0},T_1}^{\hmu} P^{\eta}_{T_1,T_{N}}\phi(x) \\
&=&  (P_{T_{0},T_N} - P^{\eta}_{T_{0},T_N})\phi(x) + (P^{\eta}_{T_{0},T_1}- Q_{T_{0},T_1}^{\hmu}) P^{\eta}_{T_1,T_{N}}\phi(x) \\
&& + \sum_{j=1}^{N-1} Q_{T_{0},T_{j}}^{\hmu} \left[(P^{\eta}_{T_{j},T_{j+1}} - Q_{T_{j},T_{j+1}}^{\hmu} )P^{\eta}_{T_{j+1},T_{N}}\phi(x)\right],
\end{eqnarray*}
so that:

\begin{eqnarray}
(P_{T_0,T_N} - Q_{T_0,T_N}^{\hmu}) \phi(x) &=&   (P_{T_{0},T_N} - P^{\eta}_{T_{0},T_N})\phi(x) + (P^{\eta}_{T_{0},T_1}- Q_{T_{0},T_1}^{\hmu}) P^{\eta}_{T_1,T_{N}}\phi(x)\notag \\
&& + \sum_{j=1}^{N-2} Q_{T_{0},T_{j}}^{\hmu} \left[(P^{\eta}_{T_{j},T_{j+1}} - Q_{T_{j},T_{j+1}}^{\hmu} )P^{\eta}_{T_{j+1},T_{N}}\phi(x)\right]\notag\\
&& + Q_{T_{0},T_{N-1}}^{\hmu} \left[(P^{\eta}_{T_{N-1},T_{N}} - Q_{T_{N-1},T_{N}}^{\hmu} )\phi(x)\right] \notag
\end{eqnarray}
That is, the global error is decomposed as a sum of local errors (i.e., as the sum of errors on each interval). These local errors can be also split. Let us define the function

\begin{equation}\label{Eq:DefPsi}
\psi(T_k,x) := P^{\eta}_{T_k,T_{N}}\phi(x). 
\end{equation}
We emphasize that for all $t$ in $[0,T)$, $y\mapsto \psi(t,y)$ is $C_b^{\infty}$. Indeed, this function can be seen as the solution of the PDE
\begin{equation}\label{PDEfirst}
\left\lbrace
\begin{array}{ll}
\p_t \psi(t,y) + \mL^\eta \psi(t,y)=0,\text{ on } [0,T]\times \R^d\\
\psi(T,y)=\phi(y)
\end{array}
\right.
\end{equation}
taken at time $T_k$, where $\mL^{\eta}$ is defined by \eqref{Eq:CondimL}. The claim follows from Lemma \ref{PropPDE}. On each interval $\Delta_{T_k},\ k=1,\cdots,N-1$, the local error $(P^{\eta}_{T_{k},T_{k+1}} - Q^{\hmu}_{T_{k},T_{k+1}})\psi(T_{k+1},x)$ is expressed as

\begin{eqnarray}
(P^{\eta}_{T_{k},T_{k+1}} - Q^{\hmu}_{T_{k},T_{k+1}})\psi(T_{k+1},x) &=& (P^{\eta}_{T_{k},T_{k+1}} - \tilde{P}^{\hmu}_{T_{k},T_{k+1}})\psi(T_{k+1},x)\label{reg1}\\
&&\quad  +  (\tilde{P}^{\hmu}_{T_{k},T_{k+1}} - Q_{T_{k},T_{k+1}}^{\hmu})\psi(T_{k+1},x)\label{reg3}.
\end{eqnarray} 
Error \eqref{reg1} can be identified as a frozen (in time) error (and so, a sort of weak Euler error) plus an approximation error, in the sense that in step $k$,  the measure $\mu_{T_k}$ is approximated by the discrete law $\hmu_{T_k}$. Then, \eqref{reg3} is a (purely) cubature error on one step, and we have:

\begin{eqnarray*}
&&(P_{T_0,T_N} - Q_{T_0,T_N}^{\hmu}) \phi(x) \\
&& = (P_{T_{0},T_N} - P^{\eta}_{T_{0},T_N})\phi(x)\\
&& \quad + (P^{\eta}_{T_{0},T_{1}} - \tilde{P}^{\hmu}_{T_{0},T_{1}})\psi(T_{1},x) + \sum_{j=1}^{N-2} Q_{T_{0},T_{j}}^{\hmu} \left[(P^{\eta}_{T_{j},T_{j+1}} - \tilde{P}^{\hmu}_{T_{j},T_{j+1}})\psi(T_{j+1},x)\right]\\
&& \quad + (\tilde{P}^{\hmu}_{T_{0},T_{1}} - Q_{T_{0},T_{1}}^{\hmu})\psi(T_{1},x) + \sum_{j=1}^{N-2} Q_{T_{0},T_{j}}^{\hmu} \left[ (\tilde{P}^{\hmu}_{T_{j},T_{j+1}} - Q_{T_{j},T_{j+1}}^{\hmu})\psi(T_{j+1},x)\right]\\
&& \quad + Q_{T_{0},T_{N-1}}^{\hmu} \left[(P^{\eta}_{T_{N-1},T_{N}} - Q_{T_{N-1},T_{N}}^{\hmu} )\phi(x)\right]
\end{eqnarray*}
$\newline$

We have the two following Claims:
\begin{claim}\label{c1}
There exists a positive constant $C(T,V_{0:d})$ depending on the regularity of the $V_{0:d}$ and on $T$ such that,  for all $y$ in $\R^d$, for all family of probability measures $\eta$, for all $k \in \{1,\cdots,N-1\}$, one has:
\begin{eqnarray*}
&&\left|(P^{\eta}_{T_{k},T_{k+1}} - \tilde{P}^{\hmu}_{T_{k},T_{k+1}})\psi(T_{k+1},y)\right| \\
&&  \leq C(T,V_{0:d}) ||\psi(T_{k+1},.)||_{2,\infty} \sum_{i=0}^{d} \int_{T_k}^{T_{k+1}} \left|\la\eta_t,\varphi_i\ra-\sum_{p=0}^{q-1} [(t-T_k)^p/p!]\la \eta_{T_k} ,(\mL^{\eta})^p\varphi_i\ra\right| dt \\
&& \quad + C(T,V_{0:d}) ||\psi(T_{k+1},.)||_{2,\infty} \sum_{i=0}^{d}\sum_{p=0}^{q-1} [\Delta_{T_{k+1}}^p/p!]\left|\la \eta_{T_k}, (\mL^\eta)^p\varphi_i\ra - \la
\hmu_{T_k} ,(\mL^{\hmu})^p\varphi_i\ra \right|.
\end{eqnarray*}
\end{claim}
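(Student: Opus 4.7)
The plan is to exploit the PDE characterization of $\psi$ established in Lemma \ref{PropPDE} together with It\^o's formula. Since $\psi(s,y) = P^{\eta}_{s,T_N}\phi(y)$ satisfies $\partial_s \psi + \mL^\eta_s \psi = 0$ with terminal condition $\phi$ at $T_N$, applying It\^o's formula to $s \mapsto \psi(s, \tX_s^{T_k,y,\hmu})$ on $[T_k,T_{k+1}]$ and taking expectations yields
$$\tilde P^{\hmu}_{T_k,T_{k+1}}\psi(T_{k+1},y) - \psi(T_k,y) \;=\; \E\left[\int_{T_k}^{T_{k+1}} (\partial_s\psi + \tilde{\mL}^{\hmu}_s \psi)(s,\tX_s^{T_k,y,\hmu})\,ds\right].$$
Using the PDE $\partial_s \psi = -\mL^\eta_s \psi$ and the semigroup identity $\psi(T_k,y) = P^{\eta}_{T_k,T_{k+1}}\psi(T_{k+1},y)$, one obtains the key representation
$$(P^{\eta}_{T_k,T_{k+1}} - \tilde P^{\hmu}_{T_k,T_{k+1}})\psi(T_{k+1},y) \;=\; -\E\left[\int_{T_k}^{T_{k+1}} (\tilde{\mL}^{\hmu}_s - \mL^{\eta}_s)\psi(s,\tX_s^{T_k,y,\hmu})\,ds\right].$$

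Next, I would bound the integrand pointwise. The two generators have exactly the same structure \eqref{defmL}; they differ only in the third (scalar) argument of each vector field $V_i$. Combining the assumed Lipschitz continuity of the $V_i$'s in that argument with the explicit form of $\mL$ and the boundedness of the first two spatial derivatives of $V_{0:d}$, one gets
$$|(\tilde{\mL}^{\hmu}_s - \mL^{\eta}_s)\psi(s,z)| \;\leq\; C(V_{0:d})\,\|\psi(s,\cdot)\|_{2,\infty}\sum_{i=0}^d \left|\langle\eta_s,\varphi_i\rangle - \sum_{p=0}^{q-1}\frac{(s-T_k)^p}{p!}\langle\hmu_{T_k},(\mL^{\hmu})^p\varphi_i\rangle\right|,$$
uniformly in $z \in \R^d$.

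A triangular inequality then splits the measure argument difference into exactly the two announced pieces: the Taylor remainder $\bigl|\langle\eta_s,\varphi_i\rangle - \sum_{p=0}^{q-1}[(s-T_k)^p/p!]\langle\eta_{T_k},(\mL^\eta)^p\varphi_i\rangle\bigr|$ and the law approximation term $\sum_{p=0}^{q-1}[(s-T_k)^p/p!]\bigl|\langle\eta_{T_k},(\mL^\eta)^p\varphi_i\rangle - \langle\hmu_{T_k},(\mL^{\hmu})^p\varphi_i\rangle\bigr|$. Integrating over $[T_k,T_{k+1}]$ and pulling out the weighting factor $(s-T_k)^p/p! \leq \Delta_{T_{k+1}}^p/p!$ in the second piece delivers precisely the bound stated in the claim, modulo replacing $\|\psi(s,\cdot)\|_{2,\infty}$ by $\|\psi(T_{k+1},\cdot)\|_{2,\infty}$.

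That last replacement is the only subtle point, and I see it as the main obstacle. It is handled through the semigroup identity $\psi(s,y) = P^{\eta}_{s,T_{k+1}}\psi(T_{k+1},y) = \E[\psi(T_{k+1},X^{s,y,\eta}_{T_{k+1}})]$: since the $V_i$ are $C^\infty_b$ in all arguments and the measure enters only through a bounded scalar parameter, standard stochastic flow regularity gives $\|\psi(s,\cdot)\|_{2,\infty} \leq C(T,V_{0:d})\|\psi(T_{k+1},\cdot)\|_{2,\infty}$ with a constant independent of the family $\eta$. Absorbing this into $C(T,V_{0:d})$ yields the stated estimate.
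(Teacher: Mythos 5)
Your argument is correct and is essentially the same as the paper's: the paper simply delegates this step to Lemma \ref{OnestepEuler} applied with $g=\psi(T_{k+1},\cdot)$, and the proof of that lemma is precisely your It\^o-formula representation $(P^{\eta}-\tilde P^{\hmu})\psi(T_{k+1},y)=-\E\int_{T_k}^{T_{k+1}}(\tilde{\mL}^{\hmu}_s-\mL^{\eta}_s)\tu(s,\tX_s)\,ds$ followed by the Lipschitz bound on the generator difference and the parabolic gradient estimate $\|D_\beta\tu\|_\infty\leq C\|g\|_{\|\beta\|,\infty}$ that you invoke to pass from $\|\psi(s,\cdot)\|_{2,\infty}$ to $\|\psi(T_{k+1},\cdot)\|_{2,\infty}$. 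You have, in effect, reproved Lemma \ref{OnestepEuler} inline rather than quoting it.
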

\begin{proof}
We deduce the claim by applying Lemma \ref{OnestepEuler} to $y\mapsto \psi(T_{k},y)$ for each $k \in \{0,\cdots,N-2\}$.
\end{proof}

\begin{claim}\label{c2}
There exists a positive constant $C(T,V_{0:d},d,m)$ depending on the regularity of the $V_{0:d}$ on the dimension $d$ and the cubature order $m$ such that: for all $y$ in $\R^d$, for all family of probability measure $\eta$, for all $k \in \{0,\cdots,N-1\}$, one has:
\begin{eqnarray*}
 \left|(\tilde{P}^{\hmu}_{T_{k},T_{k+1}} - Q_{T_{k},T_{k+1}}^{\hmu})\psi(T_{k+1},y) \right|\leq C(T,V_{0:d},d,m) \sum_{l=m+1}^{m+2} ||\psi(T_{k+1},\cdot)||_{l,\infty}\Delta_{T_{k+1}}^{\frac{l}{2}}.
\end{eqnarray*}
\end{claim}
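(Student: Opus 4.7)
The approach is the classical one for cubature on Wiener space, adapted to the present frozen-coefficient setting. Since the cubature formula is defined through Stratonovich iterated integrals, I first rewrite the frozen-coefficient SDE \eqref{Eq:ArtifDyna2} in Stratonovich form using the drift $\bV_0$ of \eqref{Eq:V0Bar}, so that both $\tX^{T_k,y,\hmu}$ and the cubature paths $\tilde{\omega}_j$ solve ODEs/SDEs against the same vector fields $(\bV_0, V_1, \dots, V_d)$ driven respectively by $(dt, \circ dB^{1}, \dots, \circ dB^d)$ and $(dt, d\tilde\omega_j^1, \dots, d\tilde\omega_j^d)$. Call $\tilde{\mc{V}}_\beta$ the corresponding Stratonovich versions of the iterated vector field operators.

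Next I apply a Stratonovich–Taylor expansion of order $m$ to $\psi(T_{k+1}, \tX^{T_k,y,\hmu}_{T_{k+1}})$, obtaining
\[
\psi(T_{k+1}, \tX^{T_k,y,\hmu}_{T_{k+1}}) \;=\; \sum_{\beta \in \mc{A}_m} \tilde{\mc{V}}_\beta \psi(T_k,y)\, J^{T_k,T_{k+1}}_\beta \;+\; \sum_{\beta \in \p\mc{A}_m} J^{T_k,T_{k+1}}_\beta\bigl[\tilde{\mc{V}}_\beta \psi(\cdot,\tX^{T_k,y,\hmu}_\cdot)\bigr],
\]
where $J^{t,s}_\beta$ denotes the iterated Stratonovich integral. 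Exactly the same expansion, along each cubature path with weight $\lambda_j$, applies under $Q^{\hmu}_{T_k,T_{k+1}}$, with the iterated Stratonovich integrals replaced by the iterated time-scaled integrals against $\tilde\omega_j$.

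By the very definition of an $m$-cubature formula (and its time-rescaled version on $[T_k,T_{k+1}]$), the expectations of $J^{T_k,T_{k+1}}_\beta$ under the Wiener measure and under $\Q_{T_k,T_{k+1}}$ coincide for every $\beta \in \mc{A}_m$. Subtracting the two expansions, the principal parts cancel identically, and we are left only with the frontier remainders:
\[
(\tilde{P}^{\hmu}_{T_k,T_{k+1}} - Q^{\hmu}_{T_k,T_{k+1}})\psi(T_{k+1},y) \;=\; \sum_{\beta \in \p\mc{A}_m} \Bigl\{\EE \,J^{T_k,T_{k+1}}_\beta[\tilde{\mc{V}}_\beta \psi(\cdot,\tX_\cdot)] \;-\; \EE_{\Q}\, J^{T_k,T_{k+1}}_\beta[\tilde{\mc{V}}_\beta \psi(\cdot,\tX_\cdot)]\Bigr\}.
\]
Each term is controlled separately on each side: for the Wiener side I use Lemma \ref{lemmathatgivestheorderoftheiterateditointegral} (or its Stratonovich analogue obtained by converting $J_\beta$ into a combination of $I_\gamma$'s of the same or smaller grading) to get a bound of the form $C\,\sup_s|\tilde{\mc{V}}_\beta \psi(s,\tX_s)|\,\Delta_{T_{k+1}}^{\|\beta\|/2}$; for the cubature side the bound is immediate by the bounded variation and uniformly-bounded-length property of the scaled paths $\tilde\omega_j$, which gives the same $\Delta_{T_{k+1}}^{\|\beta\|/2}$ scaling. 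Since $\p\mc{A}_m \subset \mc{A}_{m+2}\setminus \mc{A}_m$, the surviving values of $\|\beta\|$ are $m+1$ and $m+2$.

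It only remains to bound $\|\tilde{\mc{V}}_\beta \psi(s,\cdot)\|_\infty$ in terms of $\|\psi(T_{k+1},\cdot)\|_{l,\infty}$ with $l=\|\beta\|\in\{m+1,m+2\}$ and of derivatives of the (smooth, bounded) vector fields $V_{0:d}$: expanding $\tilde{\mc{V}}_\beta$ by the Leibniz rule yields a linear combination of partial derivatives of $\psi$ of order at most $|\beta|\le\|\beta\|$ weighted by smooth bounded polynomial expressions in the derivatives of $V_{0:d}$, which absorb into the constant $C(T,V_{0:d},d,m)$. The only delicate point — and the one I expect to be the main technical obstacle — is to verify that the Stratonovich iterated integrals $J^{T_k,T_{k+1}}_\beta$ indeed inherit from Lemma \ref{lemmathatgivestheorderoftheiterateditointegral} the $\Delta_{T_{k+1}}^{\|\beta\|/2}$ scaling (after the Itô–Stratonovich conversion and the reduction of $\tilde{\mc{V}}_\beta$ to a sum of classical $\mc{V}_\gamma$'s with $\|\gamma\|\leq\|\beta\|$); this is purely bookkeeping of multi-indices. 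Summing the contributions of $\beta\in\p\mc{A}_m$ yields exactly the announced bound.
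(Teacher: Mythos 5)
Your proposal is correct and is essentially the paper's argument: Claim \ref{c2} is proved in the paper by a direct invocation of Lemma \ref{OnestepCub}, whose proof is likewise a Stratonovich--Taylor expansion of order $m$, cancellation of the degree-$\leq m$ principal parts through the defining property of the cubature measure, and control of the frontier remainders using $\p\mc{A}_m\subset\mc{A}_{m+2}\setminus\mc{A}_m$ together with the smoothness of $V_{0:d}$ (the paper cites the expansion from Kloeden--Platen and the remainder bound from Lyons--Victoir). The only presentational difference is that the paper first introduces the auxiliary solution $\tilde u$ of the frozen linear PDE with terminal datum $\psi(T_{k+1},\cdot)$ — which gives $\mc{V}_{(0)}\tilde u=0$ and supplies the derivative bounds \eqref{boundtu2} — and Taylor-expands $\tilde u$, whereas you expand $\psi(T_{k+1},\cdot)$ directly as a spatial function, an equally valid variant; do, however, correct the base point in your principal part to $\tilde{\mc{V}}_\beta\psi(T_{k+1},y)$ (fixed time slice of $\psi$, with the frozen vector fields inside $\tilde{\mc{V}}_\beta$ evaluated at $T_k$) rather than $\tilde{\mc{V}}_\beta\psi(T_k,y)$, and similarly $\psi(T_{k+1},\tX_\cdot)$ in the remainder.
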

\begin{proof}
The claim follows, by applying Lemma \ref{OnestepCub} with $\hmu$ to the function $y\in \R^d \mapsto \psi(T_{k},y)$ for each $k\in \{1,\cdots,N-1\}$.
\end{proof}

$\newline$
Then, by plugging estimates of Claims \ref{c1} and \ref{c2} in the error expansion we deduce that:
\begin{eqnarray}  
&&\left|(P_{T_0,T_N} - Q_{T_0,T_N}^{\hmu}) \phi(x)\right|\label{errorrra}\\
&& \leq C(T,V_{0:d}) \sum_{j=0}^{N-1}||\psi(T_{j+1},\cdot)||_{2,\infty} \int_{T_{j}}^{T_{j+1}}\sum_{i=0}^{d} \left|\la\eta_t,\varphi_i\ra-\sum_{p=0}^{q-1} [(t-T_k)^p/p!]\la \eta_{T_{j}}, (\mL^\eta)^p\varphi_i\ra\right| dt \notag\\
&& \quad +  C(T,V_{0:d})  \sum_{j=0}^{N-1}||\psi(T_{j+1},\cdot)||_{2,\infty} \sum_{p=0}^{q-1} \Delta_{T_{j+1}}^{p+1} \sum_{i=0}^{d} \left|\la\eta_{T_{j}},  (\mL^{\eta})^p\varphi_i\ra - \la \hmu_{T_{j}}, (\mL^{\hmu})^p\varphi_i\ra \right|\notag\\
&& \quad + C(T,V_{0:d},d,m)\sum_{j=0}^{N-1}\sum_{l=m+1}^{m+2}||\psi(T_{j+1},\cdot)||_{l,\infty}\Delta_{T_{j+1}}^{\frac{l}{2}}+\left|(P_{T_{0},T_N} - P^{\eta}_{T_{0},T_N})\phi(x)\right|\notag
\end{eqnarray}

$\newline$

Up to now, the analysis holds for any family of probability measures $\eta$. \emph{The key point in the proof is to note that we can actually choose $\eta=\mu$}, that is, the law of the solution of the forward component in \eqref{NLFBSDE}. In that case for all measurable function $g$:
$$\la \eta_\cdot, g\ra = \la \mu_\cdot, g\ra = \E[g(X^x_\cdot)].$$
Then, \eqref{errorrra} becomes:
\begin{eqnarray}
&&\left|(P_{T_0,T_N} - Q_{T_0,T_N}^{\hmu}) \phi(x)\right|  \label{forwass}\\
&& \leq C(T,V_{0:d}) \sum_{j=0}^{N-1}||\psi(T_{j+1},\cdot)||_{2,\infty} \int_{T_{j}}^{T_{j+1}} \sum_{i=0}^{d} \left|\E\left[\varphi_i(X_{t}^{x,\mu})\right]-\sum_{p=0}^{q-1} [(t-T_k)^p/p!]\E\left[ (\mL^\mu)^p\varphi_i(X_{T_{j}}^{x,\mu})\right]\right|dt \notag\\
&& \quad + C(T,V_{0:d})  \sum_{j=0}^{N-1}||\psi(T_{j+1},\cdot)||_{2,\infty}\sum_{p=0}^{q-1} \Delta_{T_{j+1}}^{p+1} \sum_{i=0}^{d} \left|\E\left[(\mL^{\mu})^p\varphi_i(X_{T_j}^{x,\mu})\right]-\E \left[(\mL^{\hmu})^p\varphi_i(\hX_{T_{j}}^{x,\hmu})\right] \right|\notag\\
&& \quad +  C(T,V_{0:d},d,m)\sum_{j=0}^{N-1}\sum_{l=m+1}^{m+2}||\psi(T_{j+1},\cdot)||_{l,\infty}\Delta_{T_{j+1}}^{\frac{l}{2}}\notag
\end{eqnarray}
since $P_{s,t}  = P^{\mu}_{s,t}$ for all $s<t \in [0,T]$, by definition. Now we have:
\begin{claim}\label{c4}
For any $k \in \{0,\cdots,N-1\}$, and for all $t$ in $[T_k;T_{k+1})$ there exists a positive constant $C(d,V_{0:d})$ such that:
\begin{eqnarray*}
\int_{T_{k}}^{T_{k+1}} \sum_{i=0}^{d}\left|\E\left[\varphi_i(X_{t}^{x,\mu})\right]-\sum_{p=0}^{q-1} [(t-T_k)^p/p!]\E \left[(\mL^{\mu})^p\varphi_i(X_{T_{k}}^{x,\mu})\right]\right|dt \leq C(d,V_{0:d})\|\varphi\|_{2q,\infty}\Delta_{T_{k+1}}^{q+1}
\end{eqnarray*}
\end{claim}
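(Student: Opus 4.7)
The plan is to Taylor-expand the conditional semigroup in time around $T_k$. Since the marginal law $\mu$ is frozen in the definition of the conditional dynamics \eqref{Eq:ArtifDyna1}, the process $X^{x,\mu}$ is Markov, so
$$\E\left[\varphi_i(X_t^{x,\mu})\right] = \E\left[P^{\mu}_{T_k,t}\varphi_i(X_{T_k}^{x,\mu})\right].$$
It is therefore enough to Taylor-expand $y \mapsto P^{\mu}_{T_k,t}\varphi_i(y) = \E[\varphi_i(X_t^{T_k,y,\mu})]$ around $t=T_k$ with a remainder of order $(t-T_k)^q$, uniformly in $y$, and then take expectation with respect to $X_{T_k}^{x,\mu}$.

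To do so, I would apply the Itô-Taylor expansion of Lemma \ref{Theo:ItoTaylorExpansion} to $\varphi_i$ along $X^{T_k,y,\mu}$ at order $n = 2q-2$:
$$\varphi_i(X_t^{T_k,y,\mu}) = \sum_{\beta\in\mathcal{A}_{2q-2}} \mathcal{V}_\beta\varphi_i(T_k,y)\,I_\beta^{T_k,t} + \sum_{\beta\in\partial\mathcal{A}_{2q-2}} I_\beta^{T_k,t}\!\left[\mathcal{V}_\beta\varphi_i(\cdot,X_\cdot^{T_k,y,\mu})\right].$$
The key observation is that $\E[I_\beta^{T_k,t}[g]] = 0$ for any adapted and bounded integrand $g$ as soon as $\beta$ contains at least one non-zero entry: if $\beta_l \neq 0$ the outermost operation is a mean-zero Brownian integral, while if $\beta_l = 0$ one applies Fubini and recurses on $\beta-$ until a non-zero entry is encountered. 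After taking expectation, only all-zero multi-indices survive. In $\mathcal{A}_{2q-2}$ these are $(0,\ldots,0)$ of length $p$ with $p \in \{0,1,\ldots,q-1\}$ (since $\|(0,\ldots,0)\|=2p$), each producing the $p$-th term of the Taylor sum of the claim after identifying $\mathcal{V}_{(0)}^p\varphi_i$ with $(\mathcal{L}^{\mu})^p\varphi_i$ as iterated space-time operators acting on $\varphi_i$. In $\partial\mathcal{A}_{2q-2}$ the only all-zero multi-index is the one of length $q$.

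The remainder thus reduces to a single $q$-fold time-integral, bounded in absolute value by
$$\tfrac{1}{q!}\,\|\mathcal{V}_{(0)}^q\varphi_i\|_\infty\,(t-T_k)^q \leq C(d,V_{0:d})\,\|\varphi_i\|_{2q,\infty}\,(t-T_k)^q,$$
since $\mathcal{V}_{(0)}^q\varphi_i$ involves at most $2q$ spatial derivatives of $\varphi_i$ together with smooth bounded combinations of $V_{0:d}$ and of their space/time derivatives. Averaging over $y = X_{T_k}^{x,\mu}$, summing over $i=0,\ldots,d$ and integrating $t$ over $[T_k,T_{k+1}]$ gives the claimed estimate, thanks to $\int_{T_k}^{T_{k+1}}(t-T_k)^q\,dt = \Delta_{T_{k+1}}^{q+1}/(q+1)$. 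The main non-routine point is the vanishing of $\E[I_\beta^{T_k,t}[g]]$ for every multi-index $\beta$ carrying a non-zero index: without it, a naive $L^2$ estimate on the remainder terms in $\partial\mathcal{A}_{2q-2}$ with $\|\beta\| = 2q-1$ would only yield $\Delta_{T_{k+1}}^{q+1/2}$, one half-order short of the $\Delta_{T_{k+1}}^{q+1}$ required here.
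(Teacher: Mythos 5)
Your proof is correct and follows essentially the same route as the paper, whose entire argument is the one-line invocation of an It\^o--Taylor expansion of order $q$ of $\varphi_i(X^{X_{T_k},\mu})$; you simply make explicit the details the paper leaves implicit (choosing the expansion order so that the all-zero multi-indices up to length $q-1$ reproduce the Taylor sum, the vanishing of $\E[I_\beta^{T_k,t}[g]]$ for any $\beta$ containing a non-zero index, and the $(t-T_k)^q$ bound on the surviving all-zero remainder term). Your identification of $\mc{V}_{(0)}^p\varphi_i$ with $(\mL^{\mu})^p\varphi_i$ as iterated space-time operators is the right (and charitable) reading of the paper's notation for the time-inhomogeneous generator, so the argument is consistent with the stated claim.
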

\begin{proof}
This follows by Itô-Taylor expansion of order $q$ of $\varphi_i(X^{X_{T_k},\mu})$ for each $i=0,\cdots,d$ and for any $k$ in $\{0,\cdots,N-1\}$.
\end{proof}
Therefore,

\begin{eqnarray}
&&\left|(P_{T_0,T_N} - Q_{T_0,T_N}^{\hmu}) \phi(x)\right|  \label{Eq:AlmostFinalErrorA} \\
&& \leq C(T,V_{0:d})\sum_{j=0}^{N-1}||\psi(T_{j+1},\cdot)||_{2,\infty}\sum_{p=0}^{q-1} \Delta_{T_{j+1}}^{p+1} \sum_{i=0}^{d} \left|(P_{T_0,T_N} - Q_{T_0,T_N}^{\hmu}) \mL^p\varphi_i(x)\right| \notag\\
&& \quad + C(T,V_{0:d},d) \|\varphi\|_{2q,\infty}\sum_{j=0}^{N-1}||\psi(T_{j+1},\cdot)||_{2,\infty}\Delta_{T_{j+1}}^{q+1}  \notag\\
&& \quad +  C(T,V_{0:d},d,m)  \sum_{j=0}^{N-1}\sum_{l=m+1}^{m+2}||\psi(T_{j+1},\cdot)||_{l,\infty}\Delta_{T_{j+1}}^{\frac{l}{2}}.\notag
\end{eqnarray}
Thanks to estimate \eqref{Eq:Bound_derivatives_linear_u} in Lemma \ref{PropPDE}, for all $n$ in $\mathbb{N}$, we have the following bound on the supremum norm of the derivatives of $\psi$ up to order $n$:
\begin{eqnarray}\label{Eq:GradBoundsmooth}
&&||\nabla_y^n \psi(t,\cdot) ||_{\infty} \leq C(T,V_{0:d}) ||\phi||_{n,\infty}.
\end{eqnarray}
By plugging this bound in \eqref{Eq:AlmostFinalErrorA} we get
\begin{eqnarray*}
&&\left|(P_{T_0,T_N} - Q_{T_0,T_N}^{\hmu}) \phi(x) \right|\\
&& \leq   C(T,V_{0:d})||\phi||_{2,\infty} \sum_{j=0}^{N-1} \sum_{p=0}^{q-1}\Delta_{T_{j+1}}^{p+1}\sum_{i=0}^{d} \left|(P_{T_0,T_j}-Q^{\hmu}_{T_0,T_j})\mL^p\varphi_i(x)\right|\notag\\
&& \quad + C(T,V_{0:d},d,m)( ||\phi||_{2m+2,\infty}+||\varphi||_{2q,\infty} )\left(\frac{1}{N}\right)^{q\wedge (m-1)/2}.\notag
\end{eqnarray*}
It should be remarked that the term on the right hand side is controlled in terms of the approximation error itself, acting on the functions $\mL^p\varphi_i,\ i=1,\cdots,d,\ p=0,\cdots,q-1$, from step $0$ to $j$ for any $j$ in $\{1,\cdots,N-1\}$. To proceed, the argument is the following one: since these bounds hold for all (at least) $\phi$ smooth enough, one can let $\phi=\varphi_0$, and use the discrete Gronwall Lemma to get the following bound on $\varphi_0$:
\begin{eqnarray*}
\left|(P_{T_0,T_N} - Q_{T_0,T_N}^{\hmu}) \varphi_0(x) \right| &\leq & C(T,V_{0:d},m,||\phi||_{m+2,\infty},||\varphi||_{2q,\infty},||\varphi_0||_{2q +m,\infty})\left(\frac{1}{N}\right)^{q\wedge [(m-1)/2]}\\
&& \quad \times \bigg\{\sum_{j=0}^{N-1} \sum_{p=0}^{q-1}\Delta_{T_{j+1}}^{p+1}\sum_{i=1}^{d} \left|(P_{T_0,T_j}-Q^{\hmu}_{T_0,T_j})\mL^p\varphi_i(x)\right|\\
&& \qquad + \sum_{j=0}^{N-1} \sum_{p=1}^{q-1}\Delta_{T_{j+1}}^{p+1} \left|(P_{T_0,T_j}-Q^{\hmu}_{T_0,T_j})\mL^p\varphi_0(x)\right|\bigg\}
\end{eqnarray*}
It is clear that, by iterating this argument (i.e., by letting $\phi=\varphi_1$ and then $\phi=\varphi_2$,$\ldots$, $\phi = \mL \varphi_0 $, etc...) we obtain:

\begin{eqnarray*}
\left|(P_{T_0,T_N} - Q_{T_0,T_N}^{\hmu}) \phi(x) \right|& \leq & C(T,V,d,q,m,||\phi||_{m+2,\infty},||\varphi||_{2q+m,\infty}) \left(\frac{1}{N}\right)^{q\wedge[ (m-1)/2]}. 
\end{eqnarray*}
This concludes the proof \eqref{Eq:ErrorforSB} at time $T$.  From these arguments, we easily deduce that the estimate holds for any $T_k$, $k=1,\ldots,N$.\qed

\subsection{Rate of convergence for the backward approximation: proof of \eqref{Eq:rate1} and \eqref{Eq:rate3}}\label{subsec:backwardSB}
Here we prove the approximation order of the backward component. Before presenting the proof, we introduce some notations. Let us define the Brownian counterparts of $\hTheta_{k+1,k}, \hTheta_{k}$ and $\hzeta_k$ given in step \ref{Eq:DefHThetaK1} in Algorithm \ref{alg:FirstOrder} and steps \ref{Eq:DefHThetaK2}, \ref{Eq:DefHThetaK}  and \ref{Eq:DefHZeta} in Algorithm \ref{alg:SecondOrder}. For all family of probability measures $\eta$ we set
\begin{eqnarray}
&& \Theta_{k}^\eta(y):= \left(T_{k},y,u(T_{k},y),v(T_{k},y),\la \eta_{T_k},\varphi_f [\cdot,u(T_k,\cdot)] \ra\right), \label{Eq:DefThetaContinue}\\
&&\bTheta_{k+1,k}^{\eta^1,\eta^2}(y):= \left(T_{k+1},X_{T_{k+1}}^{T_k,y,\eta^1},u(T_{k+1},X_{T_{k+1}}^{T_k,y,\eta^1}),v(T_{k},y),\la \eta^2_{T_k}, \varphi_f [\cdot,u(T_k,\cdot)]\ra\right),\notag
\end{eqnarray}
and
\begin{equation*}
\zeta_k = 4 \frac{B_{T_{k+1}}-B_{T_k}}{\Delta_{T_{k+1}}} - 6 \frac{\int_{T_k}^{T_{k+1}} (s-T_k)dB_{s}}{\Delta^2_{T_{k+1}}}.
\end{equation*}

The proof uses extensively the regularity of the function $u$. From Lemma \ref{RegNonLinearPDE}, for all $t\in [0,T)$, the function $y\in \R^d \mapsto u(t,y)$ is $C_b^{\infty}$ with uniform bounds in time. In the elliptic case the same situation holds, although the bounds depend on time and blow up in the boundary. Hence, we keep track of the explicit dependence of each error term on $u$ and its derivatives in such a way that the proof is simplified for the elliptic case.

Moreover, we will expand and bound terms of the form $y\mapsto f(\cdot,y,u(\cdot,y),\mc{V}u(\cdot,y),\cdot)$. When differentiating such a term, the bounds involve the product of the derivatives of $u$ with respect to the space variable. Namely, the $r^{{\rm th}}$ differentiation of $f$ involves a product of at most $r+1$ derivatives of $u$. To keep track of the order of the derivatives that appear in the bound, we introduce the set of positive integers for which their sum is less than or equal to $r$: $\mc{I}(l,r)= \{I=(I_1,\ldots,I_l) \in \{1,\ldots,r\}^l: \sum_j I_j \leq r  \}$, and define the quantity:
\begin{equation}
\label{Eq:DefinitionM}
M_u(r,s):=  \sum_{ l=1 }^{r} \sum_{I\in \mc{I}(l,r)}\prod_{j=1}^{l} ||u(s,.)||_{I_j,\infty}.
\end{equation}

\emph{(1) Proof of the order of convergence for the first order algorithm (Algorithm \ref{alg:FirstOrder}).}

Let $k \in \{1,\ldots,N-1\}$. We first break the error between $u$ and $\hu^1$ as follows:
\begin{eqnarray} 
u(T_k,\hX^{\pi}_{T_k})  - \hu^1(T_k,\hX^{\pi}_{T_k}) 
&=&  u(T_k,\hX^{\pi}_{T_k}) - \EX{u(T_{k+1},X^{T_k,\hX^{\pi}_{T_k},\mu}_{T_{k+1}}) + \Delta_{T_{k+1}} f(\bTheta^{\mu,\mu}_{k+1,k}(\hX^{\pi}_{T_k}))}\label{Eq:Erroru11} \\
&&+ \E \left[u(T_{k+1},X^{T_k,\hX^{\pi}_{T_k},\mu}_{T_{k+1}}) + \Delta_{T_{k+1}} f(\bTheta^{\mu,\mu}_{k+1,k}(\hX^{\pi}_{T_k}))\right] \label{Eq:Erroru12}\\
&& \quad - \E_{\Q_{T_k,T_{k+1}}}\left[u(T_{k+1},X^{T_k,\hX^{\pi}_{T_k},\hmu}_{T_{k+1}})+ \Delta_{T_{k+1}} f(\bTheta^{\hmu,\mu}_{k+1,k}(\hX^{\pi}_{T_k}))\right]\notag\\
&&  +\E_{\Q_{T_k,T_{k+1}}} \bigg[u(T_{k+1},X^{T_k,\hX^{\pi}_{T_k},\hmu}_{T_{k+1}}) - \hu^1(T_{k+1},X^{T_k,\hX^{\pi}_{T_k},\hmu}_{T_{k+1}})\label{Eq:Erroru13}\\
&&\quad  + \Delta_{ T_{k+1}} \left( f(\bTheta^{\hmu,\mu}_{k+1,k}(\hX^{\pi}_{T_k})) -  f(\ApproxVarT{\Theta}_{k+1,k})\right)\bigg].\notag
\end{eqnarray}
Similarly, we can expand the error between $v$ and $\hv^1$ as:
\begin{eqnarray}
&&\Delta_{T_{k+1}} \left[v(T_k,\hX^{\pi}_{T_k})  - \hv^1 (T_k,\hX^{\pi}_{T_k}) \right]\notag\\ 
&&=\Delta_{T_{k+1}}  v(T_k,\hX^{\pi}_{T_k}) - \EX{ u\left(T_{k+1},X_{T_{k+1}}^{T_k,\hX^{\pi}_{T_k},\mu}\right)\Delta B_{T_{k+1}} } \label{Eq:Errorv11}\\
&& \quad  + \EX{ u\left(T_{k+1},X_{T_{k+1}}^{T_k,\hX^{\pi}_{T_k},\mu}\right) \Delta B_{T_{k+1}} }-\ApproxEx[T_k,T_{k+1}]{ u\left(T_{k+1},X_{T_{k+1}}^{T_k,\hX^{\pi}_{T_k},\hmu}\right) \Delta B_{T_{k+1}} }  \label{Eq:Errorv12}\\
&& \quad  + \E_{\Q_{T_k,T_{k+1}}} \left( \left[ u\left(T_{k+1},X_{T_{k+1}}^{T_k,\hX^{\pi}_{T_k},\hmu}\right) - \hu^1\left(T_{k+1},X_{T_{k+1}}^{T_k,\hX^{\pi}_{T_k},\hmu}\right) \right] \Delta B_{T_{k+1}}\right).\label{Eq:Errorv13}
\end{eqnarray}

Then, at each step, the approximation error on the backward variables can be expanded as: a first term \eqref{Eq:Erroru11} and \eqref{Eq:Errorv11}, corresponding to scheme errors; a second term, \eqref{Eq:Erroru12} and \eqref{Eq:Errorv12}, corresponding to generalized cubature errors and can be viewed as one step versions of the forward error \eqref{Eq:ErrorforSB} in Theorem \ref{MR} ; and a third term, \eqref{Eq:Erroru13} and \eqref{Eq:Errorv13}, which are propagation errors.

Let us explain how the proof works. We will bound separately each error: the scheme, cubature and propagation errors. Each bound is summarized in a Claim (respectively Claims \ref{Cl:back1-1}, \ref{Cl:back1-2} and \ref{Cl:back1-2} below). Then, we will deduce the dynamics of the error at step $k$, $\mc{E}_u^1(k)$ defined as \eqref{Eq:def_epsilon} and conclude with a Gronwall argument.

The first claim below gives the bounds on the scheme errors.
\begin{claim}\label{Cl:back1-1}
There exists a constant $C$ depending on the regularity of $V_{0:d}$ and $f$ (and not on $k$) such that the scheme errors \eqref{Eq:Erroru11} and \eqref{Eq:Errorv11} are bounded by:
\begin{eqnarray*}
&&\left|u(T_k,\hX^{\pi}_{T_k}) - \EX{u(T_{k+1},X^{T_k,\hX^{\pi}_{T_k},\mu}_{T_{k+1}}) + \Delta_{T_{k+1}} f(\bTheta^{\mu,\mu}_{k+1,k}(\hX^{\pi}_{T_k}))}\right| \leq C \sup_{s\in [T_k,T_{k+1}]}||u(s,\cdot)||_{4,\infty} \Delta^2_{T_{k+1}}\\
&&\left|\Delta_{T_{k+1}}  v(T_k,\hX^{\pi}_{T_k}) - \EX{ u\left(T_{k+1},X_{T_{k+1}}^{T_k,\hX^{\pi}_{T_k},\mu}\right)\Delta B_{T_{k+1}} }\right| \leq C \sup_{s\in [T_k,T_{k+1}]}||u(s,\cdot)||_{3,\infty} \Delta^2_{T_{k+1}}
\end{eqnarray*}
\end{claim}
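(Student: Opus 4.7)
The two estimates in the claim are both instances of the standard BSDE-scheme local-error analysis, with the twist that here $u$ is a McKean–Vlasov decoupling field satisfying \eqref{Eq:NonLinearPDE}. The plan is to exploit this PDE (or equivalently the BSDE verified by $(u(t,X_t),v(t,X_t))$ in the conditional system \eqref{cFBSDE}) to rewrite the continuous-time quantities $u(T_k,\hX^\pi_{T_k})$ and $v(T_k,\hX^\pi_{T_k})$ in integral form on $[T_k,T_{k+1}]$, and then apply Itô–Taylor expansion (Lemma \ref{Theo:ItoTaylorExpansion}) together with the moment bounds of Lemma \ref{lemmathatgivestheorderoftheiterateditointegral}.

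For the first bound, I would start from Itô's formula applied to $s \mapsto u(s,X_s^{T_k,\hX^\pi_{T_k},\mu})$, which together with \eqref{Eq:NonLinearPDE} yields
\[
u(T_k,\hX^\pi_{T_k}) \;=\; \E\!\left[u(T_{k+1},X_{T_{k+1}}^{T_k,\hX^\pi_{T_k},\mu})\right] + \E\!\int_{T_k}^{T_{k+1}} f(\Theta^\mu_s)\,ds,
\]
where $\Theta^\mu_s := (s,X_s,u(s,X_s),v(s,X_s),\langle\mu_s,\varphi_f(\cdot,u(s,\cdot))\rangle)$. Hence the error reduces to controlling
\[
\E\!\int_{T_k}^{T_{k+1}}\!\!\bigl[f(\Theta^\mu_s)-f(\bTheta^{\mu,\mu}_{k+1,k}(\hX^\pi_{T_k}))\bigr]\,ds.
\]
I would insert the intermediate value $f(\Theta^\mu_{T_k}(\hX^\pi_{T_k}))$ and split into two pieces: (i) $\E\int_{T_k}^{T_{k+1}}[f(\Theta^\mu_s)-f(\Theta^\mu_{T_k}(\hX^\pi_{T_k}))]\,ds$, which is the standard rectangle-rule defect and, by one Itô–Taylor expansion, is $O(\Delta_{T_{k+1}}^2)$ with a constant depending on the first two $\mc V$-derivatives of the integrand, hence on $\sup_s\|u(s,\cdot)\|_{3,\infty}$ (time derivatives of $\langle\mu_s,\cdot\rangle$ are harmless thanks to smoothness of $V_i,\varphi_i,\varphi_f$); and (ii) $\Delta_{T_{k+1}}\E[f(\Theta^\mu_{T_k}(\hX^\pi_{T_k}))-f(\bTheta^{\mu,\mu}_{k+1,k}(\hX^\pi_{T_k}))]$, which I would treat by a first-order Taylor expansion of $f$ in its arguments followed by Itô–Taylor expansion of $X_{T_{k+1}}-\hX^\pi_{T_k}$ and of $u(T_{k+1},X_{T_{k+1}})-u(T_k,\hX^\pi_{T_k})$. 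The martingale increments have zero expectation, so only the $O(\Delta_{T_{k+1}})$-drift contributions survive, producing an overall $O(\Delta_{T_{k+1}}^2)$ bound involving up to fourth space derivatives of $u$ (from $\mc L^\mu u$ and its derivatives).

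For the second bound, I would apply Itô's formula and then multiply by $\Delta B_{T_{k+1}}$, giving
\[
\E\!\left[u(T_{k+1},X_{T_{k+1}}^{T_k,\hX^\pi_{T_k},\mu})\,\Delta B_{T_{k+1}}\right] \;=\; \E\!\int_{T_k}^{T_{k+1}}\! v(s,X_s)\,ds \;+\; \E\!\int_{T_k}^{T_{k+1}}\! f(\Theta^\mu_s)\,ds\cdot\Delta B_{T_{k+1}},
\]
using Itô isometry on the $Z\,dB$ part (the $u(T_k,\hX^\pi_{T_k})$ constant drops out by independence and zero mean of $\Delta B_{T_{k+1}}$). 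Subtract $\Delta_{T_{k+1}}v(T_k,\hX^\pi_{T_k})$: the first term gives $\int_{T_k}^{T_{k+1}}\E[v(s,X_s)-v(T_k,\hX^\pi_{T_k})]\,ds$, which by one Itô–Taylor expansion applied to $v$ contributes $O(\Delta_{T_{k+1}}^2)$ with a constant involving $\sup_s\|\mc L^\mu v\|_\infty$, i.e.\ $\sup_s\|u(s,\cdot)\|_{3,\infty}$; for the second term I would expand $f(\Theta^\mu_s)=g(s,X_s)$ one order in Itô–Taylor around $(T_k,\hX^\pi_{T_k})$ and invoke Lemma \ref{lemmathatgivestheorderoftheiterateditointegral} to obtain $\E[g(s,X_s)\Delta B_{T_{k+1}}]=O(s-T_k)$ (the constant term pairs with $\Delta B$ to give zero, and the $\int\nabla g\cdot V\,dB_r$ part paired with $\int_{T_k}^{T_{k+1}}dB_r$ yields the linear-in-time contribution by Itô isometry), so the whole integral is again $O(\Delta_{T_{k+1}}^2)$.

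The main obstacle is bookkeeping the exact order of space derivatives of $u$ that appears in each remainder term, since every Itô–Taylor expansion of $v(s,X_s)$ or of the composite integrand $g(s,X_s)=f(s,X_s,u,v,\langle\mu_s,\varphi_f(\cdot,u)\rangle)$ trades one power of $\Delta_{T_{k+1}}^{1/2}$ for one additional derivative of $u$ (because $v=\mc V^\mu u$). The target orders $\|u\|_{4,\infty}$ and $\|u\|_{3,\infty}$ correspond to the minimal smoothness needed to extract an $O(\Delta_{T_{k+1}}^2)$ factor from a second-order Itô–Taylor expansion; one has to carry out the expansion up to, but not beyond, this order and discard the higher-order iterated integrals using the moment estimates of Lemma \ref{lemmathatgivestheorderoftheiterateditointegral}. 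Boundedness of $V_{0:d}$, $\varphi_i$, $\varphi_f$ and $f$ and their derivatives then absorbs all dependence on the law parameters into the constant $C$.
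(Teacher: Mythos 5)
Your proposal is correct and is essentially the paper's own argument written in integral form: both rest on the identity $f=\mc{V}_{(0)}u$ coming from \eqref{Eq:NonLinearPDE}, on the It\^o--Taylor expansion of Lemma \ref{Theo:ItoTaylorExpansion} (the paper expands $u$ to order $2$ and $\mc{V}_{(0)}u$ to order $1$ for the first bound, and $u$ to order $1$ paired with $I^{T_k,T_{k+1}}_{(j)}$ for the second), and on the iterated-integral estimates of Lemma \ref{lemmathatgivestheorderoftheiterateditointegral}, so your Feynman--Kac representation plus rectangle-rule defect is the same computation packaged differently. The one point to tighten is your step (ii): the Taylor expansion of $f$ there must be taken with the gradient evaluated at the deterministic base point $\Theta^{\mu}_{k}(\hX^{\pi}_{T_k})$ together with a quadratic remainder controlled in $L^{2}$ (which is $O(\Delta_{T_{k+1}})$ since the argument increments are $O(\Delta_{T_{k+1}}^{1/2})$ in $L^{2}$), because in a mean-value form with a random intermediate point the coefficient is correlated with the increments, the martingale parts no longer drop out in expectation, and that route would only yield $O(\Delta_{T_{k+1}}^{3/2})$ for this term.
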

\begin{proof}
The proof of the first estimate follows from a second order Itô-Taylor expansion. Applying Lemma \ref{Theo:ItoTaylorExpansion} with $n=2$ to $u$ and taking the expectation leads to:
\begin{equation}\label{eq:cl1}
\EX{u(T_{k+1},  X^{T_k,y,\mu}_{T_{k+1}})} = u(T_{k},y) + \Delta_{T_k} \mc{V}_{(0)}u(T_{k},  y)  +   \sum_{\beta \in \p\mc{A}_2} \E\left(I_{\beta}^{T_k,T_{k+1}}[\mc{V}_{\beta} u(\cdot,X^{T_k,y,\mu}_{\cdot})] \right)
\end{equation}
and applying again Lemma \ref{Theo:ItoTaylorExpansion} with $n=1$ to $ \mc{V}_{(0)}u$ and taking the expectation gives:
\begin{equation}\label{eq:cl2}
\EX{\mc{V}_{(0)}u(T_{k+1},  X^{T_k,y,\mu}_{T_{k+1}})} = \mc{V}_{(0)}u(T_{k},y)  +   \sum_{\beta \in \p\mc{A}_1} \E\left(I_{\beta}^{T_k,T_{k+1}}[\mc{V}_{\beta} \mc{V}_{(0)}u(\cdot,X^{T_k,y,\mu}_{\cdot})] \right).
\end{equation}
Now, note that since $u$ is the solution of PDE \eqref{Eq:NonLinearPDE} we have $f = \mc{V}_{(0)}u $. So that, by combining \eqref{eq:cl1}, \eqref{eq:cl2} and estimate of Lemma \ref{lemmathatgivestheorderoftheiterateditointegral}, we obtain
\begin{eqnarray*}
&&\left| u(T_k,\hX^{\pi}_{T_k}) - \EX{u(T_{k+1},X^{T_k,\hX^{\pi}_{T_k},\mu}_{T_{k+1}}) + \Delta_{T_{k+1}} f(\bTheta^{\mu,\mu}_{k+1,k}(\hX^{\pi}_{T_k}))} \right|\\
 && \quad \leq 2\sum_{\beta \in \p\mc{A}_2} \E\left(I_{\beta}^{T_k,T_{k+1}}[\mc{V}_{\beta} u(\cdot,X^{T_k,y,\mu}_{\cdot})] \right)\\
&& \quad \leq  C(T,V_{0:d},f) \sup_{s\in [T_k,T_{k+1}]}||u(s,\cdot)||_{4,\infty} \Delta_{T_{k+1}}^2.
\end{eqnarray*}
This concludes the proof of the first estimate. The proof of the second estimate is similar. We first apply an Itô-Taylor expansion of Lemma \ref{Theo:ItoTaylorExpansion} with $n=1$ on $u$. Then, by noticing that $\Delta B_{T_{k+1}} = (I^{T_k,T_{k+1}}_{(1)},\ldots,I^{T_k,T_{k+1}}_{(d)})^T$ and by multiplying by $I^{T_k,T_{k+1}}_{(j)}$ the previous expansion of $u$, and taking the expectation gives, thanks to It\^{o}'s Formula
\begin{equation*}
\EX{u(T_{k+1},  X^{T_k,y,\mu}_{T_{k+1}})I_{(j)}^{T_k,T_{k+1}}} =  \mc{V}_{(j)}  u(T_{k},y) \Delta _{T_{k+1}}  + \sum_{\beta \in \p\mc{A}_1} \E \left(I_{\beta}^{T_k,T_{k+1}}[\mc{V}_{\beta} u(\cdot,X^{T_k,y,\mu}_{\cdot})]I_{(j)}^{T_k,T_{k+1}}\right),
\end{equation*}
 for $j=1,\ldots,d$ and where the first term in the right hand side is the bracket between the stochastic integrals. The last term is controlled by using Lemma \ref{lemmathatgivestheorderoftheiterateditointegral}. Recalling that the $j$-th component of the function $v$ is given by $\mc{V}_{(j)}u$ and reordering the terms we obtain the second inequality.
\end{proof}

We now turn to bound the cubature like error terms \eqref{Eq:Erroru12} and  \eqref{Eq:Errorv12}. This is summarized by:
\begin{claim}\label{Cl:back1-2}
There exist two constants $C$, depending on $d$, $q$, $T$, $m$, and the regularity of $V_{0:d}$ and $\varphi_{0:d}$ (and not on $k$), and $C'$, depending in addition on the regularity of $f$, such that:
\begin{eqnarray*}
&&\bigg| \E \left[u(T_{k+1},X^{T_k,\hX^{\pi}_{T_k},\mu}_{T_{k+1}}) + \Delta_{T_{k+1}} f(\bTheta^{\mu,\mu}_{k+1,k}(\hX^{\pi}_{T_k}))\right] - \E_{\Q_{T_k,T_{k+1}}}\left[u(T_{k+1},X^{T_k,\hX^{\pi}_{T_k},\hmu}_{T_{k+1}})+ \Delta_{T_{k+1}} f(\bTheta^{\hmu,\mu}_{k+1,k}(\hX^{\pi}_{T_k}))\right]\bigg|\\
&&\quad \leq C \bigg( ||u(T_{k+1},\cdot)||_{2,\infty} \left[ \Delta_{T_{k+1}}^{q+1}+ \Delta_{T_{k+1}} N^{- [ (m-1) \wedge 2q]/2} \right] + ||u(T_{k+1},\cdot)||_{m+1,\infty} \Delta_{T_{k+1}}^{(m+1)/2}\\
&&\qquad + ||u(T_{k+1},\cdot)||_{m+2,\infty} \Delta_{T_{k+1}}^{(m+2)/2}\bigg) + C' \bigg(M_u(2,T_{k+1}) \left[ \Delta_{T_{k+1}}^{q+1}+\Delta_{T_{k+1}} N^{- [ (m-1) \wedge 2q]/2} \right]\\
&&\qquad +  M_u(m+1,T_{k+1}) \Delta_{T_{k+1}}^{(m+1)/2} +M_u(m+2,T_{k+1}) \Delta_{T_{k+1}}^{(m+2)/2}\bigg)\\
&&\left| \EX{ u\left(T_{k+1},X_{T_{k+1}}^{T_k,\hX^{\pi}_{T_k},\mu}\right) \Delta B_{T_{k+1}} }-\ApproxEx[T_k,T_{k+1}]{ u\left(T_{k+1},X_{T_{k+1}}^{T_k,\hX^{\pi}_{T_k},\hmu}\right) \Delta B_{T_{k+1}} }\right|\\
&& \quad \leq C \bigg( ||u(T_{k+1},\cdot)||_{3,\infty} \left[ \Delta_{T_{k+1}}^{q+1}+ \Delta_{T_{k+1}}^2 N^{- [ (m-1) \wedge 2q]/2} \right] + ||u(T_{k+1},\cdot)||_{m,\infty} \Delta_{T_{k+1}}^{(m+1)/2} \\
&& \qquad + ||u(T_{k+1},\cdot)||_{m+1,\infty} \Delta_{T_{k+1}}^{(m+2)/2}\bigg)
\end{eqnarray*}
\end{claim}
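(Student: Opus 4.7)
The plan is to reduce Claim \ref{Cl:back1-2} to the one-step forward analysis developed in Section 7.1. Both differences have the form of a one-step cubature error applied to a smooth functional of $X^{T_k,\cdot,\cdot}_{T_{k+1}}$, possibly weighted by a Brownian increment. I would insert the artificial intermediate operator $\tilde{P}^{\hmu}_{T_k,T_{k+1}}$ and split
\[
P^{\mu}_{T_k,T_{k+1}} - Q^{\hmu}_{T_k,T_{k+1}} = \bigl(P^{\mu}_{T_k,T_{k+1}} - \tilde{P}^{\hmu}_{T_k,T_{k+1}}\bigr) + \bigl(\tilde{P}^{\hmu}_{T_k,T_{k+1}} - Q^{\hmu}_{T_k,T_{k+1}}\bigr),
\]
then invoke Claim \ref{c1} on the first piece and Claim \ref{c2} on the second.

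For the first estimate, I would first observe that $\bTheta^{\mu,\mu}_{k+1,k}(\hX^{\pi}_{T_k})$ and $\bTheta^{\hmu,\mu}_{k+1,k}(\hX^{\pi}_{T_k})$ coincide in the time, $v$-argument and $\la\mu_{T_k},\varphi_f[\cdot,u(T_k,\cdot)]\ra$-argument entries; only the forward process and the value $u(T_{k+1},X^{T_k,\cdot,\cdot}_{T_{k+1}})$ change. Setting
\[
\tilde{f}(T_{k+1},x) := f\bigl(T_{k+1},x,u(T_{k+1},x),v(T_k,\hX^{\pi}_{T_k}),\la\mu_{T_k},\varphi_f[\cdot,u(T_k,\cdot)]\ra\bigr),
\]
the $f$-contribution rewrites as $\Delta_{T_{k+1}}(P^{\mu}_{T_k,T_{k+1}} - Q^{\hmu}_{T_k,T_{k+1}})\tilde f(T_{k+1},\cdot)(\hX^{\pi}_{T_k})$, so both contributions fit the same template. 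Applying Claim \ref{c1} with $\eta=\mu$ produces an Itô--Taylor residue that Claim \ref{c4} bounds by $\Delta_{T_{k+1}}^{q+1}$, plus a comparison term $|\la\mu_{T_k},(\mL^{\mu})^p\varphi_i\ra - \la\hmu_{T_k},(\mL^{\hmu})^p\varphi_i\ra|$ that I would handle by invoking the already established forward estimate \eqref{Eq:ErrorforSB} at time $T_k$ applied to the smooth functions $(\mL^{\mu})^p\varphi_i$; this is what produces the $N^{-[(m-1)\wedge 2q]/2}$ factor. Claim \ref{c2} then delivers the remaining $\Delta_{T_{k+1}}^{(m+1)/2}$ and $\Delta_{T_{k+1}}^{(m+2)/2}$ terms. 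The key book-keeping is that for the $u$-part the prefactor is simply $||u(T_{k+1},\cdot)||_{l,\infty}$, whereas differentiating $\tilde f$ via the Faà di Bruno chain rule produces sums of products of derivatives of $u$ of total order at most $l$, which is precisely the quantity $M_u(l,T_{k+1})$ of \eqref{Eq:DefinitionM}; this accounts for the two parallel halves of the stated bound.

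The second estimate follows the same three-way decomposition, now applied to the $\Delta B_{T_{k+1}}$-weighted functional; the artificial-dynamics and cubature intermediates both respect this weighting since the Brownian increment is itself the first iterated integral $I^{T_k,T_{k+1}}_{(j)}$. Adapting Claims \ref{c1} and \ref{c2} is routine once each residual iterated integral is paired with the additional $I^{T_k,T_{k+1}}_{(j)}$ factor and the second inequality in Lemma \ref{lemmathatgivestheorderoftheiterateditointegral} is used to gain an extra $\Delta_{T_{k+1}}^{1/2}$ in the residues; the resulting indices on $||u||_{\cdot,\infty}$ and powers of $\Delta_{T_{k+1}}$ match those in the second half of the claim. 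The main technical obstacle is the tight book-keeping of constants: one must verify that \eqref{Eq:ErrorforSB} can indeed be invoked uniformly over the family $\{(\mL^{\mu})^p\varphi_i\}_{0\leq i \leq d,\ 0\leq p\leq q-1}$ with constants depending only on $\|\varphi_{0:d}\|_{2q+m+2,\infty}$, and that the chain-rule expansion of derivatives of $\tilde f$ is captured tightly by $M_u$ without hidden dependence on higher derivatives of $f$. The Brownian-weighted variant adds only the complication of pairing iterated integrals correctly via Lemma \ref{lemmathatgivestheorderoftheiterateditointegral}, which is the sole point where a subtle power of $\Delta_{T_{k+1}}$ could be lost or gained.
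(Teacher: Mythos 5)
Your decomposition through the artificial-dynamics operator $\tilde{P}^{\hmu}_{T_k,T_{k+1}}$, the Itô--Taylor bound from Claim \ref{c4}, the use of \eqref{Eq:ErrorforSB} on the functions $(\mL^{\mu})^p\varphi_i$, and the $M_u(r,\cdot)$ bound on the derivatives of the composed function $\tilde f$ are exactly what the paper packages as Lemma \ref{Lem:OneStepAll} (see \eqref{decompote} and its proof); the paper's proof of Claim \ref{Cl:back1-2} then simply invokes \eqref{Eq:Lem:OneStepAll1} for $u$ and $\tilde f$, and \eqref{Eq:Lem:OneStepAll2} for the $\Delta B$-weighted estimate, so your argument is essentially theirs, unrolled. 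One small reference slip: for the $\Delta B_{T_{k+1}}$-weighted estimate you should be using the \emph{first} inequality of Lemma \ref{lemmathatgivestheorderoftheiterateditointegral} (pairing with $I^{T_k,T_{k+1}}_{(i)}$), not the second (which pairs with $I^{T_k,T_{k+1}}_{(0,i)}$ and only enters the second-order scheme through $\zeta_k$); and the gain is not a flat $\Delta_{T_{k+1}}^{1/2}$ in the residues but rather an extra factor $\Delta_{T_{k+1}}$ in the Euler-like piece and a drop of one order of differentiability on $u$ in the cubature piece, which is what Lemmas \ref{OnestepEuler} (cf. \eqref{OnestepEuler2}) and \ref{OnestepCub} (cf. \eqref{Eq:OnestepCub2}) actually record.
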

\begin{proof}
Note that the $r^{{\rm th}}$ derivative of the function $y\mapsto f(\cdot,y,u(\cdot,y),\cdot,\cdot)$ is bounded by $C'M_u(r,\cdot)$ defined by \eqref{Eq:DefinitionM}. Then, the proof of the first assertion follows from  \eqref{Eq:Lem:OneStepAll1} in Lemma \ref{Lem:OneStepAll} applied to $u$ and $f$ and the second assertion from  \eqref{Eq:Lem:OneStepAll2} in Lemma \ref{Lem:OneStepAll} applied to $u$.
\end{proof}

Finally, an estimate on the propagation error \eqref{Eq:Erroru13}  is given by:
\begin{claim}\label{Cl:back1-3}
There exists a constant $C$ depending on $d$, $q$, $T$, $m$, and the regularity of $V_{0:d}$ and $\varphi_{0:d}$ (and not on $k$) such that:
\begin{eqnarray*}
&&\left|\E_{\Q_{T_k,T_{k+1}}} \bigg[u(T_{k+1},X^{T_k,\hX^{\pi}_{T_k},\hmu}_{T_{k+1}}) - \hu^1(T_{k+1},X^{T_k,\hX^{\pi}_{T_k},\hmu}_{T_{k+1}})  + \Delta_{ T_{k+1}} \left( f(\bTheta^{\hmu,\mu}_{k+1,k}(\hX^{\pi}_{T_k})) -  f(\ApproxVarT{\Theta}_{k+1,k})\right)\bigg]\right|\\
&&\quad \leq (1+C\Delta_{T_{k+1}} )\mE^1_{u}(k+1) + C\bigg(||u(T_{k+1},\cdot)||_{m+2,\infty}\Delta_{T_{k+1}} N^{-[(m-1)/2]\wedge q} \\
&&\qquad\quad  +   ||u(T_{k+1},\cdot)||_{3,\infty} \left[\Delta_{T_{k+1}}^2 + \Delta_{T_{k+1}}^{q+1} +  \Delta_{T_{k+1}}^2 N^{- [ (m-1) \wedge 2q]/2}\right] \\
&&\qquad \quad + ||u(T_{k+1},\cdot)||_{m+1,\infty} \Delta_{T_{k+1}}^{(m+1)/2}+ ||u(T_{k+1},\cdot)||_{m+2,\infty} \Delta_{T_{k+1}}^{(m+2)/2}\bigg)\notag
\end{eqnarray*}
\end{claim}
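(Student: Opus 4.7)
The plan is to split the expression by linearity of $\E_{\Q_{T_k,T_{k+1}}}$ into the pure $u$-propagation
$A := \E_{\Q_{T_k,T_{k+1}}}[(u-\hu^1)(T_{k+1},X_{T_{k+1}}^{T_k,\hX^\pi_{T_k},\hmu})]$
and the driver difference
$B := \Delta_{T_{k+1}}\E_{\Q_{T_k,T_{k+1}}}[f(\bTheta^{\hmu,\mu}_{k+1,k}(\hX^\pi_{T_k}))-f(\hTheta^{\pi,1}_{k+1,k})]$.
Since under $\Q_{T_k,T_{k+1}}$ the random variable $X_{T_{k+1}}^{T_k,\hX^\pi_{T_k},\hmu}$ takes the values $\hX^{\pi*j}_{T_{k+1}}$ with weights $\lambda_j$, one obtains $|A|\leq\mE^1_u(k+1)$ at once. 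For $B$, Lipschitz regularity of $f$ applied coordinate-wise, together with the observations that the spatial argument coincides $\Q$-almost surely in both $\bTheta^{\hmu,\mu}_{k+1,k}$ and $\hTheta^{\pi,1}_{k+1,k}$ and that the $v$- and law-arguments are $\Q$-deterministic, yields
\[|B| \leq C\Delta_{T_{k+1}}\bigl[\mE^1_u(k+1) + |(v-\hv^1)(T_k,\hX^\pi_{T_k})| + |\la\mu_{T_k},\varphi_f(\cdot,u(T_k,\cdot))\ra - F^1(T_{k+1},\hmu_{T_{k+1}})|\bigr].\]
The $u$-contribution to $B$ combines with $|A|$ to yield the prefactor $(1+C\Delta_{T_{k+1}})\mE^1_u(k+1)$.

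For the law difference I telescope through the intermediates $\la\hmu_{T_{k+1}},\varphi_f(\cdot,u(T_k,\cdot))\ra$ and $\la\hmu_{T_{k+1}},\varphi_f(\cdot,u(T_{k+1},\cdot))\ra$. The global piece $|\la\mu_{T_k}-\hmu_{T_{k+1}},\varphi_f(\cdot,u(T_k,\cdot))\ra|$ splits further through $\hmu_{T_k}$: the part $|\la\mu_{T_k}-\hmu_{T_k},\cdot\ra|$ uses \eqref{Eq:ErrorforSB} with test function $\varphi_f(\cdot,u(T_k,\cdot))$ (whose $C^{m+2}_b$ regularity, via the solution $u$ of \eqref{Eq:NonLinearPDE}, explains the factor $\|u(T_{k+1},\cdot)\|_{m+2,\infty}$) and yields $N^{-[(m-1)/2]\wedge q}$; the one-step increment $|\la\hmu_{T_{k+1}}-\hmu_{T_k},\cdot\ra|$ is $O(\Delta_{T_{k+1}})$ by cubature consistency. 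The time-regularity piece $|\la\hmu_{T_{k+1}},\varphi_f(\cdot,u(T_k,\cdot))-\varphi_f(\cdot,u(T_{k+1},\cdot))\ra|$ is $O(\Delta_{T_{k+1}})$ via the time-regularity of $u$ coming from the PDE \eqref{Eq:NonLinearPDE} (Lemma \ref{PropPDE}). The last piece is bounded by $C\mE^1_u(k+1)$ via Lipschitz continuity of $\varphi_f$ in its second argument. Multiplication by $\Delta_{T_{k+1}}$ produces the announced $\|u(T_{k+1},\cdot)\|_{m+2,\infty}\Delta_{T_{k+1}}N^{-[(m-1)/2]\wedge q}$ contribution, an $O(\Delta_{T_{k+1}}^2)$ term, and an $O(\Delta_{T_{k+1}})\mE^1_u(k+1)$ term absorbed into the prefactor.

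For the $v$-difference I apply the decomposition \eqref{Eq:Errorv11}--\eqref{Eq:Errorv13} to $\Delta_{T_{k+1}}(v-\hv^1)(T_k,\hX^\pi_{T_k})$ and invoke the second estimates of Claims \ref{Cl:back1-1} and \ref{Cl:back1-2}: the scheme part yields $C\|u(T_{k+1},\cdot)\|_{3,\infty}\Delta_{T_{k+1}}^2$ and the cubature part yields $\|u\|_{3,\infty}[\Delta_{T_{k+1}}^{q+1}+\Delta_{T_{k+1}}^2 N^{-[(m-1)\wedge 2q]/2}]$, $\|u\|_{m+1,\infty}\Delta_{T_{k+1}}^{(m+1)/2}$ and $\|u\|_{m+2,\infty}\Delta_{T_{k+1}}^{(m+2)/2}$. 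Dividing through by $\Delta_{T_{k+1}}$ to isolate $|v-\hv^1|$ and then multiplying back by the $\Delta_{T_{k+1}}$ prefactor of $B$ reproduces exactly these terms in the announced bound.

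\textbf{Main obstacle.} The most delicate step is controlling the residual $\hu^1$-propagation through $\hv^1$. Since $\hv^1(T_k,\hX^\pi_{T_k})$ is a weighted sum of the $\hu^1(T_{k+1},\hX^{\pi*j}_{T_{k+1}})$ with coefficients of order $\Delta_{T_{k+1}}^{-1/2}$, the naive bound $|\E_{\Q_{T_k,T_{k+1}}}[(u-\hu^1)(T_{k+1},\cdot)\Delta B_{T_{k+1}}]|\leq C\sqrt{\Delta_{T_{k+1}}}\mE^1_u(k+1)$ produces, after multiplication by the $\Delta_{T_{k+1}}$ from the $f$-prefactor, a residual $\sqrt{\Delta_{T_{k+1}}}\mE^1_u(k+1)$ contribution. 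Absorbing it cleanly into $(1+C\Delta_{T_{k+1}})\mE^1_u(k+1)$ - rather than producing a $(1+C\sqrt{\Delta_{T_{k+1}}})$ prefactor that would blow up to $e^{C\sqrt{N}}$ upon iteration across $N$ steps - requires invoking the cubature moment condition $\sum_j\lambda_j\omega_j(1)=0$ (valid because $m\geq 1$), so that only the variations of $u-\hu^1$ across descendants of a common parent, not their absolute size, enter the sum.
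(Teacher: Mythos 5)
Your overall decomposition (pure $u$-propagation $A$, driver difference $B$, Lipschitz in each slot, forward estimate \eqref{Eq:ErrorforSB} for the law piece, Claims \ref{Cl:back1-1}--\ref{Cl:back1-2} for the $v$-propagation) is the same route the paper takes, and most of it is sound. However, your \textbf{Main obstacle} paragraph correctly identifies the dangerous step but proposes a resolution that does not actually produce the claimed $(1+C\Delta_{T_{k+1}})$ prefactor.

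Here is the gap concretely. Your plan is to bound $|v-\hv^1|$ in isolation, extract its propagation piece $\frac{1}{\Delta_{T_{k+1}}}\E_{\Q}[(u-\hu^1)\Delta B_{T_{k+1}}]$, and only then reinsert the $\Delta_{T_{k+1}}$ prefactor from $B$. At that stage the best you can get for the propagation piece is $\Delta_{T_{k+1}}^{-1}\cdot\sqrt{\Delta_{T_{k+1}}}\,\mE^1_u(k+1)$, giving, after multiplication by $\Delta_{T_{k+1}}$, a residual $C\sqrt{\Delta_{T_{k+1}}}\,\mE^1_u(k+1)$. Invoking $\sum_j\lambda_j\omega_j(1)=0$ lets you rewrite $\E_{\Q}[(u-\hu^1)\Delta B]$ as a covariance, but $\sup_\pi|(u-\hu^1)(T_{k+1},\hX_{T_{k+1}}^{\pi})-\text{avg}|$ is still only controllable by $\mE^1_u(k+1)$ (there is no available control on the oscillation of $u-\hu^1$ across siblings, since $\hu^1$ is not a smooth function). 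Cauchy--Schwarz on that covariance again produces $\sqrt{\Delta_{T_{k+1}}}\,\mE^1_u(k+1)$, not $\Delta_{T_{k+1}}\,\mE^1_u(k+1)$. So the moment condition alone does not remove the problematic $\sqrt{\Delta}$.

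The paper avoids this by \emph{not} separating $A$ from the $v$-propagation part of $B$. After the mean-value expansion of $f$ and substitution of the $v$-expansion \eqref{Eq:Errorv11}--\eqref{Eq:Errorv13}, both the $u$-propagation and the $v$-propagation pieces are gathered under a single expectation,
\[
\E_{\Q_{T_k,T_{k+1}}}\!\left[\bigl(u-\hu^1\bigr)(T_{k+1},X_{T_{k+1}}^{T_k,\hX^\pi_{T_k},\hmu})\bigl(1+\Psi_1\Delta_{T_{k+1}}+\Psi_2\,\Delta B_{T_{k+1}}\bigr)\right],
\]
and Cauchy--Schwarz is applied to \emph{this} product. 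The moment conditions $\E_{\Q}[\Delta B_{T_{k+1}}]=0$ and $\E_{\Q}[|\Delta B_{T_{k+1}}|^2]=O(\Delta_{T_{k+1}})$ (valid since $m\ge 3$) then give
\[
\bigl\|1+\Psi_1\Delta_{T_{k+1}}+\Psi_2\,\Delta B_{T_{k+1}}\bigr\|_{L^2(\Q)}
=\sqrt{1+O(\Delta_{T_{k+1}})}
\le 1+C\Delta_{T_{k+1}},
\]
because the cross term $\E_{\Q}[\Delta B]$ vanishes and $(\Delta B)^2$ contributes $O(\Delta_{T_{k+1}})$ rather than $O(\sqrt{\Delta_{T_{k+1}}})$. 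That is the step that upgrades the prefactor from $1+C\sqrt{\Delta}$ to $1+C\Delta$. Your proposal, having committed to bounding $A$ and $|v-\hv^1|$ independently, cannot recover this, so the Gronwall step at the end of the proof would blow up.

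Two minor remarks: (i) the coordinate-wise mean-value variable $\Psi_2$ multiplies the $\Q$-deterministic quantity $(v-\hv^1)(T_k,\hX^\pi_{T_k})$, so it enters the combined expression only through the scalar $\E_{\Q}[\Psi_2]$ -- this is what justifies absorbing it as a constant coefficient of $\Delta B$ inside the expectation; (ii) your telescoping of the law term through $T_k$ is more elaborate than the paper's (which works directly at $T_{k+1}$ via \eqref{Eq:ity1}--\eqref{Eq:ity2}), but is not the source of the problem.
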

\begin{proof}

Let us start by expanding the $f$ term. We get from the mean value theorem that there exist three random variable $\Psi_1,\Psi_2,\Psi_3$, respectively bounded by $\| \partial_{y'}  f \|_{\infty}$, $\| \partial_{z}  f \|_{\infty}$  and $\| \partial_{w}  f \|_{\infty} $ almost surely, depending on each argument of $\ApproxVarT{\Theta}_{k+1,k}$ and $\bTheta^{\hmu,\mu}_{k+1,k}$ such that:
\begin{eqnarray*}
&&\Delta_{ T_{k+1}} \left( f(\bTheta^{\hmu,\mu}_{k+1,k}(\hX^{\pi}_{T_k})) -  f(\ApproxVarT{\Theta}_{k+1,k})\right)\\
&& =   \Delta_{T_{k+1}} \Psi_1 \left(u(T_{k+1},X^{T_k,\hX^{\pi}_{T_k},\hmu}_{T_{k+1}}) - \hu^1(T_{k+1},X^{T_k,\hX^{\pi}_{T_k},\hmu}_{T_{k+1}})\right) + \Delta_{T_{k+1}} \Psi_2 \left(v(T_k,\hX^{\pi}_{T_k})  - \hv^1 (T_k,\hX^{\pi}_{T_k}) \right)\notag\\
&& \qquad + \Delta_{T_{k+1}} \Psi_3 \left(\la \mu_{T_{k+1}}, \varphi_f[\cdot,u(T_{k+1},\cdot)]\ra - \la \hmu_{T_{k+1}}, \varphi_f[\cdot,\hu^1(T_{k+1},\cdot)]\ra\right).
\end{eqnarray*}
Now,  we can use the error expansion \eqref{Eq:Errorv11}, \eqref{Eq:Errorv12} and \eqref{Eq:Errorv13} of $\Delta_{T_{k+1}} \left(v(T_k,\hX^{\pi}_{T_k})  - \hv^1 (T_k,\hX^{\pi}_{T_k}) \right)$ together with the second assertion of Claims \ref{Cl:back1-1} and \ref{Cl:back1-2} to get
\begin{eqnarray}
&&\bigg|\E_{\Q_{T_k,T_{k+1}}} \bigg[u(T_{k+1},X^{T_k,\hX^{\pi}_{T_k},\hmu}_{T_{k+1}}) - \hu^1(T_{k+1},X^{T_k,\hX^{\pi}_{T_k},\hmu}_{T_{k+1}})  + \Delta_{ T_{k+1}} \left( f(\bTheta^{\hmu,\mu}_{k+1,k}(\hX^{\pi}_{T_k})) -  f(\ApproxVarT{\Theta}_{k+1,k})\right)\bigg]\bigg|\notag\\
&&\leq \left| \E_{\Q_{T_k,T_{k+1}}} \left[ \left(u(T_{k+1},X^{T_k,\hX^{\pi}_{T_k},\hmu}_{T_{k+1}}) - \hu^1(T_{k+1},X^{T_k,\hX^{\pi}_{T_k},\hmu}_{T_{k+1}})\right) (1+\Psi_1 \Delta_{T_{k+1}}+\Psi_2\Delta B_{T_{k+1} })\right]\right|\notag\\
&&\quad +  C  ||u(T_{k+1},\cdot)||_{3,\infty} \left[\Delta_{T_{k+1}}^2 + \Delta_{T_{k+1}}^{q+1}+ \Delta_{T_{k+1}}^2 N^{- [ (m-1) \wedge 2q]/2} \right] + C ||u(T_{k+1},\cdot)||_{m+1,\infty} \Delta_{T_{k+1}}^{(m+1)/2}\notag\\
&&\quad  + C||u(T_{k+1},\cdot)||_{m+2,\infty} \Delta_{T_{k+1}}^{(m+2)/2}  + \Delta_{T_{k+1}}\| \partial_{w} f \|_{\infty} \left|\la \mu-\hmu_{T_{k+1}}, \varphi_f[\cdot,u(T_{k+1},\cdot)]\ra \right|\notag\\
&& \quad  + \Delta_{T_{k+1}}\| \partial_{w} f \|_{\infty} \left| \la \hmu_{T_{k+1}}, \varphi_f[\cdot,u(T_{k+1},\cdot)] - \varphi_f[\cdot,\hu^1(T_{k+1},\cdot)]\ra\right|.\label{Eq:CS}
\end{eqnarray}
Note that from the forward result \eqref{Eq:ErrorforSB} in Theorem \ref{MR}, we have
\begin{equation}\label{Eq:ity1}
\Delta_{T_{k+1}}\| \partial_{w} f \|_{\infty} \left|\la \mu-\hmu_{T_{k+1}}, \varphi_f[\cdot,u(T_{k+1},\cdot)]\ra \right| \leq C(||u(T_{k+1},\cdot)||_{m+2,\infty}) \Delta_{T_{k+1}} N^{-((m-1)\wedge 2q)/2},
\end{equation} 
while using the regularity of $\varphi_f$ and the definition of $\hmu$ gives,
\begin{equation}\label{Eq:ity2}
\Delta_{T_{k+1}}\| \partial_{w} f \|_{\infty}  \left| \la \hmu_{T_{k+1}}, \varphi_f[\cdot,u(T_{k+1},\cdot)] - \varphi_f[\cdot,\hu^1(T_{k+1},\cdot)]\ra\right| \leq C' \Delta_{T_{k+1}}\mE^1_u (k+1) .
\end{equation}
The Claim follows by applying the Cauchy-Schwartz inequality on the first term in the right hand side of \eqref{Eq:CS} and plugging \eqref{Eq:ity1} and \eqref{Eq:ity2} in \eqref{Eq:CS}.
\end{proof}

We can now analyze the local error at step $k$. By plugging the estimates from Claims \ref{Cl:back1-1}, \ref{Cl:back1-2} and \ref{Cl:back1-3} in the expansion \eqref{Eq:Erroru11}, \eqref{Eq:Erroru12} and  \eqref{Eq:Erroru13} of $u-\hu$ we obtain that:
\begin{equation}\label{Eq:propa1}
\mE^1_u(k)  \leq  \left(1+C\Delta_{T_{k+1}}\right)\mE^1_u(k+1) + \bar{\epsilon}(k+1),
\end{equation}
with
\begin{eqnarray}
\bar{\epsilon}(k+1) &= &  C\bigg( \sup_{s\in [T_k;T_{k+1}]}||u(s,\cdot)||_{2,\infty} \Delta_{T_{k+1}}^{4}+ ||u(T_{k+1},\cdot)||_{2,\infty}\Delta_{T_{k+1}} N^{- [ (m-1) \wedge 2q]/2} \notag\\
&&\quad + ||u(T_{k+1},\cdot)||_{m+1,\infty} \Delta_{T_{k+1}}^{(m+1)/2} + ||u(T_{k+1},\cdot)||_{m+2,\infty} \Delta_{T_{k+1}}^{(m+2)/2}\notag\\
&&\quad +  ||u(T_{k+1},\cdot)||_{3,\infty} \left[\Delta_{T_{k+1}}^2 + \Delta_{T_{k+1}}^{q+1}+ \Delta_{T_{k+1}}^2 N^{- [ (m-1) \wedge 2q]/2} \right]\bigg)\label{Eq:barepsilon}\\
&& + C'\bigg( M_u(m+1,T_{k+1}) \Delta_{T_{k+1}}^{(m+1)/2} +M_u(m+2,T_{k+1}) \Delta_{T_{k+1}}^{(m+2)/2}\notag\\
&&\quad + M_u(2,T_{k+1}) \left[ \Delta_{T_{k+1}}^{q+1}+\Delta_{T_{k+1}} N^{- [ (m-1) \wedge 2q]/2} \right]\bigg).\notag
\end{eqnarray}

Under \SB, we have from Lemma \ref{RegNonLinearPDE} that for all $n \in \mathbb{N}^*$ there exists a constant $K$, depending on the regularity of $V_{0:d}$ and $\phi$, such that for all $k \in \{0,\cdots,N-1\}$, 
\begin{equation*}
M_u(n,T_{k+1}) + \sup_{s\in [T_k,T_{k+1}]}||u(s,\cdot)||_{n,\infty}\leq K.
\end{equation*}
Therefore, Gronwall's Lemma applied to \eqref{Eq:propa1}  and the definition of $\Delta_{T_k}$ implies
\begin{equation}
\mE_u^1 \leq CN^{-1}. \label{Ineq:FirstOrderU}
\end{equation}

Moreover, Claims \ref{Cl:back1-1} and \ref{Cl:back1-2} and expansion \eqref{Eq:Errorv11}, \eqref{Eq:Errorv12} and \eqref{Eq:Errorv13} show that:
 \begin{eqnarray*}
&&\Delta_{T_{k+1}} \mE_v^1(k+1) \leq  C \bigg( \sup_{s\in [T_k,T_{k+1}]}||u(s,\cdot)||\Delta_{T_{k+1}}^2 + ||u(T_{k+1},\cdot)||_{3,\infty} \left[ \Delta_{T_{k+1}}^{q+1}+  \Delta_{T_{k+1}}^2 N^{- [ (m-1) \wedge 2q]/2} \right]\\
&&\hphantom{ \Delta_{T_{k+1}} \mE_v^1(k+1) \leq}  + ||u(T_{k+1},\cdot)||_{m+1,\infty} \Delta_{T_{k+1}}^{(m+1)/2} + C||u(T_{k+1},\cdot)||_{m+2,\infty} \Delta_{T_{k+1}}^{(m+2)/2}\bigg) + \Delta_{T_{k+1}}^{1/2}\mE_u^1(k+1),
\end{eqnarray*}
which together with \eqref{Ineq:FirstOrderU} imply 
\[\Delta_{T_k}^{1/2}\mE_v^1 \leq CN^{-1},\]
and the result holds.\\

\emph{(2) Proof of the order of convergence for the second order algorithm (Algorithm \ref{alg:SecondOrder}).}\\

Let $k\in \{0, \ldots, N-2\}$. We can expand as before the errors on the $u$ and $v$ approximations as:

\begin{align}
u(T_k ,\hX^{\pi}_{T_k})  - \hu^{2}(T_k,\hX^{\pi}_{T_k}) & = \ApproxEx[T_k,T_{k+1}]{u(T_{k+1},X^{T_k,\hX^{\pi}_{T_k},\hmu}_{T_{k+1}}) - \hu^2(T_{k+1},X^{T_k,\hX^{\pi}_{T_k},\hmu}_{T_{k+1}})} \notag\\
& \qquad  + \frac{1}{2}\Delta_{T_{k+1}} \ApproxEx[T_k,T_{k+1}]{ f(\Theta_{k+1}^{\mu}(X_{T_{k+1}}^{T_k,\hX^{\pi}_{T_k},\mu}))-  f(\ApproxTwoVarT{\Theta}_{k+1})} \label{Eq:Erroru23} \\
& \quad + \frac{1}{2}\Delta_{ T_{k+1}} [ f(\Theta_{k}^{\mu}(\hX^{\pi}_{T_k})) -  f(\PredictorVarT{\Theta}^{\pi}_{k})] \label{Eq:Erroru24}\\
& \quad + \epsilon_{\hu^2,k}^{s}(\pi)+ \epsilon_{\hu^2,k}^{c}(\pi),\notag
\end{align}
where
\begin{align}
\epsilon_{\hu^2,k}^{s}(\pi)&=  u(T_k,\hX^{\pi}_{T_k}) - \EX{u(T_{k+1},X^{T_k,\hX^{\pi}_{T_k},\mu}_{T_{k+1}}) + \frac{\Delta_{ T_{k+1}}}{2} \left( f(\Theta_{k}^{\mu}(\hX^{\pi}_{T_k}))+ f(\Theta_{k+1}^{\mu}(X_{T_{k+1}}^{T_k,\hX^{\pi}_{T_k},\mu}))\right)} \label{Eq:Erroru21}\\
\epsilon_{\hu^2,k}^{c}(\pi)&= \EX { u(T_{k+1},X^{T_k,\hX^{\pi}_{T_k},\mu}_{T_{k+1}}) } - \ApproxEx[T_k,T_{k+1}] { u(T_{k+1},X^{T_k,\hX^{\pi}_{T_k},\hmu}_{T_{k+1}}) } \notag\\
&\quad + \frac{\Delta_{ T_{k+1}}}{2} \left(\EX {   f(\Theta_{k+1}^{\mu}(X_{T_{k+1}}^{T_k,\hX^{\pi}_{T_k},\mu}))} - \ApproxEx[T_k,T_{k+1}] { f(\Theta_{k+1}^{\mu}(X_{T_{k+1}}^{T_k,\hX^{\pi}_{T_k},\mu}))}  \right). \label{Eq:Erroru22} 
\end{align}

Similarly, we have
\begin{align}
v(T_k,\hX^{\pi}_{T_k})  - \hv^2 (T_k,\hX^{\pi}_{T_k})  & = \E_{\Q_{T_k,T_{k+1}}} \left( \left[ u\left(T_{k+1},X_{T_{k+1}}^{T_k,\hX^{\pi}_{T_k},\hmu}\right) - \hu^2\left(T_{k+1},X_{T_{k+1}}^{T_k,\hX^{\pi}_{T_k},\hmu}\right) \right]  \zeta_{k+1}\right)\notag\\
& \quad\qquad  + \ApproxEx[T_k,T_{k+1}]{\left( f[\Theta_{k+1}^{\mu}(X_{T_{k+1}}^{T_k,\hX^{\pi}_{T_k},\mu})]  - f(\ApproxTwoVarT{\Theta}_{k+1}) \right) \Delta_{T_{k+1}}\zeta_{k+1} }\label{Eq:Errorv23}\\
& \quad + \epsilon_{\hv^2,k}^{s}(\pi)+ \epsilon_{\hv^2,k}^{c}(\pi),\notag
\end{align}
with
\begin{align}
\epsilon_{\hv^2,k}^{s}(\pi)&= v(T_k,\hX^{\pi}_{T_k}) - \E \left( \left[u(T_{k+1},X_{T_{k+1}}^{T_k,\hX^{\pi}_{T_k},\mu})+\Delta_{T_{k+1}}f[\Theta_{k+1}^{\mu}(X_{T_{k+1}}^{T_k,\hX^{\pi}_{T_k},\mu})] \right] \zeta_{k+1} \right) \label{Eq:Errorv21} \\
\epsilon_{\hv^2,k}^{c}(\pi)&=   \EX{u(T_{k+1},X_{T_{k+1}}^{T_k,\hX^{\pi}_{T_k},\mu})\zeta_{k+1}} -\ApproxEx[T_k,T_{k+1}]{u(T_{k+1},X_{T_{k+1}}^{T_k,\hX^{\pi}_{T_k},\hmu})\zeta_{k+1}}  \notag\\
& \quad\qquad  + \EX{f[\Theta_{k+1}^{\mu}(X_{T_{k+1}}^{T_k,\hX^{\pi}_{T_k},\mu})]\Delta_{T_{k+1}}\zeta_{k+1}} -\ApproxEx[T_k,T_{k+1}]{f[\Theta_{k+1}^{\mu}(X_{T_{k+1}}^{T_k,\hX^{\pi}_{T_k},\mu})]\Delta_{T_{k+1}}\zeta_{k+1}}.\label{Eq:Errorv22}
\end{align}

We identify, as for the first order expansion, some error terms corresponding to the scheme error \eqref{Eq:Erroru21} and \eqref{Eq:Errorv21}, generalized cubature errors \eqref{Eq:Erroru22} and \eqref{Eq:Errorv22} and propagation errors \eqref{Eq:Erroru23}, \eqref{Eq:Errorv23}. Some important changes are clear from the expansion: we have in addition a prediction error term \eqref{Eq:Erroru24} reflecting the fact that we perform a new intermediate step, and we have some $f$ term in \eqref{Eq:Errorv23}, adding to the propagation error.

The proof for the second order approximation is then similar to its first order equivalent, but we will have to consider the mentioned additional terms. In particular, the fact that the second order approximation $\hv^2(\hX^\pi_{T_k})$ includes the term $f(\ApproxTwoVarT{\Theta}_{T_{k+1}})$, adds an additional coupling effect. With this in mind, and in order to simplify the analysis, we introduce the following quantity
\[\mE_{f}(k) = \max_{\pi \in \mc{S}(k)} | f(\Theta_k^\mu(\hX^{\pi}_{T_k})) - f(\ApproxTwoVarT{\Theta}_{k})  |\]
and we will analyze the dynamics of the sum of errors at step $k$, $\mc{E}_u^2(k) + \Delta_{T_{k}}\mE_{f}(k) $.

To this aim, we will bound separately the scheme and cubature errors. Each bound is summarized in a Claim (respectively Claims \ref{Cl:back2-1}, \ref{Cl:back2-2} below). Then, we will conclude with a Gronwall argument. 

The scheme error terms \eqref{Eq:Erroru21} and \eqref{Eq:Errorv21} and the generalized cubature errors \eqref{Eq:Erroru22} and \eqref{Eq:Errorv22} are treated similarly as in the first order scheme. We show this in Claims \ref{Cl:back2-1} and \ref{Cl:back2-2}.

\begin{claim}\label{Cl:back2-1}
There exists a constant $C$, depending on the regularity of $V_{0:d}$ and $f$ (and not on $k$) such that:
\begin{eqnarray*}
\left| \epsilon_{\hu^2,k}^{s}(\pi) \right| & \leq C \sup_{s \in [T_k,T_{k+1}]}||u(s,\cdot)||_{6,\infty} \Delta^3_{T_{k+1}}\\
 \left| \epsilon_{\hv^2,k}^{s} (\pi)\right| & \leq C \sup_{s \in [T_k,T_{k+1}]}||u(s,\cdot)||_{5,\infty} \Delta^2_{T_{k+1}}.
\end{eqnarray*}
where $\epsilon_{\hu^2,k}^{s}(\pi), \epsilon_{\hv^2,k}^{s}(\pi)$ are defined in \eqref{Eq:Erroru21} and \eqref{Eq:Errorv21}.
\end{claim}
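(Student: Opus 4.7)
The plan is to use the PDE \eqref{Eq:NonLinearPDE} satisfied by $u$ (equivalently, the FBSDE relation $\mc{V}_{(0)} u + f = 0$) to reduce each scheme error to a classical quadrature-type error, and then to apply the Itô-Taylor machinery of Lemma \ref{Theo:ItoTaylorExpansion} to bound the remaining terms.

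For the first estimate, Itô's formula applied to $u(\cdot, X_\cdot^{T_k,\hX^{\pi}_{T_k},\mu})$ combined with the PDE yields
\[
u(T_k, \hX^{\pi}_{T_k}) = \E\big[u(T_{k+1}, X_{T_{k+1}}^{T_k,\hX^{\pi}_{T_k},\mu})\big] + \int_{T_k}^{T_{k+1}} H(s)\, ds,
\qquad H(s) := \E\big[f\big(\Theta_s^{\mu}(X_s^{T_k,\hX^{\pi}_{T_k},\mu})\big)\big].
\]
Hence $\epsilon^{s}_{\hu^{2},k}(\pi)$ is exactly the trapezoidal-rule quadrature error for the deterministic function $H$, and the classical Taylor-Lagrange formula gives $\epsilon^{s}_{\hu^{2},k}(\pi) = -\frac{\Delta_{T_{k+1}}^{3}}{12}\, H''(\xi)$ for some $\xi \in [T_k, T_{k+1}]$. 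Writing $H(s) = \E[g(s, X_s^{T_k,\hX^{\pi}_{T_k},\mu})]$ with $g(s,y) := f\big(s, y, u(s,y), (\mc{V}^{\mu} u(s,y))^{T}, \la \mu_s, \varphi_{f}(\cdot, u(s,\cdot))\ra\big)$, a second application of Itô yields $H''(s) = \E[\mc{V}_{(0)}^{2} g(s, X_s)]$. A chain-rule computation in which the time derivatives of $u$ are eliminated via the PDE shows that $\mc{V}_{(0)}^{2} g$ is controlled by products of (bounded) derivatives of $V_{0:d}, f, \varphi_{f}$ and of $u$ up to order $6$, which yields the first bound.

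For the second estimate, the strategy is to expand $u(T_{k+1}, X_{T_{k+1}}^{T_k,\hX^{\pi}_{T_k},\mu}) + \Delta_{T_{k+1}}\, g(T_{k+1}, X_{T_{k+1}})$ via the Itô-Taylor expansion of Lemma \ref{Theo:ItoTaylorExpansion} around $(T_k, \hX^{\pi}_{T_k})$ and to exploit the orthogonality properties of $\zeta_{k+1}$. Direct computations give $\E[\zeta_{k+1}] = 0$, $\E[\Delta B\, \zeta_{k+1}^{T}] = I_d$, $\E[I_{(0,i)}^{T_k,T_{k+1}}\, \zeta_{k+1}^{l}] = 0$ and $\E[I_{(i,0)}^{T_k,T_{k+1}}\, \zeta_{k+1}^{l}] = \delta_{il}\, \Delta_{T_{k+1}}$, while the shuffle/Itô product rule gives $\E[I_{(i,j)}^{T_k,T_{k+1}}\, \zeta_{k+1}^{l}] = \E[I_{(i,j,k)}^{T_k,T_{k+1}}\, \zeta_{k+1}^{l}] = 0$ for nonzero indices. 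The leading $O(1)$ contribution to $\E[(u + \Delta_{T_{k+1}} g)\, \zeta_{k+1}^{l}]$ reproduces $\mc{V}_{(l)} u(T_k, \hX^{\pi}_{T_k}) = v_l(T_k, \hX^{\pi}_{T_k})$, while the two $O(\Delta_{T_{k+1}})$ contributions $\mc{V}_{(l,0)} u \cdot \Delta_{T_{k+1}}$ (from the $u$-expansion) and $\Delta_{T_{k+1}}\, \mc{V}_{(l)} g$ (from the $\Delta_{T_{k+1}} g$-expansion) cancel exactly, by the PDE identity $\mc{V}_{(l,0)} u = \mc{V}_{(l)}[\mc{V}_{(0)} u] = -\mc{V}_{(l)} g$. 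The remaining Itô-Taylor terms, controlled via Lemma \ref{lemmathatgivestheorderoftheiterateditointegral} and Cauchy-Schwarz, are bounded by $C \sup_{s \in [T_k,T_{k+1}]} \|u(s,\cdot)\|_{5,\infty}\, \Delta_{T_{k+1}}^{2}$.

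The main obstacle is the bookkeeping for the second bound: one must verify the precise cancellation of all $O(\Delta_{T_{k+1}})$ terms and check that each remaining Itô-Taylor term, after pairing with $\zeta_{k+1}$, is at most $O(\Delta_{T_{k+1}}^{2})$. This reduces to computing or bounding $\E[I_{\beta}^{T_k,T_{k+1}}\, \zeta_{k+1}^{l}]$ for every multi-index $\beta$ with $\|\beta\| \leq 4$ and to the PDE-based commutation relating $\mc{V}_{(0)} u$ to $-f$; once these ingredients are in hand, the remaining estimates are routine applications of the lemmas in Section \ref{Sec:Preliminaire}.
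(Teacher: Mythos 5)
Your argument is correct and is essentially the paper's own proof: both rest on the PDE identification of the driver with $\mc{V}_{(0)}u$ (up to the sign convention), Itô--Taylor expansions around $(T_k,\hX^{\pi}_{T_k})$, the moment identities of $\zeta_{k+1}$ (which you verify correctly, including $\E[I^{T_k,T_{k+1}}_{(0,i)}\zeta^l_{k+1}]=0$, $\E[I^{T_k,T_{k+1}}_{(i,0)}\zeta^l_{k+1}]=\delta_{il}\Delta_{T_{k+1}}$) producing the exact cancellation $\mc{V}_{(l,0)}u=-\mc{V}_{(l)}g$ at order $\Delta_{T_{k+1}}$, and Lemma \ref{lemmathatgivestheorderoftheiterateditointegral} for the remainders. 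Your only deviation is cosmetic: for the first bound you package the paper's two matched expansions (of $u$ to order $4$ and of $\mc{V}_{(0)}u$ to order $2$) as the classical trapezoidal quadrature error $-\frac{\Delta_{T_{k+1}}^{3}}{12}H''(\xi)$ for $H(s)=\E[f(\Theta^{\mu}_s(X_s^{T_k,\hX^{\pi}_{T_k},\mu}))]$, which gives the same derivative count (at most six space derivatives of $u$ after eliminating time derivatives through the PDE, with the composition derivatives of $f$ absorbed into the constant exactly as in the paper).
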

\begin{proof}
The proof follows in the same way as the one of Claim \ref{Cl:back1-1}, by performing a Taylor expansion to one additional order. The choice of $\zeta_{k+1}$ is the one needed to match the lower order terms (recall that the $i^{\mathrm th}$ component of $\zeta_k$ is expressed as $\zeta_k^i = 4\Delta_{T_k}^{-1} I^{T_k,T_{k+1}}_{(i)} - 6\Delta_{T_k}^{-2} I^{T_k,T_{k+1}}_{(0,i)}$).

Applying Lemma \ref{Theo:ItoTaylorExpansion} with $n=4$ to $u$ and with $n=2$  to $\mc{V}_{(0)}u$ implies, after taking the expectation, that
\begin{equation*}
\EX{u(T_{k+1},  X^{T_k,y,\mu}_{T_{k+1}})}   =   u(T_{k},y) + \Delta_{T_k} \mc{V}_{(0)}u(T_{k}, y) + \frac{1}{2}\mc{V}_{(0,0)}  u(T_{k},y) \Delta _{T_{k+1}}^2 +   \sum_{\beta \in \p\mc{A}_4} \E\left( I_{\beta}^{T_k,T_{k+1}}[\mc{V}_{\beta} u(.,X^{T_k,y,\mu}_{.})] \right)
\end{equation*}
and
\begin{equation*}\label{Eq:V0ofU}
\EX{\mc{V}_{(0)}u(T_{k+1},  X^{T_k,y,\mu}_{T_{k+1}})}  =  \mc{V}_{(0)} u(T_{k},y) + \mc{V}_{(0,0)}  u(T_{k},y)\Delta _{T_{k+1}} + 
\sum_{\beta \in \p\mc{A}_2} \E\left( I_{\beta}^{T_k,T_{k+1}}[ \mc{V}_{(\beta*0)} u(.,X^{T_k,y,\mu}_{.})] \right).
\end{equation*}
Then, the estimate of Lemma \ref{lemmathatgivestheorderoftheiterateditointegral} gives
\begin{align*}
&\EX{u(T_{k+1},  X^{T_k,y,\mu}_{T_{k+1}}) -\frac{\Delta_{T_k}}{2} \left( \mc{V}_{(0)}u(T_k, y) + \mc{V}_{(0)}u(T_{k+1},  X^{T_k,y,\mu}_{T_{k+1}})\right)}  -  u(T_{k},y) \\
&\quad \quad = \sum_{\beta \in \p\mc{A}_4} \E\left( I_{\beta}^{T_k,T_{k+1}}[ \mc{V}_{(\beta)} u(.,X^{T_k,y,\mu}_{.})] \right) - \frac{\Delta_{T_k}}{2}\sum_{\beta \in \p\mc{A}_2} \E\left( I_{\beta}^{T_k,T_{k+1}}[ \mc{V}_{(\beta*0)} u(.,X^{T_k,y,\mu}_{.})] \right)\\
& \quad \quad \leq C(T,V_{0:d},f)  \Delta_{T_{k+1}}^{3} \sup_{ s\in[T_k,T_{k+1}]} || u(s,.) ||_{6,\infty} ,
\end{align*}
from where we deduce the first inequality. 

Similarly, by using Lemma \ref{Theo:ItoTaylorExpansion} with $n=3$ on $u$ and with $n=1$ on $\mc{V}_{(0)}u$ and taking the expectation, using the fact that $\zeta_k^i = 4\Delta_{T_k}^{-1} I^{T_k,T_{k+1}}_{(i)} - 6\Delta_{T_k}^{-2} I^{T_k,T_{k+1}}_{(0,i)}$, it follows
\begin{equation*}
\begin{split}
& \EX{\left[  u(T_{k+1},  X^{T_k,y,\mu}_{T_{k+1}})   - \Delta_{ T_{k+1}} \mc{V}_{(0)} u(T_{k+1},  X^{T_k,y,\mu}_{T_{k+1}}) \right] \Delta_{T_{k+1}}\zeta_k^j }\\
& =\EE \Bigg[\big[ \mc{V}_{(j)} u(T_k,y) I_{(j)}^{T_k,T_{k+1}} + \mc{V}_{(0,j)} u(T_k,y) I_{(0,j)}^{T_k,T_{k+1}} + \mc{V}_{(j,0)} u(T_k,y)I_{(j,0)}^{T_k,T_{k+1}} \\
& \qquad\quad  - \Delta_{T_{k+1}}\mc{V}_{(j,0)} u(T_k,y) I_{(j)}^{T_k,T_{k+1}} \big]\left(4 I_{(j)}^{T_k,T_{k+1}}-6 \frac{I_{(0,j)}^{T_k,T_{k+1}}}{\Delta_{T_{k+1}}}\right)\Bigg] + \mc{R}(k,j)\\
& = \Delta_{T_{k+1}} \mc{V}_{(j)}u(T_{k},y)  + \mc{R}(k,j)\\
\end{split}
\end{equation*}
where
\begin{equation*}
\begin{split}
\mc{R}(k,j) & =  \Delta_{T_{k+1}}  \sum_{\beta \in \p\mc{A}_3} \E\left( I_{\beta}^{T_k,T_{k+1}}[ \mc{V}_{(\beta)} u(.,X^{T_k,y,\mu}_{.})]\zeta^j_k \right) \\
&\quad - \Delta_{T_{k+1}}^2 \sum_{\beta \in \p\mc{A}_1} \E\left( I_{\beta}^{T_k,T_{k+1}}[ \mc{V}_{(\beta*0)} u(.,X^{T_k,y,\mu}_{.})]\zeta^j_k \right).
\end{split}
\end{equation*}
Using Lemma \ref{lemmathatgivestheorderoftheiterateditointegral} we bound the residual term $\mc{R}(k,j)$ and obtain
\begin{align*}
& \Delta_{T_{k+1}}  \sum_{\beta \in \p\mc{A}_3} \E\left( I_{\beta}^{T_k,T_{k+1}}[ \mc{V}_{(\beta)} u(.,X^{T_k,y,\mu}_{.})]\zeta^j_k \right) - \Delta_{T_{k+1}}^2 \sum_{\beta \in \p\mc{A}_1} \E\left( I_{\beta}^{T_k,T_{k+1}}[ \mc{V}_{(\beta*0)} u(.,X^{T_k,y,\mu}_{.})]\zeta^j_k \right) \\
& \quad \leq   \Delta_{T_k}^{3}\sup_{s\in[T_k,T_{k+1}]} || u(s,.) ||_{5,\infty} + \Delta_{T_k}^{7/2}\sup_{ s\in[T_k,T_{k+1}]} ||u(s,.) ||_{5,\infty},
\end{align*}
recalling that $ v(T_{k},y)= \mc{V}_{(j)}u(T_{k},y) $  we deduce the second inequality.
\end{proof}

\begin{claim}\label{Cl:back2-2}
There exist two constants $C$, depending on $d$, $q$, $T$, $m$, the regularity of $V_{0:d}$ and $\varphi_{0:d}$ (and not on $k$), and $C'$, depending in addition on the regularity of $f$, such that:
\begin{align*}
 \left| \epsilon_{\hu^2,k}^{c}(\pi) \right| & \leq C \bigg( ||u(T_{k+1},\cdot)||_{2,\infty} \left[ \Delta_{T_{k+1}}^{q+1}+ \Delta_{T_{k+1}} N^{- [ (m-1) \wedge 2q]/2} \right] + ||u(T_{k+1},\cdot)||_{m+1,\infty} \Delta_{T_{k+1}}^{(m+1)/2}\\
&\qquad \quad + ||u(T_{k+1},\cdot)||_{m+2,\infty} \Delta_{T_{k+1}}^{(m+2)/2}\bigg)\\
&\quad  + C' \bigg( M_u(3,T_{k+1})\left[ \Delta_{T_{k+1}}^{q+2}+ \Delta_{T_{k+1}}^2 N^{- [ (m-1) \wedge 2q]/2} \right] + M_u(m+2,T_{k+1}) \Delta_{T_{k+1}}^{(m+3)/2} \\
&\qquad \quad + M_u(m+3,T_{k+1}) \Delta_{T_{k+1}}^{(m+4)/2}\bigg), \\
\left|\epsilon_{\hv^2,k}^{c} (\pi)\right| & \leq C \bigg( ||u(T_{k+1},\cdot)||_{3,\infty} \left[ \Delta_{T_{k+1}}^{q}+ \Delta_{T_{k+1}} N^{- [ (m-1) \wedge 2q]/2} \right] + \sum_{i=m-2}^{m+1} ||u(T_{k+1},\cdot)||_{i,\infty} \Delta_{T_{k+1}}^{(i-1)/2}\bigg)\\
& \qquad + C' \bigg(  M_u(4,T_{k+1}) \left[ \Delta_{T_{k+1}}^{q+1}+ \Delta_{T_{k+1}}^2 N^{- [ (m-1) \wedge 2q]/2} \right] + \sum_{i=m-1}^{m+2} M_u(i,T_{k+1}) \Delta_{T_{k+1}}^{i/2}\bigg).
\end{align*}
where $\epsilon_{\hu^2,k}^{c}(\pi), \epsilon_{\hv^2,k}^{c}(\pi)$ are defined in \eqref{Eq:Erroru22} and \eqref{Eq:Errorv22}.
\end{claim}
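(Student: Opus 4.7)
The plan is to split each of the two quantities into the pure $u$-cubature error and the $f$-cubature error, and then invoke the one-step cubature estimate Lemma~\ref{Lem:OneStepAll} on each piece (using its first variant for bare expectations and its second variant for Brownian-weighted expectations). The work lies in controlling the derivatives of the composite integrands by the quantity $M_u$ defined in \eqref{Eq:DefinitionM}, paying careful attention to how the appearance of $v=\mc{V}u$ inside $\Theta^\mu_{k+1}$ shifts the index of $M_u$ by one compared with Claim~\ref{Cl:back1-2}.

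For $\epsilon_{\hu^2,k}^{c}(\pi)$ in \eqref{Eq:Erroru22}, I would write it as $I_1+\tfrac{1}{2}\Delta_{T_{k+1}} I_2$, with
\[
I_1=\EE[u(T_{k+1},X^{T_k,\hX^\pi_{T_k},\mu}_{T_{k+1}})]-\ApproxEx[T_k,T_{k+1}]{u(T_{k+1},X^{T_k,\hX^\pi_{T_k},\hmu}_{T_{k+1}})},
\]
and $I_2$ the analogous cubature error applied to $G(z):=f(\Theta_{k+1}^{\mu}(z))$. Applying \eqref{Eq:Lem:OneStepAll1} of Lemma~\ref{Lem:OneStepAll} to $I_1$ directly produces the three $\|u(T_{k+1},\cdot)\|_{\,\cdot\,,\infty}$ contributions announced in the claim. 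For $I_2$, since $\Theta_{k+1}^\mu(z)$ contains $v(T_{k+1},z)=\nabla u(T_{k+1},z)\cdot V$, chain-rule differentiation of order $r$ in $z$ produces products involving at most $(r+1)$-th order derivatives of $u(T_{k+1},\cdot)$, hence $\|G\|_{r,\infty}\leq C' M_u(r+1,T_{k+1})$. Inserting this into \eqref{Eq:Lem:OneStepAll1} and multiplying by the prefactor $\Delta_{T_{k+1}}/2$ yields exactly the $M_u(3,\cdot)$, $M_u(m+2,\cdot)$, $M_u(m+3,\cdot)$ terms with the correct one-extra power of $\Delta_{T_{k+1}}$.

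For $\epsilon_{\hv^2,k}^{c}(\pi)$ in \eqref{Eq:Errorv22}, I perform the same splitting but now each cubature difference is weighted by $\Delta_{T_{k+1}}\zeta_{k+1}$ (resp.\ $\zeta_{k+1}$), which by its definition in step~\ref{Eq:DefHZeta} is a linear combination of the iterated integrals $I^{T_k,T_{k+1}}_{(i)}/\Delta_{T_{k+1}}$ and $I^{T_k,T_{k+1}}_{(0,i)}/\Delta_{T_{k+1}}^2$. Combining the Itô--Taylor expansion underlying \eqref{Eq:Lem:OneStepAll2} with Lemma~\ref{lemmathatgivestheorderoftheiterateditointegral} (applied to products against these iterated integrals) gives a $\zeta$-weighted one-step cubature bound whose $\Delta_{T_{k+1}}$ exponents are shifted by one-half relative to \eqref{Eq:Lem:OneStepAll2} and whose derivative orders are shifted by one, producing the four-term ranges $i=m-2,\dots,m+1$ (for $\|u\|_{i,\infty}$) and $i=m-1,\dots,m+2$ (for $M_u(i,\cdot)$). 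Applying this bound to $u(T_{k+1},\cdot)$ gives the first line of the estimate, while applying it to $G=f\circ\Theta^\mu_{k+1}$ together with $\|G\|_{r,\infty}\leq C'M_u(r+1,T_{k+1})$ and the prefactor $\Delta_{T_{k+1}}$ gives the second.

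The main technical obstacle is not the bookkeeping of orders but rather verifying the $\zeta_{k+1}$-weighted one-step cubature estimate: the derivation has to exploit the specific linear combination of $\Delta B_{T_{k+1}}/\Delta_{T_{k+1}}$ and $I_{(0,\cdot)}/\Delta_{T_{k+1}}^2$ defining $\zeta_{k+1}$, since a crude upper bound $|\zeta_{k+1}|\lesssim \Delta_{T_{k+1}}^{-1/2}$ would destroy the gain that makes the second-order scheme work. Once that weighted variant is set up (essentially a rerun of the proof of \eqref{Eq:Lem:OneStepAll2} with two extra $\beta$ expansions to account for $I_{(0,\cdot)}$), the whole claim follows by assembling the pieces.
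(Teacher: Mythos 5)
Your proposal is correct and follows essentially the same route as the paper: split each quantity into the $u$-part and the $f(\Theta^{\mu}_{k+1})$-part, bound the $r$-th derivative of the composite map by $M_u(r+1,T_{k+1})$ because of the extra $\mc{V}u$ dependence, and apply the one-step estimates of Lemma~\ref{Lem:OneStepAll}, keeping track of the prefactors $\Delta_{T_{k+1}}/2$ and $\Delta_{T_{k+1}}$. The only remark is that the ``$\zeta_{k+1}$-weighted'' estimate you flag as the main obstacle requires no new derivation: since $\zeta_{k+1}^i = 4\Delta_{T_{k+1}}^{-1} I^{T_k,T_{k+1}}_{(i)} - 6\Delta_{T_{k+1}}^{-2} I^{T_k,T_{k+1}}_{(0,i)}$, it is obtained by linearly combining \eqref{Eq:Lem:OneStepAll2} and \eqref{Eq:Lem:OneStepAll3} (the latter being exactly the $I_{(0,i)}$-weighted bound already provided in Lemma~\ref{Lem:OneStepAll}), which is precisely what the paper does.
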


\begin{remarque}
Although the rates of convergence have a leading term of order $\Delta_{T_{k+1}}^{(m-1)}$ that is worst than the one in the first order scheme result, (Claim \ref{Cl:back1-2} ), here we assume that $m$ is bigger, and thus they are suitable for a second order scheme.
\end{remarque}

\begin{proof}
Note first that the $r^{{\rm th}}$ derivative of the function $y\mapsto f(\cdot,y,u(\cdot,y),\mc{V}u(\cdot,y),\cdot)$ is bounded by $M_u(r+1,\cdot)$ defined by \eqref{Eq:DefinitionM}. This estimate goes up to $r+1$ and not just $r$ as in Claim \ref{Cl:back1-2}, because the differentiation of $y \in \R^d \mapsto f(\Theta_{k+1}^{\mu}(y))$ involves the additional dependence on $\mc{V}u$.

Then, the first assertion follows from applying \eqref{Eq:Lem:OneStepAll1} in Lemma \ref{Lem:OneStepAll} to $u$ and $f(\Theta_{k+1}^{\mu})$. For the second assertion, recall that the $i^{\mathrm th}$ component of $\zeta_k$ is expressed as $\zeta_k^i = 4\Delta_{T_k}^{-1} I^{T_k,T_{k+1}}_{(i)} - 6\Delta_{T_k}^{-2} I^{T_k,T_{k+1}}_{(0,i)} $. Then, applying \eqref{Eq:Lem:OneStepAll2} and \eqref{Eq:Lem:OneStepAll3} with $n=m$ in Lemma \ref{Lem:OneStepAll} to $u$ and $f(\Theta_{k+1}^{\mu})$, we conclude on the second assertion. 
\end{proof}

It will be handy to have an expansion on the prediction error, by recalling that $\tilde{u}$ is essentially an application of the first order scheme, it follows that
\begin{align} 
 (u- \tu)(T_{k},\hX^{\pi}_{T_k}) \ =\ &  \E_{\Q_{T_k,T_{k+1}}} \bigg[ (u - \hu^2) (T_{k+1},X^{T_k,\hX^{\pi}_{T_k},\hmu}_{T_{k+1}}) + \Delta_{T_{k+1}} \left( f(\Theta^{\hmu}_{k+1}(X^{T_k,\hX^{\pi}_{T_k},\hmu}_{T_{k+1}}) -  f(\ApproxTwoVarT{\Theta}_{k+1})\right) \bigg] \label{Eq:utilde_pro_error}\\
&  \quad + \epsilon^{s}_{\tilde{u}, k}(\pi)+\epsilon^{c}_{\tilde{u}, k}(\pi)  \notag
\end{align}
where
\begin{align}
&  \epsilon^{c}_{\tilde{u}, k}(\pi)  := \E \left[ u(T_{k+1},X^{T_k,\hX^{\pi}_{T_k},\mu}_{T_{k+1}}) + \Delta_{T_{k+1}} f(\Theta^{\mu}_{k+1}(X^{T_k,\hX^{\pi}_{T_k},\hmu}_{T_{k+1}}) \right] \notag\\
&  \quad - \E_{\Q_{T_k,T_{k+1}}}\left[u(T_{k+1},X^{T_k,\hX^{\pi}_{T_k},\hmu}_{T_{k+1}})+ \Delta_{T_{k+1}} f(\Theta^{\hmu}_{k+1}(X^{T_k,\hX^{\pi}_{T_k},\hmu}_{T_{k+1}}) \right]\label{Eq:utilde_cub_error}\\
&  \epsilon^{s}_{\tilde{u}, k}(\pi) := u(T_k,\hX^{\pi}_{T_k}) - \EX{u(T_{k+1},X^{T_k,\hX^{\pi}_{T_k},\mu}_{T_{k+1}}) + \Delta_{T_{k+1}} f[\Theta^{\mu}_{k+1}(X^{T_k,\hX^{\pi}_{T_k},\hmu}_{T_{k+1}})]}\label{Eq:utilde_scheme_error}.
\end{align}
Note that $\epsilon^{s}_{\tilde{u}, k}(\pi) $ and $\epsilon^{c}_{\tilde{u}, k}(\pi) $ may be bounded respectively as in Claims \eqref{Cl:back1-1} and \eqref{Cl:back1-2} . The fact that we are using here $\Theta^{\mu}_{k+1}$ instead of $\bar{\Theta}^{\mu,\mu}_{k+1,k}$ in those claims, is not really problematic. For the scheme error in Claim \eqref{Cl:back1-1}, the difference can be controlled by an additional application of Ito's theorem, but we skip the details. For the cubature error in Claim \eqref{Cl:back1-2} this difference plays no role at all.

Let us now focus on the errors when approaching the driver. Using the mean value theorem, we know that there exist $\Psi_1, \Psi_2, \Psi_3$ respectively bounded by $\| \partial_{y'}  f \|_{\infty}$, $\| \partial_{z}  f \|_{\infty}$  and $\| \partial_{w}  f \|_{\infty} $, such that
\begin{align} 
 f(\Theta_{k}^{\mu}(\hX^{\pi}_{T_k})) -   f(\ApproxTwoVarT{\Theta}_{k})\ =\ &  \Psi_{1} \big[ u(T_k,\hX_{T_k}^\pi)- \hu^2(T_{k},\hX^{\pi}_{T_k}) \big] +  \Psi_{2} \big[ v(T_k,\hX_{T_k}^\pi) - \hv^2(T_{k},\hX^{\pi}_{T_k}) \big] \label{Eq:ExpF}\\
 & + \Psi_3 \left(\la \hmu_{T_{k}}, \varphi_f[\cdot,u(T_{k},\cdot)]\ra - \la \hmu_{T_{k}}, \varphi_f[\cdot,\hu^2(T_{k},\cdot),\cdot)]\ra\right)\label{Eq:ExpF2} \\
 & + \Psi_3 \left(\la \mu_{T_k}, \varphi_f[\cdot,u(T_{k},\cdot)]\ra - \la \hmu_{T_{k}}, \varphi_f[\cdot,u(T_{k},\cdot),\cdot)]\ra\right),\label{Eq:ExpF3}
\end{align}
Similarly, there exist random variables  and $\Psi_1^{\prime}, \Psi_2^{\prime}, \Psi_3^{\prime}$ respectively bounded by $\| \partial_{y'}  f \|_{\infty}$, $\| \partial_{z}  f \|_{\infty}$  and $\| \partial_{w}  f \|_{\infty} $, such that
\begin{align} 
 f(\Theta_{k}^{\mu}(\hX^{\pi}_{T_k})) -   f(\PredictorVarT{\Theta}^{\pi}_{k})\  =\ &  \Psi^{\prime}_{1} \big[ u(T_k,\hX_{T_k}^\pi)- \tu(T_{k},\hX^{\pi}_{T_k}) \big] +  \Psi^{\prime}_{2} \big[ v(T_k,\hX_{T_k}^\pi) - \hv^2(T_{k},\hX^{\pi}_{T_k}) \big] \label{Eq:ExpFpred}\\
 & + \Psi_3^{\prime} \left(\la \hmu_{T_{k}}, \varphi_f[\cdot,u(T_{k},\cdot)]\ra - \la \hmu_{T_{k}}, \varphi_f[\cdot,\tu(T_{k},\cdot),\cdot)]\ra\right)\label{Eq:ExpFpred2} \\
 & + \Psi_3^{\prime} \left(\la \mu_{T_k}, \varphi_f[\cdot,u(T_{k},\cdot)]\ra - \la \hmu_{T_{k}}, \varphi_f[\cdot,u(T_{k},\cdot),\cdot)]\ra\right) .\label{Eq:ExpFpred3}.
\end{align}

Then, using the error development for $(u- \hu^2)(T_{k},\hX^{\pi}_{T_k})$ given in \eqref{Eq:Erroru21},\eqref{Eq:Erroru22}, \eqref{Eq:Erroru23},\eqref{Eq:Erroru24} the error and the ones for $f(\Theta_{k}^{\mu}(\hX^{\pi}_{T_k})) -   f(\ApproxTwoVarT{\Theta}_{k})$ in \eqref{Eq:ExpF},\eqref{Eq:ExpF2} and \eqref{Eq:ExpF3} and $ f(\Theta_{k}^{\mu}(\hX^{\pi}_{T_k})) -   f(\PredictorVarT{\Theta}^{\pi}_{k})$ in \eqref{Eq:ExpFpred}, \eqref{Eq:ExpFpred2} and \eqref{Eq:ExpFpred3} ,  it follows

\begin{align*}
(u- \hu^{2})(T_k,\hX^{\pi}_{T_k}) & + \frac{1}{2} \Delta_{T_{k}} \left( f(\Theta_{k}^{\mu}(\hX^{\pi}_{T_k})) -   f(\ApproxTwoVarT{\Theta}_{k})  \right) \notag \\
& =  (1+ \Delta_{T_{k}}\Psi_{1}) \E_{\Q_{T_k,T_{k+1}}} \bigg[ u(T_{k+1},X^{T_k,\hX^{\pi}_{T_k},\hmu}_{T_{k+1}}) - \hu^2(T_{k+1},X^{T_k,\hX^{\pi}_{T_k},\hmu}_{T_{k+1}}) \notag\\
& \qquad\qquad\qquad\qquad\qquad  + \frac{1}{2}\Delta_{T_{k+1}} \left( f(\Theta_{k+1}^{\mu}(X_{T_{k+1}}^{T_k,\hX^{\pi}_{T_k},\mu}))-  f(\ApproxTwoVarT{\Theta}_{k+1}) \right) \bigg] \\
& \quad + (1+ \Delta_{T_{k}}\Psi_{1}) (\epsilon_{\hu^2,k}^s(\pi) + \epsilon_{\hu^2,k}^c)(\pi)\\
& \quad +   \frac{1}{2}[\Psi_{2}\Delta_{T_{k}} +\Psi_{2}^{\prime}\Delta_{T_{k+1}}] \big[ v(T_k,\hX_{T_k}^\pi) - \hv^2(T_{k},\hX^{\pi}_{T_k}) \big] \notag\\
& \quad + \Delta_{T_{k+1}}\frac{\Psi^{\prime}_{1}}{2} \big[ u(T_k,\hX_{T_k}^\pi)- \tu(T_{k},\hX^{\pi}_{T_k}) \big] \notag\\
& \quad + \frac{1}{2}[\Psi_{3}\Delta_{T_{k}} +\Psi_{3}^{\prime}\Delta_{T_{k+1}}] \left(\la \mu_{T_k}, \varphi_f[\cdot,u(T_{k},\cdot)]\ra - \la \hmu_{T_{k}}, \varphi_f[\cdot,u(T_{k},\cdot),\cdot)]\ra\right),\notag\\ 
 & \quad  + \Delta_{T_{k}} \frac{\Psi_{3}}{2} \left(\la \hmu_{T_{k}}, \varphi_f[\cdot,u(T_{k},\cdot)]\ra - \la \hmu_{T_{k}}, \varphi_f[\cdot,\hu^2(T_{k},\cdot),\cdot)]\ra\right)\notag\\
 & \quad + \Delta_{T_{k+1}}\frac{\Psi^{\prime}_{3}}{2} \left(\la \hmu_{T_{k}}, \varphi_f[\cdot,u(T_{k},\cdot)]\ra - \la \hmu_{T_{k}}, \varphi_f[\cdot,\tu(T_{k},\cdot),\cdot)]\ra\right) \notag
\end{align*}
Then, replacing the expansion for the error of $\hv^2$ in terms of \eqref{Eq:Errorv21},\eqref{Eq:Errorv22}, \eqref{Eq:Errorv23} and  the one for $(u-\tilde{u})$ in terms of \eqref{Eq:utilde_pro_error}, \eqref{Eq:utilde_cub_error}, \eqref{Eq:utilde_scheme_error},  and up to rescaling some of the $\Psi$ random variables, one obtains

\begin{align}
(u- \hu^{2})& (T_k,\hX^{\pi}_{T_k})  + \frac{1}{2} \Delta_{T_{k}} \left( f(\Theta_{k}^{\mu}(\hX^{\pi}_{T_k})) -   f(\ApproxTwoVarT{\Theta}_{k})  \right) \label{Eq:FinalExpUhat2} \\
& =   \E_{\Q_{T_k,T_{k+1}}} \Bigg((1+ \Delta_{T_{k+1}}\Psi_{1}^{\prime\prime} + \Delta_{T_{k}} \zeta_{k+1} \Psi_{2}^{\prime\prime}) \bigg[ (u - \hu^2)(T_{k+1},X^{T_k,\hX^{\pi}_{T_k},\hmu}_{T_{k+1}}) \notag\\
& \qquad\qquad\qquad\qquad\qquad  + \frac{1}{2}\Delta_{T_{k+1}} \left( f(\Theta_{k+1}^{\mu}(X_{T_{k+1}}^{T_k,\hX^{\pi}_{T_k},\mu}))-  f(\ApproxTwoVarT{\Theta}_{k+1}) \right) \bigg] \Bigg) \notag\\
& \quad -  \E_{\Q_{T_k,T_{k+1}}}\Bigg( \bigg[ (u - \hu^2)(T_{k+1},X^{T_k,\hX^{\pi}_{T_k},\hmu}_{T_{k+1}}) \bigg](\Delta_{T_{k+1}}\Psi_{1}^{\prime\prime} + \Delta_{T_{k+1}}\zeta_{k+1}\Psi_{2}^{\prime\prime}) \Bigg)\notag\\
& \quad + (1+ \Delta_{T_{k}}\Psi_{1}) (\epsilon_{\hu^2,k}^s(\pi) + \epsilon_{\hu^2,k}^c(\pi)) + \Delta_{T_{k+1}}\Psi^{\prime\prime}_{3} (\epsilon^{s}_{\tilde{u}, k}(\pi)+\epsilon^{c}_{\tilde{u}, k}(\pi)) \notag\\
& \quad +  \Delta_{T_{k+1}}\Psi^{\prime\prime}_{4} (\epsilon^{s}_{\hv^2, k}(\pi)+\epsilon^{c}_{\hv^2, k}(\pi))\notag\\
& \quad + \frac{1}{2}[\Psi_{3}\Delta_{T_{k}} +\Psi_{3}^{\prime}\Delta_{T_{k+1}}] \left(\la \mu_{T_k}, \varphi_f[\cdot,u(T_{k},\cdot)]\ra - \la \hmu_{T_{k}}, \varphi_f[\cdot,u(T_{k},\cdot),\cdot)]\ra\right),\notag\\ 
 & \quad  + \Delta_{T_{k}} \frac{\Psi_{3}}{2} \left(\la \hmu_{T_{k}}, \varphi_f[\cdot,u(T_{k},\cdot)]\ra - \la \hmu_{T_{k}}, \varphi_f[\cdot,\hu^2(T_{k},\cdot),\cdot)]\ra\right)\notag\\
 & \quad + \Delta_{T_{k+1}}\frac{\Psi^{\prime}_{3}}{2} \left(\la \hmu_{T_{k}}, \varphi_f[\cdot,u(T_{k},\cdot)]\ra - \la \hmu_{T_{k}}, \varphi_f[\cdot,\tu(T_{k},\cdot),\cdot)]\ra\right). \notag
\end{align}

We are now in position to give the dynamics of the sum of the maximal errors $\mc{E}_u^2(k) + \Delta_{T_{k}}\mE_{f}(k)$. We use Cauchy-Schwartz inequality on the first two terms of \eqref{Eq:FinalExpUhat2}, bound the last two terms in \eqref{Eq:FinalExpUhat2} using the Lipschitz property of $\varphi_f$, as in the first order error analysis, we can deduce  that for some constants $C, C'$,
\begin{equation}
\mE_u^2(k) + \Delta_{T_{k+1}} \mE_f(k)  \leq  (1+C\Delta_{T_{k+1}}) \left[\mc{E}^2_u(k+1) + \Delta_{T_k}\mc{E}_f (k+1)\right] +C' ( \bar{\epsilon}_2(k) + \Delta_{T_{k+1}}N^{-[(m-1)\wedge q]/2}) \label{Eq:propa2}
\end{equation}
with 
\begin{equation}
\label{Eq:DefEpsBar2}
\bar{\epsilon}_2(k) = \sup_{\pi\in \mc{S}_k}|\epsilon_{\hu^2,k}^s(\pi) + \Delta_{T_{k+1}}\epsilon_{\hv^2,k}^s(\pi)| + \sup_{\pi \in \mc{S}_k}| \epsilon_{\hu^2,k}^c(\pi)+ \Delta_{T_{k+1}}\epsilon_{\hv^2,k}^c(\pi)| + \Delta_{T_{k+1}}(\sup_{\pi \in \mc{S}_k}|\epsilon_{\tu,k}^s(\pi) + \epsilon_{\tu,k}^c(\pi)| ).
\end{equation}
We can bound the two first terms of $\bar{\epsilon}_2(k)$ using respectively Claims \ref{Cl:back2-1}, \ref{Cl:back2-2}, while the last term may be bounded using Claims \ref{Cl:back1-1} and \ref{Cl:back1-2} as we have discussed before, so that there is a constant $C''$

\begin{align}
\bar{\epsilon}_2(k) \leq & C'' \bigg( \sup_{s\in [T_k,T_{k+1}]}||u(s,\cdot)||_{4,\infty} \Delta^3_{T_{k+1}}  \label{Inq:eps2k}\\
 &\quad +  [||u(T_{k+1},\cdot)||_{3,\infty} + M_u(4,T_{k+1}) ]\left[ \Delta_{T_{k+1}}^{q+1}+ \Delta_{T_{k+1}}^2 N^{- [ (m-1) \wedge 2q]/2} \right] \notag\\
 &\quad + \sum_{i=m-2}^{m+1} [||u(T_{k+1},\cdot)||_{i,\infty} + M_u(i,T_{k+1})  )]\Delta_{T_{k+1}}^{(i+1)/2} + \sum_{i=m+2}^{m+3} M_u(i,T_{k+1}) \Delta_{T_{k+1}}^{(i+1)/2}\bigg).\notag
\end{align}

Given that the initialization step is the first order scheme, Claims \ref{Cl:back1-1}, \ref{Cl:back1-2} and \ref{Cl:back1-3} imply $\mE_u^2(N-1)\leq KN^{-2}$ and $\Delta_{T_{N-1}}\mE_f^2(N-1)\leq KN^{-2}$. 
Moreover, under \SB, we have from Lemma \ref{RegNonLinearPDE} that for all $n \in \mathbb{N}^*$ there exists a constant $K$, depending on the regularity of $V_{0:d}$ and $\phi$, such that for all $k \in \{0,\cdots,N-1\}$, 
\begin{equation*}
M_u(n,T_{k+1}) + \sup_{s\in [T_k,T_{k+1}]}||u(s,\cdot)||_{n,\infty}\leq K.
\end{equation*}
An application of the discrete Gronwall lemma on the sum $ \mE_u^2(k) + \Delta_{T_{k+1}} \mE_f(k)$, gives
\[\sup_{k\leq N-1} \mE_u^2(k) + \Delta_{T_{k}} \mE_f(k) \leq C\left( \mE_u^2(N-1) + \Delta_{T_{N}} \mE_f(N-1) +N^{-[(m-1)\wedge q]/2} +\sum_{i=0}^{N-2} \bar{\epsilon}_2(k) \right). \]
Using  \eqref{Inq:eps2k} we deduce that, if  $m\geq 7$,where the bound is a consequence of the second assertion of Claim \ref{Cl:back2-2},  we have
\[\sup_{k\leq N-1} \mE_u^2(k) + \Delta_{T_{k}} \mE_f(k) \leq CN^{-2}. \]
As for the first order case, the previous result together with the expansion of $v-\hv^2$ given  \eqref{Eq:Errorv21}, \eqref{Eq:Errorv22}, \eqref{Eq:Errorv23} and Cauchy-Schwartz inequality, implies
\[ \Delta_{T_{k}}^{1/2} \mE_v^2(k) \leq CN^{-2}.\]
This concludes the proof of assertions \eqref{Eq:rate1} and \eqref{Eq:rate3} in Theorem \ref{MR}.
\qed

\section{Mathematical tools}\label{Sec:MathTools}

Here we will intensively use the notions defined in section \ref{Sec:Preliminaire}.

\subsection{The conditional linear PDE}\label{ProofPDE}
\begin{lemme}\label{PropPDE}
Let $\eta$ be a given family of probability measures on $\R^d$ and consider the PDE
\begin{equation}\label{PDE}
\left\lbrace
\begin{array}{ll}
\p_t \psi(t,y) + \mL^\eta \psi(t,y)=0,\text{ on } [0,T]\times \R^d\\
\psi(T,y)=\phi(y)
\end{array}
\right.
\end{equation}
where $\mL^{\eta}$ is defined by \eqref{Eq:CondimL}. Suppose that assumption \textbf{(SB)} holds, then, this PDE admits a unique infinitely differentiable solution $\psi$ and for every multi-index $\beta\in \mc{M}$ there exists a positive constant $C$ depending on the regularity of $V_{0:d}$, $\varphi_{0:d}$ and $T$ such that, :
\begin{equation}\label{Eq:Bound_derivatives_linear_u}
||D_{\beta} \psi(t,.)||_\infty  \leq C ||\phi||_{||\beta||,\infty}
\end{equation}
\end{lemme}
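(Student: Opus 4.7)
The plan is to represent $\psi$ by Feynman--Kac, setting
\begin{equation*}
\psi(t,y) := \E\bigl[\phi(X_T^{t,y,\eta})\bigr],
\end{equation*}
with $X^{t,y,\eta}$ defined by \eqref{Eq:ArtifDyna1}. Existence of this process is immediate because $V_i(s,\cdot,\langle\eta_s,\varphi_i\rangle)$ is uniformly Lipschitz in the space variable, with a Lipschitz constant bounded by $\|V_i\|_{1,\infty}$ uniformly in $s$ and in $\eta$ (since $|\langle\eta_s,\varphi_i\rangle|\leq\|\varphi_i\|_{\infty}$). Uniqueness of the PDE solution in the class of smooth, bounded-derivative functions is standard: if $\tilde\psi$ is another such solution, Itô's formula applied to $s\mapsto\tilde\psi(s,X_s^{t,y,\eta})$ between $t$ and $T$ makes the drift vanish by the PDE, the stochastic integral is a martingale by the boundedness of $\nabla \tilde\psi$, and taking expectations gives $\tilde\psi(t,y)=\psi(t,y)$.

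For the bound \eqref{Eq:Bound_derivatives_linear_u} I would first treat the case of pure spatial multi-indices, that is $|\beta|_0 = 0$ so $\|\beta\| = |\beta|$. Standard stochastic flow theory ensures that $y\mapsto X_T^{t,y,\eta}$ is almost surely $C^\infty$, and that the tangent processes $J^{(k)}_s := D_y^k X_s^{t,y,\eta}$ satisfy linear SDEs driven by derivatives of the $V_i$. Crucially, the moments of $J^{(k)}$ are bounded uniformly in $(t,y)$ and---most importantly---uniformly in $\eta$, because the $\eta$-dependence enters only through the bounded quantities $\langle\eta_s,\varphi_i\rangle$. Differentiating under the expectation and applying Faà di Bruno gives
\begin{equation*}
D_\beta\psi(t,y) = \E\!\left[\sum_{r\leq|\beta|}\nabla^r\phi\bigl(X_T^{t,y,\eta}\bigr)\cdot P_r\bigl(J^{(1)}_T,\ldots,J^{(|\beta|)}_T\bigr)\right],
\end{equation*}
where each $P_r$ is a universal polynomial. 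Taking sup-norms and applying Hölder's inequality yields the desired bound with $\|\beta\|$ replaced by $|\beta|$, with a constant depending only on $T$, $d$ and $\|V_{0:d}\|_{|\beta|,\infty}$.

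For general $\beta$ with $|\beta|_0 > 0$, I use the PDE itself to trade each time derivative against two spatial derivatives. Schematically, $\partial_t\psi = -V_0\cdot D_y\psi - \tfrac12\mathrm{Tr}(VV^T D_y^2\psi)$, so iterating this identity on the mixed derivative $D_\beta\psi$ (absorbing derivatives of the smooth bounded coefficients $V_{0:d}$ into the constant) reduces the estimate to a linear combination of pure spatial derivatives of $\psi$ of order at most $(|\beta|-|\beta|_0)+2|\beta|_0 = \|\beta\|$. Combining with the pure spatial case yields the required bound.

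The main obstacle I anticipate is the combinatorial book-keeping in the Faà di Bruno expansion, together with the recursive elimination of time derivatives when $|\beta|_0$ is large, and justifying that every mixed derivative genuinely exists in a pointwise classical sense. This last verification needs some regularity in time of $s\mapsto\langle\eta_s,\varphi_i\rangle$; in the applications of the lemma, most notably $\eta=\mu$ the MKV law, continuity of the marginals $\mu_s$ in $s$ suffices to close the argument.
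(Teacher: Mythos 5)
Your proposal is correct and is, in substance, the argument the paper intends: the paper's entire proof is the one-line remark that once the family $\eta$ is frozen, \eqref{PDE} is a classical linear parabolic PDE whose coefficients are smooth in $y$ with bounds uniform in $t$ and in $\eta$ (the measure enters only through $\langle\eta_s,\varphi_i\rangle$, which is bounded by $\|\varphi_i\|_\infty$), so existence, uniqueness and the derivative bounds are standard; your Feynman--Kac representation, stochastic-flow/Fa\`a di Bruno estimates for purely spatial $\beta$, and the trade of each time derivative against two space derivatives via $\p_t\psi=-\mL^\eta\psi$ (consistent with the weight $\|\beta\|=|\beta|+|\beta|_0$) is exactly the standard way to supply those details with constants uniform in $\eta$. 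One point to tighten: for multi-indices with $|\beta|_0\geq 2$, iterating the PDE forces you to differentiate $t\mapsto\langle\eta_t,\varphi_i\rangle$, so mere continuity of the marginals is not sufficient, contrary to your closing remark; for the families actually used in the paper ($\eta=\mu$, or the cubature measures with their explicit polynomial-in-time expansions of the McKean terms) this is harmless because $\frac{d}{dt}\langle\mu_t,\varphi_i\rangle=\langle\mu_t,\mL^\mu\varphi_i\rangle$ and its iterates provide the needed time-smoothness, and this is precisely why the constant in \eqref{Eq:Bound_derivatives_linear_u} is allowed to depend on the norms of $\varphi_{0:d}$, whereas your constant in the purely spatial case depends only on $V_{0:d}$ and $T$. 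The paper's proof glosses over the same point, so this is an imprecision rather than a deviation, but it is worth stating the time-regularity hypothesis on $\eta$ (or restricting $\beta$ to $|\beta|_0\leq 1$ plus the Fokker--Planck identity) explicitly.
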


\begin{proof}
Since the law that appears in the coefficients of the SDE is fixed, this is an obvious consequence of the regularity of the coefficients and the terminal condition.\\
\end{proof}

\subsection{The conditional semi-linear PDE}

\begin{lemme}\label{RegNonLinearPDE} Under \textbf{(SB)} there exists a function $u$ from $[0,T]\times \R^d$ to $\R$ such that
$$Y_t^{y} = u(t,X_t^y)$$ 
where $Y_t^{y}$ is defined in \eqref{NLFBSDE}. This function is in $C^{1,2}([0,T] \times \R^d,\R)$ and is the unique solution of the semi-linear PDE:
\begin{equation}\label{Eq:AssociatedPDE}
\left\lbrace
\begin{array}{ll}
\p_t u(t,y)  + \mL^\mu u(t,y) =f\left(t,y,u(t,y),(\mc{V}^\mu u(t,y))^T,\la \mu_t,\varphi_f(\cdot,u(t,\cdot)\ra \right),\text{ on } [0,T]\times \R^d\\
 u(T,y)=\phi(y)
\end{array}
\right.
\end{equation}
where $\mathcal{L}^\mu$ is defined as in \eqref{defmL}. 

Moreover, $u$ is infinitely differentiable, and for every multi-index $\beta\in \mc{M}$ there exists a positive constant $C$ depending on the regularity of $V_{0:d},f,\varphi_{0:d}$, $\phi$ and $T$ such that,
\begin{equation}\label{Eq:Bound_derivatives_u}
||D_{\beta} u||_\infty  \leq C
\end{equation}
\end{lemme}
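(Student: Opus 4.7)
The plan is to exploit the conditional reformulation: once the marginal flow $\mu=(\mu_s)_{0\le s\le T}$ of the forward component of \eqref{NLFBSDE} is fixed, the system becomes a classical time-inhomogeneous decoupled FBSDE, and then classical nonlinear Feynman--Kac plus standard regularity arguments apply. First I would invoke the extension of Buckdahn et al.\ mentioned earlier to obtain existence and uniqueness of $(X^y,Y^y,Z^y)$ solving \eqref{NLFBSDE}, and denote by $\mu$ the resulting flow of marginals of $X^y$. Plugging $\mu$ into the coefficients, the term $\E\varphi_f(X_s,Y_s)=\la\mu_s,\varphi_f(\cdot,u(s,\cdot))\ra$ appearing in the driver depends on $s$ only, so the system in \eqref{cFBSDE} is a classical decoupled FBSDE with deterministic time-dependent parameters.

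Next I would construct $u$ as the standard decoupling field: set $u(t,y):=Y_t^{t,y,\mu}$, where $(X^{t,y,\mu},Y^{t,y,\mu},Z^{t,y,\mu})$ solves \eqref{cFBSDE}. Pathwise uniqueness and the strong Markov property of $X^{t,y,\mu}$ yield $Y^y_t=u(t,X^y_t)$ a.s., as announced. Because $\phi\in C_b^\infty$ under \textbf{(SB)}, because the coefficients $V_i(\cdot,\cdot,\la\mu_\cdot,\varphi_i\ra)$ and $f(\cdot,\cdot,\cdot,\cdot,\la\mu_\cdot,\varphi_f(\cdot,u(\cdot,\cdot))\ra)$ are bounded and smooth in the space/state variables with bounded derivatives (the McKean terms have been reduced to measurable functions of time), the classical nonlinear Feynman--Kac theorem of Pardoux--Peng identifies $u$ as the unique $C^{1,2}$-solution of \eqref{Eq:AssociatedPDE}, and gives the identity $Z_s^{t,y,\mu}=(\mc{V}^\mu u(s,X_s^{t,y,\mu}))^T$ used earlier in \eqref{Eq:DefV}.

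For the uniform bounds \eqref{Eq:Bound_derivatives_u}, I would proceed by induction on $\|\beta\|$ using the probabilistic representation
\[
u(t,y)=\EX{\phi(X_T^{t,y,\mu})+\int_t^T f\bigl(s,X_s^{t,y,\mu},Y_s^{t,y,\mu},Z_s^{t,y,\mu},\la\mu_s,\varphi_f(\cdot,u(s,\cdot))\ra\bigr)ds}.
\]
The crucial observation is that the last argument of $f$ is a bounded function of $s$ alone, so it is frozen when differentiating in $y$. Then differentiability of the SDE flow $y\mapsto X_s^{t,y,\mu}$ to any order (with bounded derivatives, by standard flow estimates for SDEs with smooth bounded-derivative coefficients), combined with the BSDE differentiation results of Pardoux--Peng (or, equivalently, Malliavin calculus on BSDEs), propagates smoothness from $\phi$ and $f$ to $u$, with bounds depending only on $T$, the dimension, $\|\varphi_{0:d}\|_{\cdot,\infty}$, $\|\varphi_f\|_{\cdot,\infty}$, $\|f\|_{\cdot,\infty}$ and $\|\phi\|_{\cdot,\infty}$.

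The main obstacle is of a bookkeeping nature: verifying that the law-dependent parameter $s\mapsto\la\mu_s,\varphi_f(\cdot,u(s,\cdot))\ra$ does not introduce any loss of regularity in $y$. Since this quantity is $y$-independent, it truly acts as an inhomogeneous time forcing and the standard smooth-coefficient theory carries over verbatim; the only care required is to check that the recursion for higher derivatives closes, which it does because $\varphi_f$ and $f$ have bounded derivatives of all orders and the inductive hypothesis provides the boundedness of $\|u(s,\cdot)\|_{k,\infty}$ at the previous step.
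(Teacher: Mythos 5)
Your proposal is correct and follows essentially the same route as the paper: freeze the joint-law term in the driver as a deterministic function of time, apply the Pardoux--Peng nonlinear Feynman--Kac theorem to the resulting classical conditional FBSDE to obtain the decoupling field, the $C^{1,2}$ regularity and the PDE, and get the uniform bounds \eqref{Eq:Bound_derivatives_u} by iterating the BSDE-differentiation argument (the paper invokes Crisan--Delarue for the space derivatives and recovers time derivatives through the PDE). The only cosmetic difference is the ordering: the paper first works with the auxiliary parameter $\E[\varphi_f(X_s^x,Y_s^x)]$ in the driver and only after proving $Y_t^x=u(t,X_t^x)$ rewrites it as $\la\mu_s,\varphi_f(\cdot,u(s,\cdot))\ra$, whereas you assert that identity up front; read in the paper's order, your argument is not circular and matches it step by step.
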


\textbf{Proof of Lemma \ref{RegNonLinearPDE}.}\\

(i) \emph{Existence and PDE solution.} Consider the conditional BSDE:
\begin{equation}\label{cFBSDEinter}
\left\lbrace\begin{array}{ll}
dX_s^{t,y,x}=\sum_{i=0}^dV_i(s,X_s^{t,y,x}, \E\left[  \varphi_i(X_s^x)\right]) dB^i_s\\
d\bY_{s}^{t,y,x}  = -f(s,X_s^{t,y,x},\bY_s^{t,y,x},\bZ_s^{t,y,x},\E\left[ \varphi_f(X_s^x,Y_s^x)\right])ds + \bZ_s^{t,y,x} dB_s^{1:d}\\
X_t^{t,y,x} = y,\quad \bY_{T}^{t,y,x}  = \phi(X^{t,y,x}_T),
\end{array}
\right.
\end{equation}
for $s$ in $[t,T]$. Note that the McKean term in \eqref{cFBSDEinter}, $\E \varphi_f(X_s^x,Y_s^x)$, does not depend on $y$. In fact, if the solution of \eqref{NLFBSDE} is found, one can consider the term $\E \varphi_f(X_s^x,Y_s^x)$ simply as a term depending on time, so that conditionally to knowing the joint law of $(X_t^x,Y_t^x,\ 0\leq t \leq T)$, equation \eqref{cFBSDEinter} is classical and Markov. As pointed out before, the existence of a unique solution to \eqref{NLFBSDE} follows the lines of the results in \cite{buckdahn_mean-field_2009}.\\

It is clear from the previous discussion that we might apply classical results on BSDE to analyze equation \eqref{cFBSDEinter}. In particular, we have from the results of Pardoux and Peng in \cite{pardoux_backward_1992} given the regularity in space of $f$ and $\phi$, that the mapping $(t,y)\mapsto \bu (t,y) = \bY^{t,y,x}_t$ the solution of \eqref{cFBSDEinter} is differentiable, once in time and twice in space with bounded derivatives and satisfies the PDE:
\begin{equation*}
\left\lbrace
\begin{array}{ll}
\p_t \bu(t,y)  + \mL^\mu \bu(t,y) =f\left(t,y,\bu(t,y), (\mc{V}^{\mu} \bu(t,y))^T,\E[\varphi_f(X^x_t,Y^x_t)]\right)\\
 \bu(T,y)=\phi(y)
\end{array}
\right.,
\end{equation*}

Finally, one can show that $\bu$ solves \eqref{Eq:AssociatedPDE}. To see this, notice that due to the uniqueness of the solutions to \eqref{NLFBSDE} and \eqref{cFBSDEinter},
\[\bu(t,X_t^x) = \bu(t,X_t^{0,x,x})= \bY_t^{t,X_t^{0,x},x} = \bY_{t}^{0,x,x} = Y_t^x.\]
This equality implies \[ \E[\varphi_f(X_s^x,\bu(s,X_s^x))] = \E[\varphi_f(X^{x}_s,Y^{x}_s)] \] for all $s$ in $[0,T]$. Therefore we set $u:=\bu$. This concludes the proof of the first assertion. \\

(ii) \emph{Control on the derivatives.} 

To prove the regularity of $u$ and the the bound on its derivatives, we consider first the case involving only space derivatives. In this case the whole  argument of  Pardoux and Peng may be iterated reasoning on the BSDE for the first derivative, as long as the hypotheses remain valid, to obtain a BSDE for higher order derivatives in space. We turn the reader to the paper of Crisan and Delarue \cite{crisan_sharp_2012} where this is done in detail (taking into account the additional law dependence that must be considered in our framework).

It remains to consider the case of general derivatives including time derivatives. As we have said before, iterative applications of the Pardoux and Peng argument lead to PDEs similar to \eqref{Eq:AssociatedPDE}. Then, we can argue that we are able to differentiate once in time for every two derivatives in space. It is also clear that the control on the space derivatives plus the regularity properties of the coefficients imply the control for time derivatives.


\subsection{One-step errors}

Let $\eta$ be a given family of probability measures and $X^{t,y,\eta}$, $\tX^{t,y,\eta}$ defined as in \eqref{Eq:ArtifDyna1} and \eqref{Eq:ArtifDyna2}. We recall that $\hmu$ denotes the discrete probability measure defined by Algorithm \ref{algo:KF} and $\mu$ the law of the forward part of \eqref{NLFBSDE}.

\begin{lemme}\label{OnestepEuler}
Let  $g$ be a $C^{2}_b$ function  from $\R^d$ to $\R$. Then, there exists a constant $C$ depending only on $V_{0:d}$ and $T$ such that for all $k=1,\cdots,N-1$:
\begin{eqnarray}
\left|(P_{T_k,T_{T_{k+1}}}^{\eta}-\tilde{P}_{T_k,T_{T_{k+1}}}^{\hmu}) g(y)\right| &\leq& C||g||_{2, \infty}\sum_{i=0}^{d} \int_{T_k}^{T_{k+1}} \left|\la \eta_t,\varphi_i\ra- \sum_{p=0}^{q-1} [(t-T_k)^p/p!]\la\eta_{T_k},(\mL^{\eta})^p\varphi_i\ra\right| dt \notag\\
&& \quad + C||g||_{2,\infty}\sum_{i=0}^{d}\sum_{p=0}^{q-1} \Delta_{T_{k+1}}^{p+1}\left|\la \eta_{T_k}, (\mL^{\eta})^p\varphi_i\ra - \la \hmu_{T_k}, (\mL^{\hmu})^{p}\varphi_i\ra \right|.\label{OnestepEuler1}
\end{eqnarray}
Moreover, if $g$ is $C^{3}_b$, there exists a constant $C$ depending only on $V_{0:d},d$ such that for all $l=1,\cdots, d$ and $k=1,\cdots,N-1$:
\begin{eqnarray}\label{OnestepEuler2}
&&\left|\E \left( \left[g\left(X_{T_{k+1}}^{T_k,y,\eta}\right)- g\left(\tX_{T_{k+1}}^{T_k,y,\hmu}\right)\right] I^{T_k,T_{k+1}}_{(l)} \right) \right|\\
&& \quad \leq C||g||_{3,\infty}\sum_{i=0}^{d} \int_{T_k}^{T_{k+1}} \int_{T_k}^{t} \left|\la \eta_s,\varphi_i\ra-\sum_{p=0}^{q-1} [(t-T_k)^p/p!]\la \eta_{T_k},(\mL^\eta)^p\varphi_i\ra\right| ds dt \notag \\
&& \quad + C||g||_{3,\infty}\sum_{i=0}^{d}\sum_{p=0}^{q-1} \Delta_{T_{k+1}}^{p+1}\left|\la \eta_{T_k},(\mL^{\eta})^p\varphi_i\ra - \la \hmu_{T_k}, (\mL^{\hmu})^{p}\varphi_i\ra \right|\notag
\end{eqnarray}
and
\begin{eqnarray}\label{OnestepEuler3}
&&\left|\E \left( \left[g\left(X_{T_{k+1}}^{T_k,y,\eta}\right)- g\left(\tX_{T_{k+1}}^{T_k,y,\hmu}\right)\right] I^{T_k,T_{k+1}}_{(0,l)} \right) \right|\\
&& \quad \leq C||g||_{3,\infty}\Delta_{T_{k+1}} \sum_{i=0}^{d} \int_{T_k}^{T_{k+1}} \int_{T_k}^{t} \left|\la\eta_s,\varphi_i\ra-\sum_{p=0}^{q-1} [(t-T_k)^p/p!]\la \eta_{T_k}, (\mL^\eta)^p\varphi_i\ra\right| ds dt \notag\\
&& \quad + C||g||_{3,\infty}\sum_{i=0}^{d}\sum_{p=0}^{q-1} \Delta_{T_{k+1}}^{p+2}\left|\la \eta_{T_k}, (\mL^\eta)^p\varphi_i\ra - \la \hmu_{T_k},(\mL^{\hmu})^{p}\varphi_i\ra\right|.\notag
\end{eqnarray}
\end{lemme}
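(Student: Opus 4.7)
My strategy is to turn the weak-error question into a computation about a linear backward PDE via It\^o's formula. The key auxiliary object is
\[
\phi(t,y) := P^\eta_{t,T_{k+1}} g(y) = \EX{g(X_{T_{k+1}}^{t,y,\eta})}, \qquad (t,y) \in [T_k, T_{k+1}] \times \RR^d,
\]
which, by Lemma \ref{PropPDE}, solves $\partial_t \phi + \mL^\eta_t \phi = 0$ with terminal datum $g$ and satisfies $\|D^j \phi(t,\cdot)\|_\infty \leq C \|g\|_{j,\infty}$ for every $j \in \NN$.

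For the first estimate \eqref{OnestepEuler1}, I apply It\^o's formula to the process $(\phi(s, \tX^{T_k,y,\hmu}_s))_{s \in [T_k, T_{k+1}]}$ and take the expectation. Because $\phi$ solves the PDE for $\mL^\eta$ while $\tX$ has generator $\tilde{\mL}^{\hmu}$, the $ds$-term reduces to an integral of $(\tilde{\mL}^{\hmu}_s - \mL^\eta_s) \phi(s, \cdot)$. Since this operator difference acts only through the law-dependent slot of the vector fields $V_i$, the Lipschitz regularity of $V_i$ in its third argument combined with $\|D^j \phi\|_\infty \leq C \|g\|_{j,\infty}$ ($j=1,2$) yields
\[
\bigl|(\tilde{\mL}^{\hmu}_s - \mL^\eta_s) \phi(s, y')\bigr| \leq C \|g\|_{2,\infty} \sum_{i=0}^d \bigl|\la \eta_s, \varphi_i\ra - F_i(s, \hmu_{T_k})\bigr|,
\]
where $F_i(s, \hmu_{T_k}) = \sum_{p=0}^{q-1}((s-T_k)^p/p!)\la\hmu_{T_k},(\mL^{\hmu})^p\varphi_i\ra$. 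A triangular splitting of the right-hand side around $\sum_{p=0}^{q-1}((s-T_k)^p/p!)\la\eta_{T_k},(\mL^\eta)^p \varphi_i\ra$, followed by integration in $s$ over $[T_k, T_{k+1}]$, produces the two contributions announced in \eqref{OnestepEuler1}.

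For \eqref{OnestepEuler2} and \eqref{OnestepEuler3} the same strategy is applied, but weighted by a stochastic factor. I introduce the Feynman--Kac representations
\[
\phi^{(l)}(t,y) := \EX{g(X^{t,y,\eta}_{T_{k+1}}) I^{t,T_{k+1}}_{(l)}}, \qquad \phi^{(0,l)}(t,y) := \EX{g(X^{t,y,\eta}_{T_{k+1}}) I^{t,T_{k+1}}_{(0,l)}},
\]
which vanish at $t=T_{k+1}$ and satisfy source-PDEs of the form $(\partial_t + \mL^\eta_t) \phi^{(l)} = - \nabla \phi \cdot V_l$ (resp.\ with a further $ds$-smoothing for $\phi^{(0,l)}$). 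Applying It\^o's formula to the combinations $\phi^{(l)}(s, \tX_s) + \phi(s, \tX_s) I^{T_k,s}_{(l)}$ and analogously for $(0,l)$, then taking expectations, decomposes the target quantity into three contributions: (i) a bracket-type term $\int_{T_k}^{T_{k+1}} \EX{(\tilde{\mL}^{\hmu}_s - \mL^\eta_s) \phi^{(l)}(s, \tX_s)}\, ds$, which produces the announced double time integral since $\|\phi^{(l)}(s,\cdot)\|_{2,\infty}$ is dominated by $\int_s^{T_{k+1}}\|g\|_{3,\infty}\, du$; (ii) a martingale-correction term $\int_{T_k}^{T_{k+1}} \EX{I^{T_k,s}_{(l)} (\tilde{\mL}^{\hmu}_s - \mL^\eta_s) \phi(s, \tX_s)}\, ds$, which is of higher order thanks to the It\^o-product expansion of $I^{T_k,s}_{(l)}f(\tX_s)$ and Lemma \ref{lemmathatgivestheorderoftheiterateditointegral}; and (iii) a pure law-difference term $\int_{T_k}^{T_{k+1}} \EX{\nabla \phi(s,\tX_s) \cdot [V_l(s,\tX_s,F_l) - V_l(s,\tX_s,\la\eta_s,\varphi_l\ra)]}\, ds$ controlled directly by the Lipschitz property of $V_l$. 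The case \eqref{OnestepEuler3} is identical up to replacing $I_{(l)}$ by $I_{(0,l)}$ and picking up the extra $\Delta_{T_{k+1}}$ factor from the size of the iterated integral (Lemma \ref{lemmathatgivestheorderoftheiterateditointegral}).

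The main technical hurdle I expect is achieving the claimed $\|g\|_{3,\infty}$ regularity in \eqref{OnestepEuler2} rather than $\|g\|_{4,\infty}$: a brute-force Cauchy--Schwarz bound on term (ii) using $\EX{|I^{T_k,s}_{(l)}|} \leq C\sqrt{s-T_k}$ consumes one more derivative of $g$ than allowed, and one must instead expand $I^{T_k,s}_{(l)} f(\tX_s)$ via It\^o's product formula to extract an extra factor of $(s-T_k)^{1/2}$ at the cost of only $\|f\|_{1,\infty}$. Combining this with Fubini to recast the single time integral as the announced $\int\int$ structure delivers the stated bound under the minimal regularity on $g$.
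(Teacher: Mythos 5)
Your treatment of the first estimate \eqref{OnestepEuler1} coincides with the paper's: both take the auxiliary function $\phi=P^{\eta}_{\cdot,T_{k+1}}g$ (the paper's $\tu$, solving the linear PDE of Lemma \ref{PropPDE} with terminal datum $g$), apply It\^o's formula to $\phi(s,\tX^{T_k,y,\hmu}_s)$, reduce the error to the generator difference $(\tilde{\mL}^{\hmu}-\mL^{\eta})\phi$, and split the law error into the in-time Taylor remainder of $t\mapsto\la\eta_t,\varphi_i\ra$ plus the measure-approximation terms. For the weighted estimates \eqref{OnestepEuler2}--\eqref{OnestepEuler3} your bookkeeping is genuinely different: the paper multiplies the martingale representations of $g(X^{T_k,y,\eta}_{T_{k+1}})$ and $g(\tX^{T_k,y,\hmu}_{T_{k+1}})$ by the iterated integral, computes the brackets, and obtains the double time integral and the $\|g\|_{3,\infty}$ cost by applying the first assertion of the lemma \emph{recursively} to the function $\mc{V}_{(l)}\tu(t,\cdot)$; you instead introduce the source solutions $\phi^{(l)},\phi^{(0,l)}$ and apply It\^o to the corrected process $\phi^{(l)}(s,\tX_s)+\phi(s,\tX_s)I^{T_k,s}_{(l)}$, so that the double integral arises from the Duhamel representation $\phi^{(l)}(t,\cdot)=\int_t^{T_{k+1}}P^{\eta}_{t,r}\big[\mc{V}_{(l)}\phi(r,\cdot)\big]\,dr$, whose second derivatives carry a factor $(T_{k+1}-t)$ at the price of three derivatives of $g$. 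The two decompositions are term-by-term equivalent (your (i)/(ii)/(iii) are the paper's third/first/second terms) and rest on the same inputs, namely Lemma \ref{PropPDE} and the Lipschitz dependence of the $V_i$ on their measure slot; your route avoids the recursion but requires the extra regularity check on $\phi^{(l)}$, and for \eqref{OnestepEuler3} the correct telescoping process is $\phi^{(0,l)}(s,\tX_s)+(s-T_k)\phi^{(l)}(s,\tX_s)+\phi(s,\tX_s)I^{T_k,s}_{(0,l)}$, i.e.\ it carries a cross term your sketch glosses over.

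Your closing "technical hurdle" about term (ii) is misdiagnosed: that term involves only $(\tilde{\mL}^{\hmu}-\mL^{\eta})\phi$, hence two derivatives of $\phi$ and only $\|g\|_{2,\infty}$, so no $\|g\|_{4,\infty}$ arises from the crude bound. What the crude bound $\E|I^{T_k,s}_{(l)}|\le C(s-T_k)^{1/2}$ really produces is a single time integral of the law error with weight $(s-T_k)^{1/2}$, which is not literally of the double-integral form displayed in the statement; but the paper's own proof keeps exactly the same contribution (its final estimate contains single-integral terms weighted by $\|g\|_{1,\infty}+\|g\|_{2,\infty}(t-T_k)^{1/2}$), and in every application of the lemma (with $\eta=\mu$, where the Taylor remainder is $O((s-T_k)^{q})$) such terms are of order $\Delta_{T_{k+1}}^{q+3/2}$ and harmless. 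So you should simply retain that term rather than try to remove it; the It\^o-product expansion you propose would require two derivatives of $(\tilde{\mL}^{\hmu}-\mL^{\eta})\phi$, i.e.\ $\|g\|_{4,\infty}$, without improving the time weight, and is neither needed nor helpful.
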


\begin{lemme}\label{OnestepCub}
Let $ n \geq 1$, $g$ be a $C^{n+2}_b$ function  from $\R^d$ to $\R$ and $\Q$ be a cubature measure of order $n$. Then, there exist constants $C, C'$ depending only on $V_{0:d},d,n$ such that for all $i=1,\ldots, d$, for all $k=1,\cdots,N-1$:
\begin{eqnarray}
&&\left|(\tilde{P}^{\eta}_{T_k,T_{k+1}} -\tilde{Q}^{\eta}_{T_k,T_{k+1}})g(y) \right| = \left|(\E-\E_{\Q} )\left[g(\tilde{X}_{T_{k+1}}^{T_k,y,\eta})\right] \right|\leq C \sum_{l=n+1}^{n+2} ||g||_{l,\infty} \Delta_{T_{k+1}}^{(l)/2} \label{Eq:OnestepCub1}\\
&&\left|(\E-\E_{\Q})\left[g(\tilde{X}_{T_{k+1}}^{T_k,y,\eta}) I_{(i)}^{T_k,T_{k+1}} \right] \right|\leq C \sum_{l=n}^{n+1} ||g||_{l,\infty} \Delta_{T_{k+1}}^{(l+1)/2} \label{Eq:OnestepCub2}\\
&& \left|(\E-\E_{\Q})\left[g(\tilde{X}_{T_{k+1}}^{T_k,y,\eta}) I_{(0,i)}^{T_k,T_{k+1}} \right] \right|\leq C' \sum_{l=n-2}^{n-1} ||g||_{l,\infty} \Delta_{T_{k+1}}^{(l+3)/2}, \label{Eq:OnestepCub3}
\end{eqnarray}
for all $y \in \R^d$.\\
\end{lemme}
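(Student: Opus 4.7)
The plan is to expand $g(\tilde{X}_{T_{k+1}}^{T_k,y,\eta})$ (or its product with an iterated integral) via the It\^o–Taylor formula of Lemma \ref{Theo:ItoTaylorExpansion}, use the defining property of the cubature formula of order $n$ to cancel every term whose ``degree'' does not exceed $n$, and bound the remainder with Lemma \ref{lemmathatgivestheorderoftheiterateditointegral}. Notice that on each interval $[T_k,T_{k+1}]$ the coefficients driving $\tilde{X}^{T_k,y,\eta}$ are deterministic (the law appears only through $\eta_{T_k}$ and through the polynomial $\sum_p (s-T_k)^p/p!\,\la\eta_{T_k},(\tilde{\mL}^\eta)^p\varphi_i\ra$), so the classical cubature framework applies directly to $\tilde{X}$; the cubature matches iterated Stratonovitch integrals, and the correction between It\^o and Stratonovitch is absorbed in $\bV_0$ defined by \eqref{Eq:V0Bar}.

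For \eqref{Eq:OnestepCub1} I would apply Lemma \ref{Theo:ItoTaylorExpansion} at order $n$ to write
\[
g(\tilde{X}_{T_{k+1}}^{T_k,y,\eta}) = g(T_k,y) + \sum_{\beta\in\mc{A}_n}\mc{V}_\beta g(T_k,y)\,I^{T_k,T_{k+1}}_\beta + \sum_{\beta\in\p\mc{A}_n} I^{T_k,T_{k+1}}_\beta\!\left[\mc{V}_\beta g(\cdot,\tilde{X}_\cdot^{T_k,y,\eta})\right].
\]
When I apply $\E$ and $\E_{\Q}$ and subtract, the deterministic coefficients $\mc{V}_\beta g(T_k,y)$ are identical and the expectations of $I^{T_k,T_{k+1}}_\beta$ coincide for $\beta\in\mc{A}_n$ by the very definition of an $n$-cubature, so only the residual sum over $\p\mc{A}_n\subset\mc{A}_{n+2}\setminus\mc{A}_n$ survives. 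Since every such $\beta$ satisfies $\|\beta\|\in\{n+1,n+2\}$, Lemma \ref{lemmathatgivestheorderoftheiterateditointegral} furnishes a bound of order $\Delta_{T_{k+1}}^{\|\beta\|/2}$, and $\|\mc{V}_\beta g\|_\infty\le C(V_{0:d},d)\|g\|_{\|\beta\|,\infty}$ yields exactly the sum in \eqref{Eq:OnestepCub1}.

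For \eqref{Eq:OnestepCub2} and \eqref{Eq:OnestepCub3} I would run the same strategy but expand $g$ to one or two orders less and multiply by $I^{T_k,T_{k+1}}_{(i)}$ or $I^{T_k,T_{k+1}}_{(0,i)}$ respectively. The leading polynomial part is again cancelled by the cubature, because the product $I_\beta\cdot I_{(i)}$ (resp.\ $I_\beta\cdot I_{(0,i)}$) can, after It\^o–Stratonovitch conversion, be written as a finite combination of iterated integrals whose total degree does not exceed $n$. The remainder is controlled by the two conditional estimates of Lemma \ref{lemmathatgivestheorderoftheiterateditointegral}: the multiplier $I_{(i)}$ gives the extra factor $\Delta^{1/2}$ and forces the index $\beta^+ = i$ constraint (which matches one of the Brownian components of the remainder), whereas $I_{(0,i)}$ gives the extra factor $\Delta^{3/2}$. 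This shifts the admissible exponents to $(l+1)/2$ and $(l+3)/2$ and, crucially, \emph{lowers} the smoothness needed for $g$ by one and two units respectively, producing exactly the ranges $l\in\{n,n+1\}$ and $l\in\{n-2,n-1\}$ stated in the lemma.

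The only genuinely delicate point is the time dependence of the coefficients of $\tilde{X}$: the expansion must put every stochastic fluctuation inside $I^{T_k,T_{k+1}}_\beta$ while keeping $\mc{V}_\beta g$ frozen at $(T_k,y)$. This is exactly what Lemma \ref{Theo:ItoTaylorExpansion} does, since the operator $\mc{V}_{(0)}=\p_t+\mc{L}$ absorbs both the explicit $s$–dependence of the $V_i$ and the polynomial correction in the frozen law, at the cost of one additional derivative of $g$ per iteration. This book-keeping is what explains why the upper index in \eqref{Eq:OnestepCub1} reaches $l=n+2$ rather than $l=n+1$, and justifies the regularity requirements in all three estimates.
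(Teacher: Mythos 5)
Your overall plan (expand to degree $n$, cancel the matched terms by the cubature property, bound the remainder) is in the same family as the paper's argument, which expands the solution $\tu$ of the linear PDE \eqref{PDE} with boundary data $g$ via the Stratonovich--Taylor formula (Theorem 5.6.1 of \cite{kloeden_numerical_1992}) and bounds the remainder as in Proposition 2.1 of \cite{lyons_cubature_2004}. However, the way you implement the cancellation contains a genuine flaw. The cubature measure matches expectations of iterated \emph{Stratonovich} integrals; it does not match the It\^o iterated integrals $I_\beta$ produced by Lemma \ref{Theo:ItoTaylorExpansion}, so the assertion that ``the expectations of $I^{T_k,T_{k+1}}_\beta$ coincide for $\beta\in\mc{A}_n$ by the very definition of an $n$-cubature'' is not correct as written. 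More importantly, the identity in Lemma \ref{Theo:ItoTaylorExpansion} is an almost-sure identity under the Wiener measure obtained by iterating It\^o's formula; under $\E_{\Q}$ the process is an ODE solution along a bounded-variation path, and the valid expansion there is the Stratonovich/Stieltjes hierarchy built on $\bV_0$ from \eqref{Eq:V0Bar}, whose coefficients differ from the It\^o ones (the It\^o $\mc{V}_{(0)}=\p_t+\mc{L}$ carries the second-order term $\tfrac12{\rm Tr}[VV^TD_y^2]$). Consequently the difference $(\E-\E_{\Q})$ does not factor as ``common coefficients times differences of expectations of $I_\beta$'' in your It\^o formulation: both expectations must be written with the same Stratonovich hierarchy, which is exactly where the cubature definition applies, and where your remark that the It\^o--Stratonovich correction is absorbed in $\bV_0$ has to become an actual step of the proof rather than an aside.

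A second, related gap is the remainder on the cubature side: Lemma \ref{lemmathatgivestheorderoftheiterateditointegral} bounds conditional expectations of products of It\^o multiple integrals under the Wiener measure only, so it says nothing about the residual terms under $\E_{\Q}$. There one needs the pathwise estimate for iterated Stieltjes integrals along the rescaled paths $\sqrt{\Delta_{T_{k+1}}}\,\omega_j(\cdot)$, namely $|I^{T_k,T_{k+1}}_\beta|\leq C\Delta_{T_{k+1}}^{\|\beta\|/2}$, which is the content of Proposition 2.1 in \cite{lyons_cubature_2004} invoked by the paper. Once these two points are repaired (Stratonovich expansion on both sides, Brownian remainder via Lemma \ref{lemmathatgivestheorderoftheiterateditointegral} or its Stratonovich analogue, cubature remainder via the pathwise bound), your degree bookkeeping for \eqref{Eq:OnestepCub1}--\eqref{Eq:OnestepCub3}, including the shuffle argument for the products with $I_{(i)}$ and $I_{(0,i)}$, does produce the stated exponents and norm indices; note only that for \eqref{Eq:OnestepCub3} the expansion must stop at degree $n-3$, not ``one or two orders less''.
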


\begin{lemme} \label{Lem:OneStepAll}
Let $ n \geq 1$ and $g$ be a $C^{n+2}_b$ function from $\RR^d$ to $\RR$. Let $\Q$ be a cubature measure of order $n$, and $\hat{X}$ be the associated cubature tree. Then, there exists a constant  $C$ depending only on $d$, $q$, $V_{0:d},n,T,||\varphi_{0:d}||_{2q+n+2,\infty}$ such that, for all $k=1,\cdots,N-1$:
\begin{eqnarray}
 \left|\E \left[g\left(X_{T_{k+1}}^{T_k,y,\mu}\right)\right]-  \E_{\Q_{T_k,T_{k+1}}} \left[g\left(X_{T_{k+1}}^{T_k,y,\hmu}\right)\right] \right| &\leq & C ||g||_{2,\infty} \left[ \Delta_{T_{k+1}}^{q+1}+ \Delta_{T_{k+1}} N^{- [ (n-1) \wedge 2q]/2} \right]\notag\\
 && \  + C||g||_{n+1,\infty} \Delta_{T_{k+1}}^{(n+1)/2} + C||g||_{n+2,\infty} \Delta_{T_{k+1}}^{(n+2)/2},\label{Eq:Lem:OneStepAll1}
\end{eqnarray} 
\begin{eqnarray}
\left|\E \left[\left(g\left(X_{T_{k+1}}^{T_k,y,\mu}\right)- g\left(\hX_{T_{k+1}}^{T_k,y,\hmu}\right)\right) I_{(l)}^{T_k,T_{k+1}}\right]\right| &\leq &C ||g||_{3,\infty} \left[ \Delta_{T_{k+1}}^{q+1}+ \Delta_{T_{k+1}}^2 N^{- [ (n-1) \wedge 2q]/2} \right]  \notag \\
&& \quad + C||g||_{n,\infty} \Delta_{T_{k+1}}^{(n+1)/2} + C||g||_{n+1,\infty} \Delta_{T_{k+1}}^{(n+2)/2} \label{Eq:Lem:OneStepAll2}
\end{eqnarray}
and
\begin{eqnarray}
\left|\E \left[\left(g\left(X_{T_{k+1}}^{T_k,y,\mu}\right)-  g\left(\hX_{T_{k+1}}^{T_k,y,\hmu}\right)\right) I_{(0,l)}^{T_k,T_{k+1}}\right]\right| &\leq &C ||g||_{3,\infty} \left[ \Delta_{T_{k+1}}^{q+2}+ \Delta_{T_{k+1}}^3 N^{- [ (n-1) \wedge 2q]/2} \right]\notag\\
&& \quad+ C||g||_{n-2,\infty} \Delta_{T_{k+1}}^{(n+1)/2} + C||g||_{ n-1,\infty} \Delta_{T_{k+1}}^{(n+2)/2}.\label{Eq:Lem:OneStepAll3}
\end{eqnarray}
\end{lemme}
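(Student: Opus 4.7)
The plan is to follow the same \emph{insert--split--apply} strategy used throughout the forward error analysis of Section 8.1: introduce the auxiliary frozen-dynamics $\tX^{T_k,y,\hmu}$ of \eqref{Eq:ArtifDyna2}, driven by a true Brownian motion but with coefficients frozen at $\hmu$, so as to decouple the ``Euler-type'' error (changing the law inside the coefficients) from the ``cubature'' error (replacing the Wiener measure by $\Q_{T_k,T_{k+1}}$). For \eqref{Eq:Lem:OneStepAll1} this gives
\[
\E g(X^{T_k,y,\mu}_{T_{k+1}}) - \E_{\Q_{T_k,T_{k+1}}} g(X^{T_k,y,\hmu}_{T_{k+1}})
= \bigl(P^{\mu}_{T_k,T_{k+1}}-\tilde P^{\hmu}_{T_k,T_{k+1}}\bigr)g(y) + \bigl(\tilde P^{\hmu}_{T_k,T_{k+1}}-Q^{\hmu}_{T_k,T_{k+1}}\bigr)g(y),
\]
and the analogue for \eqref{Eq:Lem:OneStepAll2}--\eqref{Eq:Lem:OneStepAll3} with $g$ replaced by $g\cdot I^{T_k,T_{k+1}}_{(l)}$ or $g\cdot I^{T_k,T_{k+1}}_{(0,l)}$. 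The cubature bracket is handled immediately by Lemma \ref{OnestepCub} applied with $\eta=\hmu$ and order $n$, producing $\|g\|_{n+1,\infty}\Delta_{T_{k+1}}^{(n+1)/2}+\|g\|_{n+2,\infty}\Delta_{T_{k+1}}^{(n+2)/2}$ for \eqref{Eq:Lem:OneStepAll1} and the corresponding terms from \eqref{Eq:OnestepCub2}--\eqref{Eq:OnestepCub3} for \eqref{Eq:Lem:OneStepAll2}--\eqref{Eq:Lem:OneStepAll3}.

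The change-of-dynamics bracket is then bounded by Lemma \ref{OnestepEuler} applied with $\eta=\mu$, which produces two residual types that both need further control. The first is an Itô--Taylor remainder of the form $\int_{T_k}^{T_{k+1}}|\la\mu_t,\varphi_i\ra-\sum_{p=0}^{q-1}[(t-T_k)^p/p!]\la\mu_{T_k},(\mL^{\mu})^p\varphi_i\ra|\,dt$, which I handle exactly as in Claim \ref{c4}: applying Lemma \ref{Theo:ItoTaylorExpansion} of order $q$ to $\varphi_i$ along the true McKean--Vlasov dynamics $X^{T_k,\cdot,\mu}$ and averaging the initial position against $\mu_{T_k}$ gives a pointwise $O((t-T_k)^q)$ estimate, hence the $C\|\varphi\|_{2q,\infty}\Delta_{T_{k+1}}^{q+1}$ contribution, with an extra $\Delta_{T_{k+1}}$ factor coming for free from the additional time integration in \eqref{OnestepEuler2}--\eqref{OnestepEuler3}.

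The second residual type, $\sum_{p=0}^{q-1}\Delta_{T_{k+1}}^{p+1}|\la\mu_{T_k},(\mL^{\mu})^p\varphi_i\ra - \la\hmu_{T_k},(\mL^{\hmu})^p\varphi_i\ra|$, is the crux of the argument because the generator itself changes with the measure. I split each summand as
\[
\la\mu_{T_k}-\hmu_{T_k},(\mL^{\mu})^p\varphi_i\ra \;+\; \la\hmu_{T_k},\bigl((\mL^{\mu})^p-(\mL^{\hmu})^p\bigr)\varphi_i\ra.
\]
The first piece is handled by the already established forward estimate \eqref{Eq:ErrorforSB} of Theorem \ref{MR}, applied to the $C_b^\infty$ test function $(\mL^{\mu})^p\varphi_i$ whose sup-norms up to order $n+2$ are controlled by $\|\varphi\|_{2p+n+2,\infty}$ and the regularity of the $V_i$, yielding the rate $N^{-((n-1)/2)\wedge q}$. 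For the second piece I telescope
\[
(\mL^{\mu})^p-(\mL^{\hmu})^p=\sum_{j=0}^{p-1}(\mL^{\mu})^{j}\bigl(\mL^{\mu}-\mL^{\hmu}\bigr)(\mL^{\hmu})^{p-1-j},
\]
and use the smoothness (in particular Lipschitz regularity in the last scalar argument) of $V_0,\ldots,V_d$ to dominate $\mL^{\mu}-\mL^{\hmu}$, acting on any fixed smooth function, by $C\sum_i|\la\mu_{T_k}-\hmu_{T_k},\varphi_i\ra|$ times a $C^2$-norm of the argument; one more invocation of \eqref{Eq:ErrorforSB} on each $\varphi_i$ closes the loop at the same rate. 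Summing in $p$, the geometric factor $\sum_{p=0}^{q-1}\Delta_{T_{k+1}}^{p+1}$ collapses to a leading $C\Delta_{T_{k+1}}$ and we recover the announced $\Delta_{T_{k+1}}\,N^{-((n-1)\wedge 2q)/2}$ contribution of \eqref{Eq:Lem:OneStepAll1}. For \eqref{Eq:Lem:OneStepAll2}--\eqref{Eq:Lem:OneStepAll3}, the extra $\sqrt{\Delta_{T_{k+1}}}$ (resp.\ $\Delta_{T_{k+1}}$) gain produced by the moment orthogonality of the additional integral $I^{T_k,T_{k+1}}_{(l)}$ (resp.\ $I^{T_k,T_{k+1}}_{(0,l)}$) has already been absorbed into the bounds \eqref{OnestepEuler2}--\eqref{OnestepEuler3} and \eqref{Eq:OnestepCub2}--\eqref{Eq:OnestepCub3}, so exactly the same three-step argument propagates and delivers the claimed rates.
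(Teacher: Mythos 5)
Your proposal is correct and follows essentially the paper's own argument: the same insertion of the frozen-dynamics process $\tX^{T_k,y,\hmu}$ to split the error into an Euler-type bracket, handled by Lemma \ref{OnestepEuler} with $\eta=\mu$ together with the Itô--Taylor bound of Claim \ref{c4} and the forward estimate \eqref{Eq:ErrorforSB}, and a pure cubature bracket handled by Lemma \ref{OnestepCub}. The only difference is that you make explicit the treatment of the term $\la\mu_{T_k},(\mL^{\mu})^p\varphi_i\ra-\la\hmu_{T_k},(\mL^{\hmu})^p\varphi_i\ra$ via telescoping the generator powers, a detail the paper subsumes in its citation of Theorem \ref{MR}, so no further comment is needed.
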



\subsection{Proofs of Lemmas \ref{OnestepEuler}, \ref{OnestepCub} and \ref{Lem:OneStepAll}}

\subsubsection{Proof of Lemma \ref{OnestepEuler}}
Let $k\in\{0,\cdots,N-1\}$, consider the PDE \eqref{PropPDE} with $g$ as boundary condition. It is clear that this PDE admits a unique solution $\tu$ and that there exists a positive constant $C(T ,V_{0:d})$ such that for every multi-index of space derivatives with $\beta\in \mc{A}_3$ 
\begin{eqnarray}\label{boundtu}
&&||\tu||_{\infty} + ||D_\beta \tu ||_{\infty} \leq C(T, V_{0:d}) ||g||_{||\beta||,\infty}.
\end{eqnarray}
Let us write:
\begin{equation*}
\left|(P_{T_k,T_{k+1}}^{\eta}-\tilde{P}_{T_k,T_{k+1}}^{\hmu}) g(y)\right| = \left|\E \left[g\left(X_{T_{k+1}}^{T_k,y,\eta}\right)- g\left(\tX_{T_{k+1}}^{T_k,y,\hmu}\right)\right]  \right|.
\end{equation*}
Now, since $\tilde{u}$ is the solution of \eqref{PropPDE} we have that
 \begin{eqnarray}
&&g\left(X_{T_{k+1}}^{T_k,y,\eta}\right)=\tu(T_k,y)+ \sum_{j=1}^d\int_{T_k}^{T_{k+1}} \mc{V}^{\eta}_{(j)}\tu(s, X_s^{T_k,y,\eta}) dB^j_s \label{repg1},
 \end{eqnarray}
by It\^{o}'s Formula and
\begin{eqnarray}
 g\left(\tX_{T_{k+1}}^{T_k,y,\hmu}\right)&=&\tu(T_k,y)+\int_{T_k}^{T_{k+1}}(\mL^{\eta}-\tilde{\mL}^{\hmu})\tu(s, \tX_s^{T_k,y,\hmu}) ds\\ \notag
 &&+ \sum_{j=1}^d\int_{T_k}^{T_{k+1}} {\mc{V}}^{\hmu}_{(j)}\tu(s, \tX_s^{T_k,y,\hmu}) dB^j_s \label{repg2}.
\end{eqnarray}
Therefore,
\begin{eqnarray}\label{ingrez}
\E  \left[g\left(X_{T_{k+1}}^{T_k,y,\eta}\right)- g\left(\tX_{T_{k+1}}^{T_k,y,\hmu}\right)\right]  = -\E\int_{T_k}^{T_{k+1}} \left(\mL^{\eta} - \tilde{\mL}^{\hmu}\right) \tu(t,\tX_{t}^{T_k,y,\hmu}) dt,
\end{eqnarray}
since $\p_t \tu = -\mL^{\eta}\tu$. As \eqref{boundtu} implies that $\tilde{u}$ and its two first derivatives are bounded, we may control the term above by the difference between the two generators, then by the difference between the frozen (in space of probability measure) $(\eta_t)_{T_K\leq t \leq T_{k+1}}$ and the approximate and frozen (in time) measure $\hmu_{T_k}$. Hence, taking into account the particular dependence on the measure in our framework, we deduce:

\begin{eqnarray*}
&&\left|\E  \left[g\left(X_{T_{k+1}}^{T_k,y,\eta}\right)- g\left(\tX_{T_{k+1}}^{T_k,y,\hmu}\right)\right] \right|\\
&&\quad \leq C||g||_{2,\infty} \int_{T_k}^{T_{k+1}} \bigg[\sum_{i=0}^{d} \bigg|\la \eta_t,\varphi_i\ra - \sum_{p=0}^{q-1} [(t-T_k)^p/p!]\la \eta_{T_k}, (\mL^\eta)^p \varphi_i\ra \bigg|\\
&& \hphantom{\quad \leq C||g||_{2,\infty} \int_{T_k}^{T_{k+1}}} +\sum_{i=0}^{d} \sum_{p=0}^{q-1} \Delta_{T_{k+1}}^p \left|\la \eta_{T_k}, (\mL^\eta)^p\varphi_i\ra - \la \hmu_{T_k}, (\mL^{\hmu})^p\varphi_i\ra \right|\bigg]dt\\
&&\quad \leq C(T,V_{0:d})||g||_{2,\infty}\int_{T_k}^{T_{k+1}} \sum_{i=0}^{d} \bigg|\la \eta_t,\varphi_i\ra-\sum_{p=0}^{q-1} [(t-T_k)^p/p!]\la \eta_{T_k}, (\mL^\eta)^p \varphi_i\ra \bigg|dt\\
&&\qquad +C(T,V_{0:d})||g||_{2,\infty} \sum_{i=0}^{d}\sum_{p=0}^{q-1} \Delta_{T_{k+1}}^p\Delta_{T_{k+1}}\left|\la\eta_{T_k},(\mL^{\eta})^p\varphi_i\ra - \la \hmu_{T_k},(\mL^{\hmu})^p\varphi_i\ra \right|
\end{eqnarray*}
This concludes the proof of the first assertion. Now, we deduce from \eqref{repg1} and \eqref{repg2} and integration by parts, that:

\begin{eqnarray}
&&\E \left( \left[g\left(X_{T_{k+1}}^{T_k,y,\eta}\right)- g\left(\tX_{T_{k+1}}^{T_k,y,\hmu}\right)\right] I^{T_k,T_{k+1}}_{(l)} \right) \label{Eq:second_exp} \\
&&  = \E \int_{T_k}^{T_{k+1}}  \left(\mL^{\eta} - \tilde{\mL}^{\hmu}\right) \tu(t,\tX_{t}^{T_k,y,\hmu}) I^{T_k,t}_{(l)}dt \notag\\
&& \quad + \E \int_{T_k}^{T_{k+1}}  \left[\mc{V}_{(l)}(t,X_{t}^{T_k,y,\eta},\la \eta_t,\varphi_l\ra)-\mc{V}_{(l)}(t,X_{t}^{T_k,y,\eta},\la\hmu_{T_k},\varphi_l\ra )\right]\tu\left(t,X_{t}^{T_k,y,\eta}\right) dt \notag\\
&& \quad  + \frac{1}{2} \E\int_{T_k}^{T_{k+1}} \left[ \mc{V}_{(l)}(t,X_{t}^{T_k,y,\eta},\la\hmu_{T_k},\varphi_l\ra) \tu(t,X_{t}^{T_k,y,\eta})-\mc{V}_{(l)}(t,\tX_{t}^{T_k,y,\hmu},\la\hmu_{T_k},\varphi_l\ra) \tu(t,\tX_{t}^{T_k,y,\hmu})\right] dt. \notag
\end{eqnarray}
First, note that the first two terms in the right hand side above may be controlled by the difference between the coefficients times the supremum norm of the first and second order derivatives of $\tu$ times the order of the integrals, as we did for \eqref{ingrez}. Second, note that the first assertion of Lemma \ref{OnestepEuler} can be applied to the function $\mc{V}_{(l)}(t,.,\la \hmu_{T_k},\varphi_l\ra) \tu(t,.)$ in the last term on the right hand side above. These arguments, together with the bound \eqref{boundtu} lead to:

\begin{eqnarray*}
&&\left|\E \left( \left[g\left(X_{T_{k+1}}^{T_k,y,\eta}\right)- g\left(\tX_{T_{k+1}}^{T_k,y,\hmu}\right)\right] I^{T_k,T_{k+1}}_{(l)} \right)\right| \\
&& \leq C(T,V_{0:d})\int_{T_k}^{T_{k+1}} (||g||_{1,\infty}+||g||_{2,\infty}(t-T_k)^{1/2}) \sum_{i=0}^{d} \bigg|\la\eta_t,\varphi_i\ra-\sum_{p=0}^{q-1} [(t-T_k)^p/p!]\la\eta_{T_k},(\mL^\eta)^p \varphi_i\ra \bigg|dt \notag \\
&&\quad +C(T,V_{0:d})\sum_{i=0}^{d}\sum_{p=0}^{q-1} \Delta_{T_{k+1}}^p\Delta_{T_{k+1}}(||g||_{1,\infty}+||g||_{2,\infty}\Delta_{T_{k+1}}^{1/2})\left|\la \eta_{T_k}, (\mL^\eta)^p\varphi_i\ra - \la \hmu_{T_k}, (\mL^{\hmu})^p\varphi_i\ra \right| \notag \\
&&\quad + C(T,V_{0:d})||g||_{3,\infty} \Bigg\{\int_{T_k}^{T_{k+1}} \int_{T_k}^{t} \sum_{i=0}^{d} \bigg|\la \eta_s,\varphi_i \ra-\sum_{p=0}^{q-1} [(s-T_k)^p/p!]\la \eta_{T_k}, (\mL^\eta)^p \varphi_i\ra \bigg|dsdt \notag \\
&&\quad \hphantom{+C(T,V_{0:d})||g||_{3,\infty}}+\sum_{i=0}^{d}\sum_{p=0}^{q-1} \Delta_{T_{k+1}}^p\Delta_{T_{k+1}}^2\left|\la \eta_{T_k},(\mL^\eta)^p\varphi_i\ra- \la\hmu_{T_k}, (\mL^{\hmu})^p\varphi_i\ra \right|\Bigg\}, \notag
\end{eqnarray*}
and this concludes the proof of the second assertion. Finally, \eqref{repg1} and \eqref{repg2} and integration by parts, give: 
\begin{eqnarray*}
&&\E \left( \left[g\left(X_{T_{k+1}}^{T_k,y,\eta}\right)- g\left(\tX_{T_{k+1}}^{T_k,y,\hmu}\right)\right] I^{T_k,T_{k+1}}_{(0,l)}  \right)\\
&& =  \E \int_{T_k}^{T_{k+1}}  \left(\mL^{\eta} - \tilde{\mL}^{\hmu}\right) \tu(t,\tX_{t}^{T_k,y,\eta}) I^{T_k,t}_{(0,l)}dt  \\
&& \quad + \E \int_{T_k}^{T_{k+1}}  \left[\mc{V}_{(l)}(t,X_{t}^{T_k,y,\eta},\la\eta_t,\varphi_l\ra)-\mc{V}_{(l)}(t,X_{t}^{T_k,y,\eta},\la\hmu_{T_k},\varphi_l\ra) \right] \tu\left(t,X_{t}^{T_k,y,\eta}\right) I^{T_k,t}_{(0)} dt   \\
&& \quad  +\frac{1}{2}\E\int_{T_k}^{T_{k+1}}  \left[\mc{V}_{(l)}(t,X_{t}^{T_k,y,\eta},\la \hmu_{T_k},\varphi_l\ra) \tu(t,X_{t}^{T_k,y,\eta})-\mc{V}_{(l)}(t,\tX_{t}^{T_k,y,\hmu},\la\hmu_{T_k},\varphi_l\ra) \tu(t,\tX_{t}^{T_k,y,\hmu}) \right]  I^{T_k,t}_{(0)} dt .
\end{eqnarray*}
Note the similarity with \eqref{Eq:second_exp}. So that a similar development gives
\begin{eqnarray*}
&&\left|\E \left( \left[g\left(X_{T_{k+1}}^{T_k,y,\eta}\right)- g\left(\tX_{T_{k+1}}^{T_k,y,\hmu}\right)\right] I^{T_k,T_{k+1}}_{(0,l)} \right)\right|\\
&& \leq C(T,V_{0:d})\int_{T_k}^{T_{k+1}} (||g||_{1,\infty}(t-T_k)+||g||_{2,\infty}(t-T_k)^{3/2}) \sum_{i=0}^{d} \bigg|\la\eta_t,\varphi_i\ra-\sum_{p=0}^{q-1} [(t-T_k)^p/p!]\la \eta_{T_k}, (\mL^{\eta})^p \varphi_i\ra \bigg|dt\\
&&\quad +C(T,V_{0:d}) \Delta_{T_{k+1}} \sum_{i=0}^{d}\sum_{p=0}^{q-1} \Delta_{T_{k+1}}^p\Delta_{T_{k+1}}(||g||_{1,\infty}+||g||_{2,\infty}\Delta_{T_{k+1}}^{1/2})\left|\la \eta_{T_k}, (\mL^\eta)^p\varphi_i\ra - \la\hmu_{T_k}, (\mL^{\hmu})^p\varphi_i\ra\right|\\
&&\quad + C(T,V_{0:d}) \Delta_{T_{k+1}} ||g||_{3,\infty} \Bigg\{\int_{T_k}^{T_{k+1}} \int_{T_k}^{t} \sum_{i=0}^{d} \bigg|\la \eta_s,\varphi_i\ra-\sum_{p=0}^{q-1} [(s-T_k)^p/p!]\la \eta_{T_k},(\mL^\eta)^p \varphi_i\ra \bigg|dsdt\\
&&\qquad + \sum_{i=0}^{d}\sum_{p=0}^{q-1} \Delta_{T_{k+1}}^p\Delta_{T_{k+1}}^2\left|\la \eta_{T_k},(\mL^\eta)^p\varphi_i\ra- \la\hmu_{T_k},(\mL^{\hmu})^p\varphi_i\ra\right|\Bigg\}
\end{eqnarray*}
from where the last claim is deduced.
\qed

\subsubsection{Proof of Lemma \ref{OnestepCub}}
Let $k\in\{0,\cdots,N-1\}$, once again, we consider the unique infinitely differentiable solution $\tu$ of PDE \eqref{PropPDE} with $g$ as boundary condition. Recall that for every $\beta\in\mc{M}$ there exists a positive constant $C(T ,V_{0:d})$ such that:

\begin{eqnarray}\label{boundtu2}
&&||\tu||_{\infty} + ||D_\beta \tu ||_{\infty} \leq C(T, V_{0:d}) ||g||_{||\beta||,\infty}.
\end{eqnarray}
The result then follows from Stratonovich-Taylor expansion of $(t,y) \mapsto \tu(t,y)$ around $(T_k,X_{T_k})$ by Theorem  5.6.1 in \cite{kloeden_numerical_1992} and bounding the remainder as in Proposition 2.1 of \cite{lyons_cubature_2004}.
\qed

\subsubsection{Proof of Lemma \ref{Lem:OneStepAll}}
Note that
\begin{eqnarray}\label{decompote}
\left|\E \left[g\left(X_{T_{k+1}}^{T_k,y,\mu}\right)\right]-  \E_{\Q_{T_k,T_{k+1}}} \left[g\left(X_{T_{k+1}}^{T_k,y,\hmu}\right)\right] \right|
 &=&\left|\E \left[g\left(X_{T_{k+1}}^{T_k,y,\mu}\right)\right]-  \E \left[g\left(\tX_{T_{k+1}}^{T_k,y,\hmu}\right)\right] \right|\\
 && \quad + \left|\E \left[g\left(\tX_{T_{k+1}}^{T_k,y,\mu}\right)\right]-  \E_{\Q_{T_k,T_{k+1}}} \left[g\left(X_{T_{k+1}}^{T_k,y,\hmu}\right)\right] \right|.\notag
\end{eqnarray}
Combining estimate \eqref{OnestepEuler1} of Lemma \ref{OnestepEuler} with Claim \ref{c4} and \eqref{Eq:ErrorforSB} in Theorem \ref{MR}, we get that the first term in the right hand side is bounded by:
$$C||g||_{2,\infty} \left[ \Delta_{T_{k+1}}^{q+1}+ \Delta_{T_{k+1}} N^{- [ (n-1) \wedge 2q]/2} \right].$$
The second term in the right hand side of \eqref{decompote} can be estimated by combining this bound with the estimate \eqref{Eq:OnestepCub1} in \ref{OnestepCub} (when choosing $\eta = \mu$).

The other assertion follows from the same procedure, substituting \eqref{OnestepEuler2} (resp. \eqref{OnestepEuler3}) to \eqref{OnestepEuler1} and \eqref{Eq:OnestepCub2} (resp. \eqref{Eq:OnestepCub3}) to \eqref{Eq:OnestepCub1}.

\section{Proofs of Corollary \ref{Lipcase} and \ref{wasscase_backward}}\label{Sec:Lipschitz}

\subsection{Proof of Corollary \ref{Lipcase}}
Many practical applications, particularly in finance, require the algorithm to be able to solve problems in which the boundary condition $\phi$ is less regular, e.g. when $\phi$ is just Lipschitz. In this section, we prove how the results obtained in the regular case extend to the case when assumption \textbf{(LB)} holds as Corollary \ref{Lipcase} state.

\textbf{A preliminary result.} We use in addition an auxiliary result shown in the proof of Theorem 8 in \cite{crisan_convergence_2007}:
\begin{lemme}\label{c5}
There exists a positive constant $C$ such that:
\begin{equation*}
\sum_{j=0}^{N-2} \Delta_{T_{j+1}}^{(m+1)/2} (T-T_j)^{-m/2} \leq C L(\gamma,m),
\end{equation*}
where $L$ is defined in \eqref{Eq:MFunction}.
\end{lemme}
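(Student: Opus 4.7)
The plan is to substitute the explicit formula \eqref{Eq:TimeDiscretization} for $T_k$ into the sum and reduce the estimate to a standard Riemann-type sum, handled separately in the three ranges of $\gamma$ dictated by $L(\gamma,m)$.

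First I would compute $T-T_j = T(1-j/N)^{\gamma}$ directly from \eqref{Eq:TimeDiscretization}. Next, writing $\Delta_{T_{j+1}}=T\bigl[(1-j/N)^{\gamma}-(1-(j+1)/N)^{\gamma}\bigr]$, the mean value theorem applied to $s\mapsto s^{\gamma}$ on $[(N-j-1)/N,(N-j)/N]$ yields
\[
\Delta_{T_{j+1}}\ \leq\ \frac{T\gamma}{N}\,\bigl(1-j/N\bigr)^{\gamma-1}
\]
(up to a harmless constant even when $\gamma<1$, by using the left endpoint instead). Plugging these two bounds into the summand and simplifying the exponent
\[
\tfrac{(m+1)}{2}(\gamma-1)-\tfrac{m}{2}\gamma\ =\ \tfrac{\gamma-m-1}{2},
\]
I obtain a constant $C=C(T,\gamma,m)$ such that
\[
\sum_{j=0}^{N-2}\Delta_{T_{j+1}}^{(m+1)/2}(T-T_j)^{-m/2}\ \leq\ C\,N^{-(m+1)/2}\sum_{j=0}^{N-2}\bigl(1-j/N\bigr)^{(\gamma-m-1)/2}.
\]
Setting $k=N-j$, the remaining sum becomes $N^{-(\gamma-m-1)/2}\sum_{k=2}^{N}k^{(\gamma-m-1)/2}$, which I then estimate in the three ranges of $\gamma$.

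The three cases correspond to whether the exponent $\alpha:=(\gamma-m-1)/2$ lies above, equal to, or below $-1$. If $\gamma>m-1$ (so $\alpha>-1$), $\sum_{k=2}^{N}k^{\alpha}$ behaves like a Riemann sum of a locally integrable function and is of order $N^{\alpha+1}$, giving the total $N^{-(m-1)/2}$. If $\gamma=m-1$ (so $\alpha=-1$), the harmonic sum contributes a factor $\ln N$, producing $N^{-(m-1)/2}\ln N$. If $\gamma<m-1$ (so $\alpha<-1$), the series $\sum_{k=2}^{N}k^{\alpha}$ is bounded uniformly in $N$, so the total reduces to $N^{-(m+1)/2}\cdot N^{-\alpha}=N^{-\gamma/2}$. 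Collecting the three bounds matches the piecewise definition of $L(\gamma,m)$ in \eqref{Eq:MFunction}.

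The main technical point is making sure the elementary sum/integral comparisons are done cleanly in each regime, in particular extracting the correct logarithmic factor in the boundary case $\gamma=m-1$ (where the standard Riemann-sum argument fails because of the non-integrable singularity at the right endpoint) and controlling the tail at $j=N-2$ when $\alpha<-1$; everything else is routine bookkeeping of constants depending only on $T$, $\gamma$ and $m$.
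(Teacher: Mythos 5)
Your computation is correct, and it checks out line by line: from \eqref{Eq:TimeDiscretization} one indeed has $T-T_j=T(1-j/N)^{\gamma}$ and $\Delta_{T_{j+1}}\leq C\,N^{-1}(1-j/N)^{\gamma-1}$, the exponent bookkeeping gives $(\gamma-1)(m+1)/2-\gamma m/2=(\gamma-m-1)/2$, and the three regimes $\alpha:=(\gamma-m-1)/2$ greater than, equal to, or less than $-1$ reproduce exactly the three branches of $L(\gamma,m)$ in \eqref{Eq:MFunction}. The only point worth being fully explicit about is the case $\gamma<1$, where the mean value theorem gives the left endpoint $(1-(j+1)/N)^{\gamma-1}$; this is comparable to $(1-j/N)^{\gamma-1}$ up to the factor $2^{1-\gamma}$ precisely because the sum stops at $j=N-2$, so that $(N-j-1)/(N-j)\geq 1/2$ — your parenthetical remark is correct but this is the place where it matters. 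Note that the paper itself does not prove this lemma: it only invokes it as "shown in the proof of Theorem 8 in \cite{crisan_convergence_2007}", so your argument is not a variant of an in-paper proof but a self-contained derivation of the cited estimate, which is essentially the standard Kusuoka-type computation for the decreasing step discretization; providing it makes the error analysis of Corollary \ref{Lipcase} independent of the external reference.
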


We are now ready to examine the error convergence for the forward and backward components of the algorithm.\\

\textbf{Proof of the forward approximation in Corollary \ref{Lipcase}}

The regularity of the solution of the linear associated linear PDE is essential to our analysis. We start by stating a result in this sense under \textbf{(LB)}. This is summarized by
\begin{claim}\label{PropPDELip}
 Under \textbf{(LB)}, there exists a unique solution $\psi$ to the PDE \eqref{PDE} and for every multi-index $\beta\in \mc{M}$ there exists a constant $C$ such that:
\begin{equation}\label{Eq:Bound_derivatives_linear_Lip_u}
||D_{\beta} \psi(t,\cdot)||_\infty  \leq C(T-t)^{-(||\beta||-1)/2}
\end{equation}
\end{claim}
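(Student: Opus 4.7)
The plan is to build $\psi$ via the Feynman--Kac representation $\psi(t,y) = \E[\phi(X_T^{t,y,\eta})]$ associated with the SDE \eqref{Eq:ArtifDyna1}. Since $\eta$ is fixed, the coefficients $V_i(s,\cdot,\la\eta_s,\varphi_i\ra)$ are smooth in the space variable with bounded derivatives (uniformly in $s\in[0,T]$), and \textbf{(LB)} ensures that $VV^T$ remains uniformly elliptic. Hence classical parabolic theory under uniform ellipticity with smooth coefficients and bounded Lipschitz terminal data (see Friedman \cite{friedman_partial_2008}) yields existence and uniqueness of a classical solution to \eqref{PDE} in $C^{1,2}([0,T)\times \R^d)$, and Feynman--Kac identifies it with $\psi$.

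For the derivative bound I would use a Bismut--Elworthy--Li style integration by parts on the Wiener space. Let $J_{t,s}^y$ denote the Jacobian flow of $y\mapsto X_s^{t,y,\eta}$, which is bounded in every $L^p$ uniformly in $s\in[t,T]$ by standard SDE estimates, and let $\gamma_{t,T}^y$ be the associated Malliavin covariance matrix. Uniform ellipticity together with the boundedness of the spatial derivatives of the coefficients yields, by a standard argument, $\E[(\det \gamma_{t,T}^y)^{-p}] \leq C (T-t)^{-dp}$ for every $p\geq 1$. Differentiating once in space gives directly
\[
D_j \psi(t,y) \;=\; \E\bigl[\nabla\phi(X_T^{t,y,\eta})\cdot J_{t,T}^y e_j\bigr],
\]
which is bounded by $C\|\phi\|_{1,\infty}$ and handles the case $\|\beta\|=1$. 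Each additional spatial differentiation is then absorbed into one extra integration by parts, producing a Malliavin weight with $L^2$-norm of order $(T-t)^{-1/2}$; iterating shows that for a pure spatial multi-index $\alpha$ of length $n$,
\[
|D_\alpha \psi(t,y)| \;\leq\; C\|\phi\|_{1,\infty} (T-t)^{-(n-1)/2},
\]
which is exactly the target bound when $\|\alpha\|=n$. For a general multi-index $\beta$ containing zero (time) entries, the PDE itself serves to convert them: $\p_t \psi = -\mc{L}^\eta \psi$ trades one time derivative for at most two spatial derivatives (modulo the smooth bounded coefficients of $\mc{L}^\eta$), and iterating this substitution reproduces the weighting $\|\beta\|=|\beta|+|\beta|_0$ in the exponent.

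The main obstacle is verifying the Malliavin calculus estimate on $\gamma_{t,T}^y$ in our time-inhomogeneous setting. Since the coefficients are smooth in space with bounded derivatives and uniformly elliptic in time (uniformly in the bounded parameter $\la\eta_s,\varphi_i\ra$), this follows by a direct adaptation of the standard Norris-type arguments. A purely analytic alternative, sidestepping Malliavin calculus altogether, is to regularize $\phi$ by mollification at scale $\sqrt{T-t}$, apply the smooth-case estimate \eqref{Eq:Bound_derivatives_linear_u} of Lemma \ref{PropPDE} to the regularized equation, and pass to the limit using the maximum principle together with the Gaussian upper bounds on the fundamental solution furnished by uniform ellipticity (again \cite{friedman_partial_2008}). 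Either route uses only ingredients already invoked elsewhere in the paper.
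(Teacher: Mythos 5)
Your proposal is correct, but it follows a genuinely different route from the paper. The paper's entire proof is a one-line appeal to classical parabolic theory: since the law enters the coefficients of $\mL^\eta$ only as a (time-dependent) parameter, \eqref{PDE} is a linear, uniformly elliptic parabolic equation with $C_b^\infty$-in-space coefficients and bounded Lipschitz terminal data, and the existence, uniqueness and singular gradient bounds \eqref{Eq:Bound_derivatives_linear_Lip_u} are quoted from Friedman (Chapter 9, Section 3, equations with a parameter), i.e. from fundamental-solution estimates. Your main argument instead is probabilistic: Feynman--Kac plus a Bismut--Elworthy--Li/Malliavin integration by parts, using the Lipschitz regularity of $\phi$ to get the non-singular first-derivative bound via the Jacobian flow, paying $(T-t)^{-1/2}$ per extra spatial derivative through the covariance-matrix moment bound, and converting time derivatives through the PDE, which correctly reproduces the $\|\beta\|=|\beta|+|\beta|_0$ weighting. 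This buys an explicit treatment of the time-inhomogeneity (the point the authors themselves flag as the obstruction to using Kusuoka--Stroock/UFG results) at the price of verifying the Malliavin covariance estimate, which under uniform ellipticity is indeed standard and does not even require a Norris-type argument. Two small caveats: the identity $D_j\psi(t,y)=\E[\nabla\phi(X_T^{t,y,\eta})\cdot J_{t,T}^y e_j]$ needs a mollification step since $\phi$ is only Lipschitz (harmless), and in your analytic alternative the passage to the limit cannot rest on the maximum principle alone, since that controls sup norms but not derivatives; one either needs Gaussian bounds on the \emph{derivatives} of the fundamental solution (at which point you are back to the paper's citation) or a semigroup smoothing estimate of the form $\|D^n P^\eta_{t,T}g\|_\infty\leq C(T-t)^{-n/2}\|g\|_\infty$ applied to $\phi-\phi_\epsilon$ with $\epsilon=\sqrt{T-t}$.
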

\begin{proof}
This follows from classical results of parabolic equations with parameter, see Chapter 9, Section 3  of \cite{friedman_partial_2008}.\\
\end{proof}

Thanks to the uniform ellipticity assumption, even if the terminal condition is not differentiable, we know that the solution of the PDE \eqref{PDEfirst} is smooth except at the boundary. Precisely, the gradient bounds \eqref{Eq:GradBoundsmooth} are now given by
\begin{equation}\label{smoothlip}
||\nabla_y^n\psi(t,\cdot)||_{\infty} \leq C(T,V_{0:d})||\phi||_{1,\infty} (T-t)^{(1-n)/2},
\end{equation}
where $\psi$ is defined in \eqref{Eq:DefPsi}. With this in hand, we can follow the proof exactly as the one of the corresponding forward part in Theorem \ref{MR} up to estimate \eqref{Eq:AlmostFinalErrorA} but where we separate the error on the last step, since there is no smoothing effect there. Then, plugging estimate \eqref{smoothlip} in \eqref{Eq:AlmostFinalErrorA} instead of \eqref{Eq:GradBoundsmooth}, we get:
\begin{eqnarray*}
&&\left|(P_{T_0,T_N} - Q_{T_0,T_N}^{\hmu}) \phi(x)\right|  \\
&& \leq C(T,V_{0:d}) ||\phi||_{1,\infty} \sum_{j=0}^{N-2}\sum_{p=0}^{q-1} \Delta_{T_{j+1}}^{p+1}(T-T_{j+1})^{-1/2} \sum_{i=0}^{d} \left|(P_{T_0,T_N} - Q_{T_0,T_N}^{\hmu}) \varphi_i(x)\right| \notag\\
&& \quad + C(T,V_{0:d},d) ||\phi||_{1,\infty} \|\varphi\|_{2q,\infty}\sum_{j=0}^{N-2}\Delta_{T_{j+1}}^{q+1} (T-T_{j+1})^{-1/2} \notag\\
&& \quad +  C(V_{0:d},d,m) ||\phi||_{1,\infty} \sum_{j=0}^{N-2}\sum_{l=m+1}^{m+2}\Delta_{T_{j+1}}^{\frac{l}{2}}(T-T_{j+1})^{(1-l)/2} +\left|(P_{T_{0},T_N} - P^{\eta}_{T_{0},T_N})\phi(x)\right|.\notag
\end{eqnarray*}
We conclude the proof by using Lemma \ref{c5} on the sums and by combining Lipschitz property of $\phi$ and adapted time-step on the last step error.\qed\\

\textbf{Proof of the backward approximation in Corollary \ref{Lipcase}}\\

Just as in the forward case, our analysis relies on the regularization properties of the associated non-linear PDE under \textbf{(LB)}. We have
\begin{claim}\label{EllipticNonLinearPDE}
Under \textbf{(LB)}, there exists a unique solution $u$ of \eqref{Eq:AssociatedPDE}, for all $(t,y)\in [0,T]\times \R^d$, it is given by
$$u(t,y)=Y_t^{t,y,\mu},$$ 
where $Y_t^{t,y,\mu}$ is defined in \eqref{cFBSDE}. Moreover, for $\phi$ Lipschitz and bounded, for every multi-index $\beta\in \mc{M}$ there exists a positive constant $C$ depending on the regularity of $V_{0:d},f,\varphi$ and $T$ such that:
\begin{equation}
|| D_\beta u(t,\cdot) ||_\infty  \leq C ||\phi||^{|\beta|}_{1,\infty} (T-t)^{-(||\beta||-1)/2}
\end{equation}
\end{claim}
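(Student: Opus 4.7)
The plan is to proceed in three stages: first establish the existence, uniqueness and probabilistic representation, then derive the derivative bounds by an approximation-and-smoothing argument, and finally verify the specific power structure on $\|\phi\|_{1,\infty}$.

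For the first stage, since $\mu$ is a fixed family of probability measures, the conditional system \eqref{cFBSDE} reduces to a classical Markovian time-inhomogeneous decoupled FBSDE with Lipschitz coefficients. Therefore the results of Pardoux-Peng apply: there exists a unique solution $(X^{t,y,\mu},Y^{t,y,\mu},Z^{t,y,\mu})$, the map $u(t,y):= Y_t^{t,y,\mu}$ is continuous and, by standard Feynman-Kac arguments, it is a viscosity solution of \eqref{Eq:AssociatedPDE}. Uniqueness in the viscosity sense among bounded continuous functions follows from the comparison principle available under \textbf{(LB)} thanks to uniform ellipticity.

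For the regularity bounds I would use an approximation argument. Let $(\phi_\varepsilon)_{\varepsilon>0}$ be a family of $C_b^\infty$ mollifications of $\phi$ with $\|\phi_\varepsilon\|_{1,\infty}\le C\|\phi\|_{1,\infty}$ and $\phi_\varepsilon\to \phi$ locally uniformly. Let $u_\varepsilon$ be the solution of \eqref{Eq:AssociatedPDE} with boundary datum $\phi_\varepsilon$; by Lemma \ref{RegNonLinearPDE} each $u_\varepsilon$ is $C_b^\infty$. The goal is to derive estimates on $D_\beta u_\varepsilon$ that are uniform in $\varepsilon$ and depend only on $\|\phi\|_{1,\infty}$, and then pass to the limit. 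Writing $u_\varepsilon$ in its mild form
\begin{equation*}
u_\varepsilon(t,y)\ =\ P^\mu_{t,T}\phi_\varepsilon(y)+\int_t^T P^\mu_{t,s}\,f\bigl(s,\cdot,u_\varepsilon(s,\cdot),(\mc{V}^\mu u_\varepsilon(s,\cdot))^T,\la\mu_s,\varphi_f(\cdot,u_\varepsilon(s,\cdot))\ra\bigr)(y)\,ds,
\end{equation*}
the uniform ellipticity in \textbf{(LB)} yields the classical elliptic smoothing estimate $\|D_\beta P^\mu_{t,s} g\|_\infty \leq C(s-t)^{-|\beta|/2}\|g\|_\infty$, and the Lipschitz property of $\phi_\varepsilon$ allows one derivative to be absorbed, so that the first term is controlled by $C\|\phi\|_{1,\infty}(T-t)^{-(\|\beta\|-1)/2}$. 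The source term is then handled by induction on $\|\beta\|$: differentiating $f$ in $y$ brings products of lower-order derivatives of $u_\varepsilon$ (via the chain rule acting on $y\mapsto u_\varepsilon(s,y)$ and $y\mapsto \mc{V}^\mu u_\varepsilon(s,y)$) bounded by the quantities $M_{u_\varepsilon}(r,s)$ defined in \eqref{Eq:DefinitionM}, while integrability of the singularity $(s-t)^{-|\beta|/2}$ is preserved by Gronwall-type arguments on the weighted norm $\sup_{t<T}(T-t)^{(\|\beta\|-1)/2}\|D_\beta u_\varepsilon(t,\cdot)\|_\infty$.

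The main obstacle is the bookkeeping needed to produce exactly the power $\|\phi\|^{|\beta|}_{1,\infty}$. The mechanism is that every differentiation that falls on $u_\varepsilon$ inside the driver $f$ contributes one additional factor of a lower-order derivative bound, which inductively carries one factor of $\|\phi\|_{1,\infty}$. Since a multi-index $\beta$ of spatial length $|\beta|$ can generate at most $|\beta|$ such factors when the chain rule is iterated (the mixed time/space structure of $\|\beta\|$ only contributes to the temporal singularity and not to the polynomial dependence on $\|\phi\|_{1,\infty}$), a careful induction on $|\beta|$ closes the bootstrap and gives the announced estimate $\|D_\beta u(t,\cdot)\|_\infty\le C\|\phi\|_{1,\infty}^{|\beta|}(T-t)^{-(\|\beta\|-1)/2}$. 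Finally, standard stability of viscosity solutions lets one pass to the limit $\varepsilon\to 0$ and transfer the uniform estimates to $u$ itself.
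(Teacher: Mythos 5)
Your proposal is essentially correct but follows a genuinely different route from the paper. The paper's proof is probabilistic: it repeats the argument of Lemma \ref{RegNonLinearPDE}, i.e.\ it iterates the Pardoux--Peng differentiation procedure on the conditional BSDE along the lines of Crisan and Delarue \cite{crisan_sharp_2012}, and under the uniform ellipticity of \textbf{(LB)} it invokes the integration-by-parts (gradient representation) property of Ma and Zhang \cite{ma_representation_2002} to produce the singular factors $(T-t)^{-(\|\beta\|-1)/2}$ for the space derivatives, the time derivatives then being recovered through the PDE; this reuses the machinery of the smooth case and directly yields the representation $u(t,y)=Y_t^{t,y,\mu}$. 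You instead work at the PDE level: mollify $\phi$, write the Duhamel (mild) formulation with the linear semigroup $P^\mu$, use the parabolic smoothing estimates available under \textbf{(LB)} (Friedman \cite{friedman_partial_2008}), run an induction on the order of the derivative with time-weighted norms and Gronwall, and pass to the limit; this has the advantage of needing only classical heat-kernel estimates for the (time-inhomogeneous, law-parametrized) linear operator, and the non-local term $\la\mu_s,\varphi_f(\cdot,u(s,\cdot))\ra$ causes no trouble in the bootstrap since it is independent of $y$. Two points in your sketch need tightening to make the argument airtight: (i) in the Duhamel bootstrap you cannot place all $|\beta|$ derivatives on the semigroup, since $\int_t^T(s-t)^{-|\beta|/2}ds$ diverges for $|\beta|\ge 2$; the standard fix is to differentiate the equation satisfied by the lower-order derivatives (or leave all but one derivative on the nonlinearity, already controlled by the induction hypothesis with weight $(T-s)^{-(\cdot)/2}$) and use the convolution estimate $\int_t^T(s-t)^{-a}(T-s)^{-b}ds\le C(T-t)^{1-a-b}$, $a,b<1$; (ii) uniqueness cannot be deduced from a bare comparison principle because \eqref{Eq:AssociatedPDE} is non-local in $u$ through the McKean term; one should instead compare two solutions via the Feynman--Kac representation and Gronwall, exploiting the Lipschitz continuity of $\varphi_f$ (or, as the paper does, identify $u$ with the decoupling field of the conditional BSDE, whose uniqueness is classical). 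Both are routine repairs and do not affect the validity of your overall strategy.
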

\begin{proof}
To prove Claim \ref{EllipticNonLinearPDE}, we follow the same arguments given for Lemma \ref{RegNonLinearPDE}. First, due to the regularity properties of the diffusion under the elliptic case, we have similar properties as those used in the paper of Crisan and Delarue \cite{crisan_sharp_2012}, even in the non-homogeneous case (notably, the integration by parts property as shown in \cite{ma_representation_2002}). Hence, we get the control on derivatives result for space derivatives, and extend it, as before, to time derivatives. 
\end{proof}

Armed with the regularity of the function $u$, we can repeat the proof of the backward approximation in Theorem \eqref{MR}. We recover \eqref{Eq:propa1} for the first order scheme  and \eqref{Eq:propa2} for the second order scheme, i.e.
\begin{align}
&\mE^1_u(k)  \leq  \left(1+C\Delta_{T_{k+1}}\right)\mE^1_u(k+1) + \bar{\epsilon}(k+1),\label{Eq:propa1b}\\
&\mE_u^2(k) + \Delta_{T_{k+1}} \mE_f(k)  \leq  (1+C\Delta_{T_{k+1}}) \left[\mc{E}^2_u(k+1) + \Delta_{T_k}\mc{E}_f (k+1)\right] +C' ( \bar{\epsilon}_2(k) + \Delta_{T_{k+1}}N^{-[(m-1)\wedge q]/2}), \label{Eq:propa2b}
\end{align}
where $\bar{\epsilon}, \bar{\epsilon}_2$ are respectively defined in \eqref{Eq:barepsilon}, \eqref{Eq:DefEpsBar2}. Now, if we show that
\begin{equation}
 \sum_{k=0}^{N-1} \bar{\epsilon}(k) \leq N^{-1} ; \quad \text{and} \quad 
 \sum_{k=0}^{N-2} \bar{\epsilon}_2(k)  \leq N^{-2}; \label{Ineq:lipschitz}
\end{equation}
then, as in the smooth setting, we can apply Gronwall lemma on \eqref{Eq:propa1b} and \eqref{Eq:propa2b} and conclude on the desired rates of convergence for the approximation of $u$. The arguments for the rate of the approximation of $v$ are exactly as in the smooth setting thus completing the proof of the claimed result.

Therefore, we only need to prove \eqref{Ineq:lipschitz}.  But, Claim \ref{EllipticNonLinearPDE} and the definition of $M_u$ given in \eqref{Eq:DefinitionM} imply
$$M_u(n,T_{k+1}) \leq (T-T_{k+1})^{(1-n)/2}||\phi||_{1,\infty}^n,$$
which together with Claim \ref{EllipticNonLinearPDE} show that under the Lipschitz boundary setup,  

\begin{eqnarray}
\bar{\epsilon}(k+1) &\leq &  C\bigg( (T-T_{k})^{-1/2}\Delta_{T_{k+1}} \left[  \Delta_{T_{k+1}}^{3} + \Delta_{T_{k+1}}^q+N^{- [ (m-1) \wedge 2q]/2} \right]\notag\\
&&\quad + (T-T_{k})^{-m/2} \Delta_{T_{k+1}}^{(m+1)/2} + (T-T_{k})^{-(m+1)/2}\Delta_{T_{k+1}}^{(m+2)/2}\notag\\
&&\quad +  (T-T_{k})^{-1}\Delta_{T_{k+1}}^{3/2} \left[\Delta_{T_{k+1}}^{1/2} + \Delta_{T_{k+1}}^{q-1/2}+ \Delta_{T_{k+1}}^{1/2} N^{- [ (m-1) \wedge 2q]/2} \right]\bigg).\notag\\
&\leq&  C\bigg( (T-T_{k})^{-1/2}\Delta_{T_{k+1}} N^{-1} + (T-T_{k})^{-m/2} \Delta_{T_{k+1}}^{(m+1)/2} \label{Ineq:epsilon_t_lips}\\
&& \quad + (T-T_{k})^{-(m+1)/2}\Delta_{T_{k+1}}^{(m+2)/2}+  (T-T_{k})^{-1}\Delta_{T_{k+1}}^{3/2} N^{-1/2}\bigg);\notag
\end{eqnarray}
where we have used the fact that $\Delta_{T_k}\leq CN^{-1}$ even on the decreasing discretization. We can proceed similarly for $\bar{\epsilon}_2$ from inequality \eqref{Inq:eps2k} , to get

\begin{align}
\bar{\epsilon}_2(k) \leq & C'' \bigg( (T-T_{k})^{-3/2} \Delta^3_{T_{k+1}}  +  (T-T_{k})^{-3/2} \left[ \Delta_{T_{k+1}}^{q+1}+ \Delta_{T_{k+1}}^2 N^{- [ (m-1) \wedge 2q]/2} \right] \label{Ineq:epsilon_t_lips2}\\
 &\quad + \sum_{i=m-2}^{m+3} (T-T_{k})^{-(i-1)/2} \Delta_{T_{k+1}}^{(i+1)/2} \bigg).\notag
\end{align}

Then, \eqref{Ineq:lipschitz} follows by applying Lemma \ref{c5} to \eqref{Ineq:epsilon_t_lips} and \eqref{Ineq:epsilon_t_lips2}
\qed
\subsection{Proof of Corollary \ref{wasscase_backward}}
On a first hand, by following the proof of \eqref{Eq:ErrorforSB} in Theorem \ref{MR} we get \eqref{forwass}, where the difference between the integral of the $\varphi_i,\ i=1,\ldots,d$ against the measures in the right hand side are replaced by the distance $d_\mF$. Since for all $T_k<t<T$ $d_\mF(\mu_t,\mu_{T_k})  \leq C(t-T_k)$ (resp. $(t-T_k)^{1/2}$) in the case \eqref{wasserror1} (resp. \eqref{wasserror2}), the result follows from Gronwall's Lemma. This gives the rate of approximation of the law of the forward process. 

On a second hand, the backward errors are then obtained by the same arguments already developed in the proof of \eqref{Eq:rate1} in Theorem \ref{MR}, using the new forward approximation \eqref{wasserror1} (resp. \eqref{wasserror2}) instead of \eqref{Eq:ErrorforSB} (resp. \eqref{Eq:ErrorforLB}) in the proofs of Claims \ref{Cl:back1-2} and \ref{Cl:back1-3}.

\begin{center}
\textbf{ Acknowledgements}
\end{center}

The authors would like to thank the CNRS/Royal Society project `Promising" for partial financial support during the preparation of this work. We thank as well the members of the project for the discussions on the treated topics. Finally, we would like to thank François Delarue for his valuable suggestions and careful reading of the paper.

\bibliographystyle{amsalpha}

\bibliography{Bib_MV-FBSDE}

\end{document}